\newtheorem{theorem}{Theorem}[section]
\newtheorem{lemma}[theorem]{Lemma}
\newtheorem{proposition}[theorem]{Proposition}
\newtheorem{corollary}[theorem]{Corollary}
\theoremstyle{definition}
\newtheorem{example}[theorem]{Example}
\newtheorem{remark}[theorem]{Remark}
\newcommand{\GL} {\mathrm{GL}}
\def\QQ {{\mathbb Q}}     
\begin{document}

	\title[Tensor products of Lie nilpotent associative algebras]
	{Tensor products of Lie nilpotent associative algebras and applications to codimension sequences}	
\author[Elitza Hristova]
{Elitza Hristova}
\address{Institute of Mathematics and Informatics,
	Bulgarian Academy of Sciences,
	Acad. G. Bonchev Str., Block 8,
	1113 Sofia, Bulgaria}
\email{e.hristova@math.bas.bg}

\thanks{Partially supported by Grant KP-06-N92/2 of the Bulgarian National Science Fund.}

\subjclass[2020]{16R10}

\keywords{Lie nilpotent associative algebras, Grassmann algebras, product of commutators, $S_n$-module structure, codimension sequence.}

	\begin{abstract}
	Let $G$ and $H$ be Lie nilpotent associative algebras over a field $K$ such that in addition $H$ satisfies the identity $[x_1, x_2] \cdots [x_{2k-1}, x_{2k}]=0$ for some $k \geq 2$. In this paper, extending results of Deryabina and Krasilnikov, we show that the tensor product $G \otimes H$ is again a Lie nilpotent associative algebra. Moreover, we give a lower and an upper bound on the minimal value of $q$ for which $[x_1, \dots, x_{q+1}] = 0$ is an identity for $G\otimes H$. In the case when $H$ satisfies the identities $[x_1, x_2, x_3] = 0$ and $[x_1, x_2][x_3, x_4] = 0$ and $\operatorname{char}K \neq 3$, we determine a better upper bound for $q$, which in many cases is equal to the minimal index of Lie nilpotency for $G\otimes H$. As a corollary, we reprove a result of Drensky saying that any product of Grassmann algebras of the form $E\otimes E_{i_1}\otimes \cdots \otimes E_{i_s}$ or $E_{j_1} \otimes E_{j_2} \otimes \cdots \otimes E_{j_t}$, where $E$ denotes the Grassmann algebra over a countable dimensional vector space and $E_r$ denotes the Grassmann algebra over an $r$-dimensional vector space, satisfies an identity of the form $[x_1, \dots, x_{q+1}] = 0$. We also provide several particular cases in which the minimal value of $q$ can be explicitly computed. As an application, we consider a field of characteristic zero, the variety $\mathfrak{N}_p$ of Lie nilpotent associative algebras of index at most $p$ and the corresponding relatively free algebras of finite rank, $F_n(\mathfrak{N}_p)$. We exhibit many explicit irreducible $S_n$-modules in the $S_n$-module decomposition of the space of proper multilinear polynomials of degree $n$ in $F_n(\mathfrak{N}_p)$ for any $p$. This gives a lower bound for the dimensions of the spaces of multilinear and proper multilinear polynomials of degree $n$ in $F_n(\mathfrak{N}_p)$.
	\end{abstract}

	\maketitle
	\section{Introduction}
	
	Let $K$ be a field and let $K \left \langle X \right\rangle$ denote the free unital associative algebra generated by the countably infinite set $X = \{x_1, \dots, x_n, \dots\}$ over $K$. We say that an associative unital $K$-algebra $A$ satisfies a polynomial identity if there is a polynomial $f(x_1, \dots, x_m) \in K \left\langle X \right\rangle$ such that $f(a_1, \dots, a_m) = 0$ for all $a_1, \dots, a_m \in A$. If $A$ satisfies a polynomial identity then $A$ is called an algebra with polynomial identity, or simply a PI-algebra. We also define left-normed long commutators in $K \left \langle X \right\rangle$ recursively in the following way: $[u_1, u_2] = u_1u_2 - u_2u_1$ and $[u_1, \dots, u_{p+1}] = [[u_1, \dots, u_p], u_{p+1}]$ for any $p \geq 2$ and $u_1, \dots, u_{p+1} \in K \left \langle X \right\rangle$. A unital associative algebra $A$ is called Lie nilpotent of index at most $p$ if $A$ satisfies the identity $[x_1. \dots, x_{p+1}] = 0$. We will also say that $A$ is of index $p$ if $A$ satisfies $[x_1. \dots, x_{p+1}] = 0$ and does not satisfy $[x_1. \dots, x_p] = 0$.

	Let $E$ denote the Grassmann (or exterior) algebra over a countable dimensional $K$-vector space and let $E_r$ denote the Grassmann algebra over an $r$-dimensional $K$-vector space, where $r \geq 2$ and $\operatorname{char} K \neq 2$. It is well-known that both $E$ and $E_r$ are Lie nilpotent algebras and satisfy the identity $[x_1, x_2, x_3] = 0$. In his paper \cite{D}, Drensky shows that moreover any product of Grassmann algebras of the form $E\otimes E_{i_1}\otimes \cdots \otimes E_{i_s}$ or $E_{j_1} \otimes E_{j_2} \otimes \cdots \otimes E_{j_t}$ satisfies an identity of the form $[x_1, \dots, x_{q+1}]=0$ for some integer $q \geq 2$. There are several cases, in which an explicit value of $q$ is known. For example, it is well-known that $E_2 \otimes E_2$ satisfies the identity $[x_1, x_2, x_3, x_4] = 0$ and that $E_2 \otimes E_2 \otimes E_2$ satisfies the identity $[x_1, \dots, x_5] = 0$. Furthermore, in \cite{DeK}, the authors show that the algebras $E \otimes E_{2k}$ and $E \otimes E_{2k+1}$ satisfy the identity $[x_1, \dots, x_{2k+3}]=0$. This is a corollary of a more general result that they prove in the same paper, namely that if $G$ and $H$ are unital associative algebras satisfying $[x_1, x_2, x_3] = 0$ and such that in addition $H$ satisfies $[x_1, x_2]\cdots [x_{2k-1}, x_{2k}]=0$ for some $k \geq 2$, then $G\otimes H$ satisfies $[x_1, \dots, x_{2k+1}]=0$. This result is valid over any field $K$. Notice that the condition on $H$ to satisfy the identity $[x_1, x_2]\cdots [x_{2k-1}, x_{2k}]=0$ is important, since for example the algebra $E\otimes E$ is not Lie nilpotent.   
	
	In the first part of this paper, we extend the above statement. We show that if $G$ and $H$ are Lie nilpotent associative algebras of arbitrary indices such that in addition $H$ satisfies $[x_1, x_2]\cdots [x_{2k-1}, x_{2k}]=0$ for some $k \geq 2$, then the tensor product $G\otimes H$ is again Lie nilpotent. We also give a lower and an upper bound for the index of Lie nilpotency of $G\otimes H$. In the case when $H$ satisfies $[x_1, x_2, x_3]$ and $[x_1, x_2][x_3, x_4]$, we determine a better upper bound on the index of Lie nilpotency for $G\otimes H$, under the requirement that $\operatorname{char}K \neq 3$. These results are given in Theorem \ref{thm_tensorProduct}  and Proposition \ref{prop_addE2}. Since every finitely generated Lie nilpotent associative algebra statisfies an identity of the form $[x_1, x_2]\cdots [x_{2k-1}, x_{2k}]=0$ (see e.g. \cite{J} or \cite{MPR}), an immediate corollary of Theorem \ref{thm_tensorProduct} is that if $G$ is any Lie nilpotent associative algebra and $H$ is finitely generated Lie nilpotent then $G\otimes H$ is again Lie nilpotent.  
	As another application of Theorem \ref{thm_tensorProduct}, we obtain a new proof of Drensky's result on the tensor products of Grassmann algebras.  
	The advantage of this approach is that we are able to provide several examples in which the minimal value of $q$ for which $[x_1, \dots, x_{q+1}]=0$ is an identity is explicitly determined. We also show that when we have a product of the form $E\otimes E_{i_1}\otimes \cdots \otimes E_{i_s}$, then the minimal value of $q$ is necessarily an even integer.
	
	In the second part of the paper, we take $K$ to be a field of characteristic $0$. By $\mathfrak{N}_p$ we denote the variety of all Lie nilpotent associative algebras of index at most $p$. Let $X_n = \{x_1, \dots, x_n\} \subset X$ and let $K\left\langle X_n \right\rangle$ denote the free associative algebra generated by $X_n$. By $P_n$ we denote the subspace of multilinear polynomials of degree $n$ in $K\left\langle X_n \right\rangle$ and by $\Gamma_n$ we denote the subspace of proper multilinear polynomials of degree $n$. The spaces $P_n$ and $\Gamma_n$ are naturally modules over the symmetric group $S_n$, where the action of the symmetric group is given by permutation of the variables. One of the main questions in the quantitative study of PI-algebras is to determine the $S_n$-cocharacter and codimension sequence of a given PI-algebra. In this paper, we are interested in the relatively free algebra of rank $n$ in the variety $\mathfrak{N}_p$. This algebra is defined as the quotient $F_n(\mathfrak{N}_p) = K\left\langle X_n \right\rangle / (I_{p+1} \cap K \left\langle X_n \right\rangle)$, where $I_{p+1}$ is the two-sided associative ideal in $K\left\langle X \right\rangle$ generated by all commutators of length $p+1$. The subspaces of multilinear and proper multilinear polynomials in $F_n(\mathfrak{N}_p)$, denoted respectively by $P_n(\mathfrak{N}_p)$ and $\Gamma_n(\mathfrak{N}_p)$, inherit the $S_n$-module structure of $P_n$ and $\Gamma_n$. The $S_n$-module structure of $\Gamma_n(\mathfrak{N}_p)$ is known for $p=1$, $2$, $3$, and $4$ (see \cite{V} for $p=3$ and \cite{SV} for $p=4$). In addition, for general values of $p$, we have the following special case of a theorem of Berele.
	\begin{theorem} [\cite{B}] \label{thm_Berele}	
		Let $M(\lambda)$ denote the irreducible $S_n$-module corresponding to the partition $\lambda$. Let
		$\Gamma_n(\mathfrak{N}_p) =  \bigoplus_{\lambda} m_{\lambda} M(\lambda).
		$	
		Then there exist integers $s_1$ and $s_2$ such that the non-zero multiplicities $m_{\lambda}$ are supported in diagrams of the form
		\begin{center}
			\begin{tikzpicture}
				\draw (0,0.5)--(2.5,0.5);
				\draw (2.5, -0.6) -- (2.5, -0.6);

				\draw (2.5, 0.5) -- (2.5, -1);
				\draw (0, -1) -- (2.5, -1);
				
				\draw[<->] (2, 0.5) -- (2, -1);
				\draw (2,-0.2) node[right] {{$s_1$}};
				
				\draw[<->] (0, -0.5) -- (2.5, -0.5);
				\draw (1.5,-0.5) node[below] {{$s_2$}};
				
				\draw (0, 0.5)--(0,-2.4);
				\draw (0.7,-1)--(0.7,-2.4);
			
			\draw [dashed] (0, -2.4)--(0,-3);
			\draw [dashed] (0.7,-2.4)--(0.7,-3);
			
				\draw[<->] (0, -2.4) -- (0.7, -2.4);
				\draw (0.3,-2.4) node[below] {{$1$}};
				
			\end{tikzpicture}
		\end{center}
Moreover, only the first column in the Young diagram of $\lambda$ can grow arbitrarily large when $n$ grows.
\end{theorem}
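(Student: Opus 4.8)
The plan is to reduce everything to the proper component and then combine the general structure theory of $T$-ideals in characteristic zero with the special features of commutator polynomials. Recall that $\Gamma_n(\mathfrak{N}_p)$ is spanned by products of left-normed commutators $[x_{i_1},\dots,x_{i_d}]$ whose lengths satisfy $2\le d\le p$, since any commutator of length $>p$ vanishes by the defining identity $[x_1,\dots,x_{p+1}]=0$. In representation-theoretic terms the target statement has three ingredients: the first column $\lambda'_1=\ell(\lambda)$ may grow with $n$; the number of rows of length $\ge 2$ is bounded (this is the parameter $s_1$, i.e. $\lambda'_2\le s_1$); and the width $\lambda_1$ is bounded (this is $s_2$). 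So I would aim to prove the two boundedness statements, the growth of the first column being exactly what is left unconstrained.

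For the existence of the bounds I would invoke Kemer's theorem: over a field of characteristic $0$ the proper variety $\mathfrak{N}_p$ is generated by the Grassmann envelope $E(B)=B_0\otimes E^{(0)}\oplus B_1\otimes E^{(1)}$ of a finite-dimensional $\mathbb{Z}_2$-graded algebra $B=B_0\oplus B_1$, where $E^{(0)},E^{(1)}$ are the even and odd parts of $E$. By the Berele--Regev description of the cocharacters of such an envelope through hook Schur functions, the whole cocharacter sequence of $E(B)$ is supported inside a hook $H(k,l)=\{\lambda:\lambda_{k+1}\le l\}$ with $k=\dim B_0$ and $l=\dim B_1$. This confines every $\lambda$ with $m_\lambda\neq 0$ to a region with at most $k$ long rows and a tail of bounded width, and so furnishes the finite constants; the remaining task is to sharpen this hook, using the passage to proper polynomials, into the much narrower shape in the statement.

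The sharpening is where the real work lies. In a proper polynomial every variable occurs inside a commutator, and each commutator of length $d\le p$ carries a symmetry type supported on a partition of size at most $p$, i.e. on the degree-$d$ piece $\mathrm{Lie}_d$ of the free Lie algebra. Any long row of $\lambda$---that is, symmetric behaviour in many variables---can therefore only be produced by symmetrizing across a product of several commutators. The key point I would need to establish is that the associative relations among products of commutators force massive cancellation once too many variables are symmetrized, so that both $\lambda_1$ and $\lambda'_2$ stay bounded uniformly in $n$, while the sole mechanism capable of pushing a diagram arbitrarily far down is the iterated antisymmetrization built into the commutators, which contributes a single column in the Grassmann direction.

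This last step is the main obstacle, and it is genuinely global. The naive associated-graded model $S\!\left(\bigoplus_{d=2}^p \mathrm{Lie}_d\right)$ of the proper polynomials already produces partitions of unbounded width---for instance, for $p=2$ the symmetric algebra on $\mathrm{Lie}_2$ contains every rectangle $(m,m)$---so the width bound $s_2$ cannot be read off from the Lie/PBW structure and must come from the associative identities that are invisible at that level, precisely the relations making $[x_1,x_2][x_3,x_4]$ totally antisymmetric in $E$. Controlling these relations uniformly in the degree, so as to cut the hook down to a bounded body with a single growing column, is exactly the content that the hook Schur function machinery of Berele is designed to supply.
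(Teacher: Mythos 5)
First, a point of context: the paper does not prove this theorem at all --- it is imported verbatim from Berele \cite{B} (with the later sharpening $s_2=p-1$ supplied by \cite{H}), so there is no internal proof to compare yours with; your attempt must stand on its own, and on its own terms it has a genuine gap.

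The gap is that your argument, as written, establishes only the hook bound and never bridges the distance from a hook to the shape in the statement. Your step 1 (Kemer representability plus the Berele--Regev hook theorem; in fact the Amitsur--Regev hook theorem suffices and needs no Kemer) places the ordinary cocharacters, and hence the proper ones --- since $m_\lambda(\chi_n)\ge m_\lambda(\xi_n)$ by the horizontal-strip relation between $\chi_n$ and the proper cocharacters $\xi_k$ --- inside a hook $H(k,l)$: up to $k$ rows and up to $l$ columns may grow with $n$. The theorem asserts vastly more: $\lambda_1\le s_2$ (\emph{no} growing rows), $\lambda'_2\le s_1$, and exactly one column free to grow. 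Nothing in your proposal derives this from the hook, and it cannot: even granting step 1 in full, hook Schur function machinery yields hooks, so you would conclude at best ``at most $\dim B_1$ growing columns and at most $\dim B_0$ growing rows,'' not ``one growing column and none growing rows.'' Your third paragraph --- that the associative relations among products of commutators ``force massive cancellation'' bounding $\lambda_1$ and $\lambda'_2$ --- is precisely the content of the theorem, and your final paragraph concedes you do not prove it, deferring instead to ``the hook Schur function machinery of Berele,'' which is circular: it appeals to the result being proved. To your credit, your diagnosis of where the difficulty lives is accurate and nontrivial: the associated-graded model $S\bigl(\bigoplus_{d=2}^{p}\mathrm{Lie}_d\bigr)$ really does contain every rectangle $(m,m)$ already for $p=2$, so the width bound is invisible at the PBW level and must come from relations such as the total antisymmetry of $[x_1,x_2][x_3,x_4]$ modulo $I_3$. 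But identifying the obstacle is not overcoming it; a complete proof requires Berele's actual analysis of eventual arm and leg widths in \cite{B} (or, for the row bound, an explicit argument as in \cite{H} giving $\lambda_1\le p-1$), and in particular some mechanism bounding the second column $\lambda'_2$ uniformly in $n$, of which your proposal contains none.
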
			
	
Furthermore, in \cite{H}, we show that $s_2 = p-1$, i.e., there can be at most $p-1$ boxes in the first row of the Young diagram of $\lambda$ from Theorem \ref{thm_Berele}.

In this paper, we find many explicit classes of partitions  $\lambda \vdash n$, such that $M(\lambda)$ appears with non-zero multiplicity in the $S_n$-module decomposition of $\Gamma_n(\mathfrak{N}_p)$ for arbitrary values of $p$. More precisely, let $p = 2k$ or $p=2k+1$. Then, for the following partitions $\lambda$, we have that $M(\lambda)$ appears in $\Gamma_n(\mathfrak{N}_p)$:
\begin{itemize}
	\item [(i)] $(p-1, 1)$, $(p-1, p-1)$;
	\item [(ii)] $(a+2, 2^b, 1^c)$ such that $a \geq 0$, $b+c >0$, and $a+b+1 \leq 2k-2$;
	\item [(iii)] $(l+3, l+1, 1^{n-2l-4})$, $(l+2, l+2, 1^{n-2l-4})$, and $(l+2, l+1, 1^{n-2l-3})$, where $1 \leq l \leq k-2$;
	\item [(iv)] If $n$ is even, then we also have the partition $(1, 1, \dots, 1)$.
\end{itemize}

In addition, if $p = 2k+1$ we have also the following partitions $\lambda \vdash n$ for $n \leq 4k$:
\begin{itemize}
\item [(v)] $(a+2, 2^b)$ for $a \geq 0$, $b> 0$, and $a+b+1 = 2k$ or $a+b+1 = 2k-1$;
\item [(vi)] $(a+2, 2^b, 1)$ for $a \geq 0$, $b \geq 0$, and $a+b+1 = 2k-1$.
\end{itemize}

These results are given in Section \ref{sec_Sn_structure}. More precisely, part (i) is proven in Section \ref{subsec_part1}, parts (ii), (iv), (v), and (vi) are given in Section \ref{subsec_E}, and part (iii) is proven in Section \ref{subsec_Gamma5}. In the same section, 
using these results, we give lower bounds for the dimensions of $\Gamma_n(\mathfrak{N}_p)$ and $P_n(\mathfrak{N}_p$). For $p = 2k$ or $p = 2k+1$ with $k \geq 3$ these lower bounds are given in Corollary \ref{coro_final_bound_Gamma} and Corollary \ref{coro_final_bound_codim}. For $k = 2$, see Corollary \ref{coro_dim_FromE}.
	
	\section{Tensor product theorems for Lie nilpotent associative algebras}
	
	Let $K$ be any field. Let $L_p$ denote the Lie ideal in $K \left \langle X \right\rangle$ generated by all commutators of length $p$. In other words, $L_p$ is defined inductively by the formula $L_p = [L_{p-1}, K\left\langle X \right\rangle]$, where $L_1 =K\left\langle X \right\rangle$. Let $I_p = K\left\langle X \right\rangle \cdot L_p$ denote the two-sided associative ideal in $K \left \langle X \right\rangle$ generated by all commutators of length $p$. Hence, $I_p$ is the T-ideal generated by the element $[x_1, \dots, x_{p}]$. 
	
	First we recall the following two theorems on products of commutators which will be very useful in the sequel. Theorem \ref{thm_prod} was first proven by Latyshev \cite{L} in 1965 and then was independently rediscovered by Gupta and Levin \cite{GL} in 1983. The statement holds over any field $K$. Theorem \ref{thm_prod_odd} was proven by many authors for different restrictions on the characteristic of $K$ (see \cite{DeK2} for a short account on the existing results). The most general result that we give below is due to Deryabina and Krasilnikov \cite{DeK2}.
	\begin{theorem}[\cite{L}, \cite{GL}] \label{thm_prod} 
		For any integers $m\geq 2$ and $n \geq 2$ it holds that
		\[
		I_m I_n \subset I_{m+n-2}.
		\]
	\end{theorem}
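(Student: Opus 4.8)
The plan is to reduce the statement to a product of ``pure'' commutators and then prove that by induction, with the genuine difficulty concentrated in a single cancellation step. First, recall that since $L_p$ is a Lie ideal one has $[L_p, K\langle X\rangle] = L_{p+1} \subseteq L_p$, so for a word $w$ and $\ell \in L_p$ the element $\ell w - w\ell = [\ell, w]$ again lies in $L_p$; hence $I_p = K\langle X\rangle L_p = L_p K\langle X\rangle$. Using this on both factors,
\[
I_m I_n = (K\langle X\rangle L_m)(L_n K\langle X\rangle) = K\langle X\rangle\,(L_m L_n)\,K\langle X\rangle .
\]
As $I_{m+n-2}$ is a two-sided ideal, it therefore suffices to prove the pure statement $L_m L_n \subseteq I_{m+n-2}$. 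Moreover $L_m$ is spanned by left-normed commutators in the generators, so I may assume $c = [a_1, \dots, a_m]$ and $d = [b_1, \dots, b_n]$ are such commutators and must show $cd \in I_{m+n-2}$.

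Next I would induct on the total length $m+n$. If $m = 2$ then $c \in L_2 \subseteq I_2$, and since $I_n$ is two-sided we get $cd \in I_n = I_{m+n-2}$; the case $n = 2$ is symmetric, so these serve as base cases. For the inductive step write $c = [c', a]$ with $c' = [a_1, \dots, a_{m-1}] \in L_{m-1}$ and $a = a_m$, and apply the associative Leibniz rule in the form
\[
cd = [c', a]\,d = [c'd,\, a] + c'\,[a, d].
\]
Here $[a, d]$ lies in $L_{n+1}$, so $c'[a, d] \in L_{m-1} L_{n+1}$, which is contained in $I_{(m-1)+(n+1)-2} = I_{m+n-2}$; to make this legitimate I would run, inside the induction on $m+n$, a secondary downward induction on $m$, so that the pair $(m-1, n+1)$ of the same total length is already available.

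The remaining term $[c'd, a]$ is the crux. By the induction hypothesis for the pair $(m-1, n)$ one has $c'd \in L_{m-1} L_n \subseteq I_{m+n-3}$, but commuting a \emph{general} element of $I_{m+n-3}$ with the generator $a$ only returns an element of $I_{m+n-3}$ --- one unit short of the target. The essential point, and the place where every naive rearrangement via the Leibniz and Jacobi identities turns out to be circular, is that this last unit must be gained from a genuine cancellation that uses the special form of $c'd$. The clean way to see it is to pass to the quotient $B = K\langle X\rangle / I_{m+n-2}$: there the image of every commutator of length $\geq m+n-3$ is central, because $[L_{m+n-3}, K\langle X\rangle] = L_{m+n-2} \subseteq I_{m+n-2}$. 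Exploiting this centrality to show that the image of $[c'd, a]$, and hence of $cd$, vanishes in $B$ is exactly the combinatorial identity that forms the technical heart of Latyshev's and Gupta--Levin's arguments, and it is the main obstacle in the whole proof. I would therefore concentrate the work on establishing this identity, verifying it first in the smallest genuinely nontrivial case $m = n = 3$ (where $B = K\langle X\rangle / I_4$ and the images of all length-$3$ commutators are central) and then promoting the computation to general $m, n$ by the double induction above.
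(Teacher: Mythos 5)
First, a point of context: the paper does not prove Theorem \ref{thm_prod} at all --- it is quoted from Latyshev \cite{L} and Gupta--Levin \cite{GL} --- so there is no in-paper proof to match your route against; your attempt has to stand on its own. The parts you do carry out are correct: $I_p = K\langle X\rangle L_p = L_p K\langle X\rangle$ (since $[\ell, w] \in L_{p+1} \subseteq L_p$), hence $I_m I_n = K\langle X\rangle\, L_m L_n\, K\langle X\rangle$ and it suffices to treat $cd$ with $c, d$ left-normed commutators; the base case $m=2$; and the identity $[c',a]d = [c'd,a] + c'[a,d]$, with the term $c'[a,d] \in L_{m-1}L_{n+1}$ handled by your secondary downward induction on $m$. (One slip: $L_m$ is \emph{not} spanned by left-normed commutators in the generators --- already $[x_1x_2, x_3]$ is not a linear combination of commutators of generators --- but it is spanned by left-normed commutators with entries in $K\langle X\rangle$, and your induction never uses that the entries are generators, so this is harmless.)

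The genuine gap is that the argument stops exactly where the theorem begins: the containment $[c'd, a] \in I_{m+n-2}$ is never established. You correctly observe that the induction hypothesis only yields $c'd \in I_{m+n-3}$ and that bracketing a general element of $I_{m+n-3}$ with $a$ gains nothing, and you then announce that you ``would concentrate the work on establishing'' the missing cancellation --- that is a plan, not a proof. Your centrality remark does not fill the hole: in $B = K\langle X\rangle/I_{m+n-2}$ only the image of $L_{m+n-3}$ is central, whereas $c'd$ is a \emph{product} lying in $I_{m+n-3}$, not an element of $L_{m+n-3}$; asserting that its image is central in $B$ is a reformulation of the very step to be proved, and, as you yourself note, expanding $[c'd,a]$ by Leibniz lands back in $L_m L_n$ and is circular. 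The missing ingredient is the concrete combinatorial input of \cite{L}, \cite{GL} --- swap identities of the type $[v,c][w,u] + [v,u][w,c] = -[c,vw,u] + v[c,w,u] + [c,v,u]w$, which the paper itself invokes in the proof of Remark \ref{rem_In}. That the gap is substantive, and not closable by soft means, is shown by two facts: the one-step statement $[I_k, a] \subseteq I_{k+1}$ is simply false (for $u = [x,y]z \in I_2$ one has $[u,a] = [x,y][z,a] + [x,y,a]z$, and $[x,y][z,a]$ has nonzero evaluations in the Grassmann algebra $E$, where every element of $I_3$ vanishes); and even the two-step version $[I_k, x, y] \subseteq I_{k+2}$ (Lemma \ref{lemma_In}) is a nontrivial theorem of Deryabina--Krasilnikov \cite{DeK2} requiring $\operatorname{char} K \neq 3$, while Theorem \ref{thm_prod} holds over every field. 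So what you have is a correct framing that reduces the theorem to a statement of essentially the same depth, with the technical heart left unproved.
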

	
	\begin{theorem} [\cite{DeK2}]\label{thm_prod_odd} 
		Let $\operatorname{char} K \neq 3$. If one of $m$ and $n$ is odd, then
		\[
		I_m I_n \subset I_{m+n-1}.
		\]	
	\end{theorem}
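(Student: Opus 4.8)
The plan is to bootstrap from Theorem~\ref{thm_prod}, which already yields $I_m I_n \subseteq I_{m+n-2}$; the entire problem is to gain one further index from the parity hypothesis, so throughout I would work modulo $I_{m+n-1}$. Writing $R = K\langle X\rangle$, the first step is to reduce the inclusion of ideals to a single product of two pure commutators. Since $I_m$ and $I_n$ are $T$-ideals, $I_m I_n$ is spanned by elements $u\,c_m\,w\,c_n\,v$, where $c_m=[x_1,\dots,x_m]$ and $c_n=[x_{m+1},\dots,x_{m+n}]$ are left-normed commutators in distinct variables and $u,w,v$ are words; as $I_{m+n-1}$ is a two-sided ideal, the outer words $u,v$ may be discarded. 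The middle word is removed by moving it past $c_n$: from $w c_n = c_n w + [w,c_n]$ and $[w,c_n]\in L_{n+1}\subseteq I_{n+1}$, Theorem~\ref{thm_prod} gives $c_m[w,c_n]\in I_m I_{n+1}\subseteq I_{m+n-1}$, so $c_m w c_n \equiv c_m c_n w \pmod{I_{m+n-1}}$. Thus it suffices to prove the single claim that $c_m c_n \in I_{m+n-1}$ whenever $m$ is odd; the case where $n$ is odd then follows by applying the anti-automorphism $R\to R$ reversing products, which preserves each $I_p$ and interchanges $I_m I_n$ with $I_n I_m$.

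For this claim I would induct on $n$, peeling one variable off the longer commutator. Using $c_n=[c_{n-1},x_{m+n}]=c_{n-1}x_{m+n}-x_{m+n}c_{n-1}$ together with $c_m x_{m+n}=x_{m+n}c_m+c_{m+1}$, where $c_{m+1}=[c_m,x_{m+n}]$, one obtains the identity $c_m c_n = [c_m c_{n-1},x_{m+n}]-c_{m+1}c_{n-1}$. Both summands lie a priori only in $I_{m+n-2}$: the first because inductively $c_m c_{n-1}\in I_{m+n-2}$, and the second by Theorem~\ref{thm_prod}. The difficulty is exactly that $[I_{m+n-2},R]$ is in general contained only in $I_{m+n-2}$, not in $I_{m+n-1}$, so crude Leibniz bookkeeping loses precisely the index we are trying to save. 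To close the induction I would therefore strengthen the hypothesis, carrying not merely the membership $c_m c_{n-1}\in I_{m+n-2}$ but a refined \emph{centrality modulo $I_{m+n-1}$} for a product of an odd-length commutator with another commutator, so that the bracket $[c_m c_{n-1},x_{m+n}]$ is absorbed into $I_{m+n-1}$; the term $c_{m+1}c_{n-1}$ must be controlled by the companion statement obtained when the odd factor is the second one.

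The crux, and the step where the parity of $m$ and the hypothesis $\operatorname{char}K\neq 3$ are both indispensable, is the base case $n=2$, namely $c_m[x_{m+1},x_{m+2}]\in I_{m+1}$ for odd $m$. Here commuting alone is useless, since $c_m$ is already central modulo $I_{m+1}$ and yet $c_m[x_{m+1},x_{m+2}]$ need not vanish; instead I would establish an explicit identity in $R/I_{m+1}$ writing $c_m[x_{m+1},x_{m+2}]$ as a symmetrization of length-$(m+1)$ commutators that collapses to zero exactly when $m$ is odd. The characteristic restriction should enter through a coefficient divisible by $3$ that must be inverted in this rewriting, mirroring the $3$-torsion appearing in the known description of the associated graded ring $\bigoplus_p I_p/I_{p+1}$ via differential forms; correspondingly I would expect the statement to fail genuinely in characteristic $3$. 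Producing and verifying this base-case identity, with tight control of the characteristic, is the main obstacle; once it and the strengthened inductive hypothesis are in place, the reduction above delivers $I_m I_n\subseteq I_{m+n-1}$.
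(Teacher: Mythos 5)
The paper itself does not prove Theorem \ref{thm_prod_odd} --- it quotes it from \cite{DeK2} --- so your attempt has to be measured against the actual content of that result, and by that measure it has a genuine gap. Your preliminary reductions are correct: discarding the outer words, absorbing the middle word via $c_m[w,c_n]\in I_mI_{n+1}\subseteq I_{m+n-1}$ (Theorem \ref{thm_prod}), and transferring the case ``$n$ odd'' to ``$m$ odd'' by the reversal anti-automorphism, which sends $[w_1,\dots,w_p]$ to $(-1)^{p-1}[w_p^*,\dots]$-type commutators and hence preserves each $I_p$. But these are the routine steps; everything the theorem actually asserts is concentrated in the two items you explicitly leave unproven, and your inductive scheme cannot supply them.

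Concretely, your peeling identity $c_mc_n=[c_mc_{n-1},x_{m+n}]-c_{m+1}c_{n-1}$ is nothing but the Leibniz rule $[c_mc_{n-1},x]=c_{m+1}c_{n-1}+c_mc_n$ rearranged, so the ``strengthened centrality hypothesis'' $[c_mc_{n-1},x]\in I_{m+n-1}$ is \emph{equivalent} to the assertion that the sum of the two terms you are trying to bound lies in $I_{m+n-1}$: the induction step carries no information beyond what it assumes, and nothing in your outline breaks this circle. The parity bookkeeping also fails to close: if $m$ and $n$ are both odd, the remainder $c_{m+1}c_{n-1}$ is a product of two even-length commutators, to which no ``companion statement'' applies (and even--even products genuinely do not gain the extra index: $[x_1,x_2][x_3,x_4]\notin I_3$, since the Grassmann algebra satisfies every element of $I_3$ but not this product); and when $n$ is even, the companion you invoke is an instance of the theorem itself at the same total weight $m+n$, so appealing to it requires a well-founded double induction that you never set up. Finally, the base case $c_m[x_{m+1},x_{m+2}]\in I_{m+1}$ for odd $m$ --- which you correctly identify as the crux, and which is indeed where $\operatorname{char} K\neq 3$ enters (it is known that $3[x_1,x_2,x_3][x_4,x_5]\in I_4$ holds with integer coefficients, while the inclusion without the factor $3$ fails in the presence of $3$-torsion) --- is offered only as an expectation that a suitable identity exists. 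In \cite{DeK2} precisely this identity-theoretic work, of the same nature as the computation behind Lemma \ref{lemma_In}, \emph{is} the proof; your proposal reduces the theorem correctly to its hard core and then stops, so it is not a proof.
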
		

We also need the following lemma from \cite{DeK2}, which can be found also in \cite{GrP} for $\operatorname{char} K \neq 2,3$.	
	\begin{lemma} [\cite{DeK2}] \label{lemma_In} 
		Let $\operatorname{char} K \neq 3$ and let $n \geq 2$. Let $u \in I_n$ and $x,y \in K\left\langle X \right\rangle$. Then
		\[
		[u, x, y] \in I_{n+2}.
		\]
		In other words, for all $n\geq 2$ it holds that
		\[
		[I_n, x,y] \subset I_{n+2}.
		\]
	\end{lemma}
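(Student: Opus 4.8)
The plan is to reduce to a single generator of $I_n$ and expand a double commutator by the Leibniz rule. Since $I_{n+2}$ is a T-ideal and $I_n = K\left\langle X \right\rangle \cdot L_n$ is spanned by elements of the form $vc$ with $v \in K\left\langle X \right\rangle$ and $c \in L_n$ a left-normed commutator of length $n$, it suffices to prove $[vc, x, y] \in I_{n+2}$. Applying $[ab,z] = a[b,z] + [a,z]b$ twice yields
\[
[vc, x, y] = v[c,x,y] + [v,x,y]\,c + \big([v,x][c,y] + [v,y][c,x]\big).
\]
The first summand lies in $I_{n+2}$ because $[c,x,y] \in L_{n+2}$, and the second lies in $I_3 I_n \subseteq I_{n+2}$ by Theorem~\ref{thm_prod_odd} (here $3$ is odd and $\operatorname{char} K \neq 3$). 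Thus everything comes down to the middle term $E := [v,x][c,y] + [v,y][c,x]$.

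When $n$ is even this is immediate: then $[c,x], [c,y] \in L_{n+1}$ with $n+1$ odd, so $E \in I_2 I_{n+1} \subseteq I_{n+2}$, again by Theorem~\ref{thm_prod_odd}. The whole difficulty is therefore concentrated in the case $n$ odd, where $I_2 I_{n+1}$ only yields $I_{n+1}$ via Theorem~\ref{thm_prod} and no parity is left to exploit.

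For $n$ odd I would first dispose of the case $\operatorname{char} K \neq 2$ by a commutator identity. Since $[[v,x],c] \in [L_2, L_n] \subseteq L_{n+2} \subseteq I_{n+2}$, one has $[v,x]\,c \equiv c\,[v,x] \pmod{I_{n+2}}$, and hence
\[
[v,x][c,y] = [v,x]cy - [v,x]yc \equiv c[v,x]y - [v,x]yc = \big[c,\,[v,x]y\big] \pmod{I_{n+2}}.
\]
Summing the two halves gives $E \equiv \big[c,\ [v,x]y + [v,y]x\big]$. Using $[v,x]y = [v,xy] - x[v,y]$ and $[v,y]x = [v,yx] - y[v,x]$, and discarding $[c,[v,xy]], [c,[v,yx]] \in [L_n,L_2] \subseteq I_{n+2}$ together with the inner brackets $[c,[v,\cdot]]$, one pushes the monomial factors back out and arrives at $E \equiv -\big([c,x][v,y] + [c,y][v,x]\big)$. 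On the other hand, $[[v,x],[c,y]] \in [L_2,L_{n+1}] \subseteq I_{n+2}$ gives $E \equiv [c,y][v,x] + [c,x][v,y]$ directly. Comparing the two congruences yields $2E \in I_{n+2}$, so $E \in I_{n+2}$ whenever $\operatorname{char} K \neq 2$. This recovers the lemma for $\operatorname{char} K \neq 2,3$, the range treated in \cite{GrP}.

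The main obstacle is the remaining case $n$ odd and $\operatorname{char} K = 2$, where the identity above degenerates ($2E = 0$ carries no information) and Theorem~\ref{thm_prod_odd} cannot upgrade the even product $I_2 I_{n+1}$. Closing this gap is precisely the additional content of \cite{DeK2}: I expect it to require a genuinely characteristic-free commutator identity expressing $E$, modulo $I_{n+2}$, as an element of a product $I_3 I_{n-1}$ (or $I_n I_3$) to which the odd-index bound of Theorem~\ref{thm_prod_odd} applies, rather than relying on the parity of $n+1$. Establishing such a rewriting---transferring one degree of commutator weight from $c$ onto the length-two factors while controlling the error terms---is the step I would expect to be the most delicate.
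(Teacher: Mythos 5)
Your reduction, the Leibniz expansion of $[vc,x,y]$, and the treatment of the two outer summands (via $L_{n+2}$ and via $I_3I_n\subset I_{n+2}$ from Theorem~\ref{thm_prod_odd}) coincide with the proof the paper follows (see the proof of Remark~\ref{rem_In}, which mirrors \cite{DeK2} with $cu$ in place of $vc$). But your handling of the cross term $E=[v,x][c,y]+[v,y][c,x]$ leaves a genuine gap: your argument splits on the parity of $n$ and, for $n$ odd, hinges on comparing two congruences to get $2E\in I_{n+2}$, which is vacuous in characteristic $2$. Since the lemma assumes only $\operatorname{char}K\neq 3$, the case $\operatorname{char}K=2$, $n$ odd is part of the statement and is not proved; as you yourself note, what you establish is only the range $\operatorname{char}K\neq 2,3$ already covered by \cite{GrP}.

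The missing ingredient is exactly the kind of characteristic-free identity you predict, and it is short: after swapping factors modulo $[L_2,L_{n+1}]\subseteq I_{n+2}$ (as you already do), the cross term is handled by
\[
[y,c][x,v]+[y,v][x,c]=-[c,yx,v]+y[c,x,v]+[c,y,v]x,
\]
whose right-hand side lies in $K\langle X\rangle L_{n+2}K\langle X\rangle\subseteq I_{n+2}$ term by term, with no division and no parity assumption on $n$; this is the identity used in the paper's proof of Remark~\ref{rem_In}, following \cite{DeK2} (there written as $[v,c][w,u]+[v,u][w,c]=-[c,vw,u]+v[c,w,u]+[c,v,u]w$, together with $[c,w][u,v]=[v,u][w,c]+[[c,w],[u,v]]$). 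Note also that it lands the cross term directly in $I_{n+2}$ rather than in a product $I_3I_{n-1}$ as you conjectured, so the only place $\operatorname{char}K\neq 3$ enters the whole proof is the single application of Theorem~\ref{thm_prod_odd} to the summand $[v,x,y]c\in I_3I_n$; with this identity your parity case split becomes unnecessary.
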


\begin{remark} \label{rem_In} We notice that when $\operatorname{char} K = 3$, if we go over the proof of Lemma \ref{lemma_In} given in \cite{DeK2} and instead of the property $I_nI_3\subset I_{n+2}$, we use the property $I_nI_3 \subset I_{n+1}$ (which follows from Theorem \ref{thm_prod} for any characteristic of $K$), then we obtain the property
\[
[I_n, x,y] \subset I_{n+1}.
\]
\end{remark}
For completeness of the exposition, we will give a short proof of Remark \ref{rem_In} following the proof of Lemma \ref{lemma_In} given in \cite{DeK2}.
\begin{proof}
	Let $c \in L_n$, and $u,v,w \in K\left\langle X \right\rangle$. To proof the remark it suffices to show that
	\[
	[cu, v, w] \in I_{n+1}.
	\]
We have,
\[
[cu, v, w] = c[u,v,w] + [c,v][u,w] + [c,w][u,v] + [c,v,w]u.
\]
By Theorem \ref{thm_prod}, $c[u,v,w] \in I_{n+1}$. It is clear that $[c,v,w]u \in I_{n+2}$. Finally, as it is shown in \cite{DeK2},
\[
[c,v][u,w] + [c,w][u,v] = [v,c][w,u] + [v,u][w,c] + [[c,w], [u,v]] \in I_{n+2}
\]
since
\[
[v,c][w,u] + [v,u][w,c] = -[c, vw, u] + v[c,w,u] + [c,v,u]w.
\]
\end{proof}

We are now ready to prove the following statement, which is a generalization of Corollary 2.2 from \cite{DeK}.
	
	\begin{theorem} \label{thm_tensorProduct} 
	Let $K$ be any field. Let $G$ and $H$ be Lie nilpotent associative algebras over $K$ of indices respectively $p_1$ and $p_2$ such that $H$ satisfies also the identity $[x_1, x_2]\cdots [x_{2k-1}, x_{2k}] = 0$ for some $k \geq 2$. Then, $G\otimes H$ is again Lie nilpotent. Moreover, if $q$ is the minimal integer such that $[x_1, \dots, x_{q+1}]=0$ is an identity for $G\otimes H$, then $$\max \{p_1, p_2\} \leq q \leq 4(p_1 + p_2 + k -2).$$ When $\operatorname{char} K \neq 3$, there is a better upper bound for $q$, namely, $$\max \{p_1, p_2\} \leq q \leq 4(\left\lceil\frac{p_1-1}{2} \right\rceil + \left\lceil \frac{p_2-1}{2}\right\rceil + k).$$
	\end{theorem}
	
	\begin{proof}
	We first notice that if $a,c \in G$ and $b,d \in H$, then
		\begin{equation} \label{eq_multRule}
		\begin{aligned}
		&[a\otimes b, c\otimes d] = ac\otimes bd - ca\otimes db \pm ca\otimes bd = \\
		&[a,c]\otimes bd + ca\otimes[b,d].
		\end{aligned}
	\end{equation}
		Using the same notations as in \cite{DeK}, we set 
		\[
		c_{\ell} = [g_1\otimes h_1, \dots, g_{\ell}\otimes h_{\ell}],
		\]
		
		where $\ell \geq 2$, $g_i \in G$, and $h_i \in H$. 
		
		Then, Formula (\ref{eq_multRule}) gives the following expression for $c_\ell$
		\[
		c_\ell = \sum_{\varepsilon \in \{0,1\}^{\ell-1}} A^{(\ell)}_{\varepsilon} \otimes B^{(\ell)}_{\varepsilon},
		\]
		where $\varepsilon = (\varepsilon_1, \dots,\varepsilon_{\ell-2}, \varepsilon_{\ell-1}) \in \{0,1\}^{\ell-1}$, $A^{(1)} = g_1$, $B^{(1)} = h_1$ and we have the following recursive rule:
		\begin{itemize}
			\item if $\varepsilon_{\ell-1} = 0$, then
			\[
			A^{(\ell)}_{\varepsilon} = [A^{(\ell-1)}_{(\varepsilon_1, \dots, \varepsilon_{\ell-2})}, g_\ell] \quad B^{(\ell)}_{\varepsilon} = B^{(\ell-1)}_{(\varepsilon_1, \dots, \varepsilon_{\ell-2})}h_\ell;
			\]
			\item if $\varepsilon_{\ell-1} = 1$, then
			\[
			A^{(\ell)}_{\varepsilon} = g_\ell A^{(\ell-1)}_{(\varepsilon_1, \dots, \varepsilon_{\ell-2})} \quad B^{(\ell)}_{\varepsilon} = [B^{(\ell-1)}_{(\varepsilon_1, \dots, \varepsilon_{\ell-2})}, h_\ell]. 
			\]
		\end{itemize}
		
		For example, for $c_2$ and $c_3$ we obtain the following expressions:
		\[
		c_2 = [g_1, g_2]\otimes h_1h_2 + g_2g_1\otimes [h_1, h_2].
		\]	
		\begin{align*}
		&c_3 = [g_1, g_2, g_3] \otimes h_1h_2h_3 + g_3[g_1, g_2]\otimes[h_1h_2, h_3] + \\
		&[g_2g_1, g_3] \otimes [h_1, h_2]h_3 + g_3g_2g_1 \otimes [h_1, h_2, h_3].	
		\end{align*}	
		
		Next, we take $\ell > \max\{p_1,p_2\}$ and take $g\otimes h$ to be one of the summands $A^{(\ell)}_{\varepsilon} \otimes B^{(\ell)}_{\varepsilon}$ in $c_\ell$.   Then, $g$ is a polynomial in $g_1, \dots, g_\ell$ and $h$ is a polynomial in $h_1, \dots, h_\ell$. In other words, $g = f_1(g_1, \dots, g_\ell)$ and $h = f_2(h_1, \dots, h_\ell)$, where $f_1(x_1, \dots, x_\ell), f_2 (x_1, \dots, x_\ell) \in K\left\langle X \right\rangle$.
		
		Since $\ell > \max\{p_1,p_2\}$, the summands $[g_1, \dots, g_\ell]\otimes h_1 \cdots h_\ell$ and $g_\ell \cdots g_1 \otimes [h_1, \dots, h_\ell]$ vanish in $G\otimes H$. Therefore, there is at least one commutator in $g$ and at least one commutator in $h$. In other words, there exist $p'_1$ and $p'_2$ such that $f_1 \in I_{p'_1}$ and $f_2 \in I_{p'_2}$. With a slight abuse of notation we will say that $g \in I_{p'_1}$ and $h \in I_{p'_2}$. In addition, let $h = xu_1\dots u_{s}$ where $u_1, \dots, u_{s}$ are commutators and $x \in K\left\langle X \right\rangle$. Our goal is to take the commutator
		\[
		[g\otimes h, g_{\ell+1}\otimes h_{\ell+1}, g_{\ell+2}\otimes h_{\ell+2}, g_{\ell+3}\otimes h_{\ell+3}, g_{\ell+4}\otimes h_{\ell+4}]
		\]
		and show that it contains only summands of the form
		\begin{itemize}
			\item [(i)] $A^{(\ell+4)}_{\varepsilon'} \otimes B^{(\ell+4)}_{\varepsilon'}$, where $A^{(\ell+4)}_{\varepsilon'} \in I_{p'_1+2}$ (or in $I_{p'_1+1}$ when $\operatorname{char} K = 3$);
			\item [(ii)] $A^{(\ell+4)}_{\varepsilon'} \otimes B^{(\ell+4)}_{\varepsilon'}$, where $B^{(\ell+4)}_{\varepsilon'} \in I_{p'_2+2}$ (or in $I_{p'_2+1}$ when $\operatorname{char} K = 3$);
			\item [(iii)] $A^{(\ell+4)}_{\varepsilon'} \otimes B^{(\ell+4)}_{\varepsilon'}$, where
		$B^{(\ell+4)}_{\varepsilon'}$ has at least $s+1$ commutators.
	\end{itemize}
Here, $\varepsilon' \in \{0,1\}^{\ell+3}$ and $\varepsilon' = (\varepsilon, \varepsilon_{\ell}, \varepsilon_{\ell+1}, \varepsilon_{\ell+2}, \varepsilon_{\ell+3})$ for some $\varepsilon_{\ell}, \varepsilon_{\ell+1}, \varepsilon_{\ell+2}, \varepsilon_{\ell+3} \in \{0,1\}$. Properties (i), (ii), and (iii) will ensure that for large enough $\ell$ all terms in $c_\ell$ will vanish.

Using again Formula (\ref{eq_multRule}) we compute:
\[
[g\otimes h, g_{\ell+1}\otimes h_{\ell+1}] = [g, g_{\ell+1}]\otimes hh_{\ell+1} + g_{\ell+1}g\otimes [h, h_{\ell+1}].
\]

Then,
\begin{equation}\label{eq_commutatorCalculations}
\begin{aligned}
&[g\otimes h, g_{\ell+1}\otimes h_{\ell+1}, g_{\ell+2}\otimes h_{\ell+2}] = \\
&[g, g_{\ell+1}, g_{\ell+2}]\otimes hh_{\ell+1}h_{\ell+2} + g_{\ell+2}[g, g_{\ell+1}]\otimes [hh_{\ell+1},h_{\ell+2}] + \\
&[g_{\ell+1}g, g_{\ell+2}]\otimes [h, h_{\ell+1}]h_{\ell+2} + g_{\ell+2}g_{\ell+1}g \otimes [h, h_{\ell+1}, h_{\ell+2}].	
\end{aligned}
\end{equation}

By Lemma \ref{lemma_In} (or by Remark \ref{rem_In}), we have that $[g, g_{\ell+1}, g_{\ell+2}] \in I_{p'_1 + 2}$ (or in $I_{p'_1 +1}$) and similarly, $[h, h_{\ell+1}, h_{\ell+2}] \in I_{p'_2 + 2}$ (or in $I_{p'_2 + 1}$). Hence, the first and fourth terms in Equation (\ref{eq_commutatorCalculations}) satisfy respectively property (i) and property (ii). It remains to consider the second and the third summands.

When we commute the third summand of Equation (\ref{eq_commutatorCalculations}) with $g_{\ell+3}\otimes h_{\ell+3}$ we obtain
\begin{align*}
	&[[g_{\ell+1}g, g_{\ell+2}]\otimes [h,h_{\ell+1}]h_{\ell+2}, g_{\ell+3}\otimes h_{\ell+3}] = \\
	&[g_{\ell+1}g, g_{\ell+2}, g_{\ell+3}] \otimes [h, h_{\ell+1}]h_{\ell+2}h_{\ell+3} + \\
	&g_{\ell+3}[g_{\ell+1}g, g_{\ell+2}] \otimes [[h, h_{\ell+1}]h_{\ell+2}, h_{\ell+3}].
\end{align*}

Again by Lemma \ref{lemma_In} (or by Remark \ref{rem_In}), the first summand in the above equation satisfies property (i).
For the second summand, we notice the following.
\begin{align} \label{eq_third}
[[h, h_{\ell+1}]h_{\ell+2}, h_{\ell+3}] = [h, h_{\ell+1}][h_{\ell+2}, h_{\ell+3}] + [h, h_{\ell+1}, h_{\ell+3}]h_{\ell+2}.	
\end{align}
The second summand of Equation (\ref{eq_third}) is in $I_{p'_2 + 2}$ (or in $I_{p'_2 +1}$), so it remains to consider the first summand. We use here the fact that $h = xu_1\cdots u_{s}$.
\begin{equation} \label{eq_fourth}
\begin{aligned}
	&[xu_1\cdots u_{s}, h_{\ell+1}][h_{\ell+2}, h_{\ell+3}] = \\
	&[x, h_{\ell+1}]u_1\cdots u_{s} [h_{\ell+2}, h_{\ell+3}] + x[u_1\cdots u_{s}, h_{\ell+1}][h_{\ell+2}, h_{\ell+3}].
\end{aligned}
\end{equation}
The first summand of Equation (\ref{eq_fourth}) contains $s+2$ commutators and thus property (iii) is satisfied. For the second summand, we use that
\[
[u_1\cdots u_{s}, h_{\ell+1}] = \sum_{i = 1}^{s} u_1\cdots u_{i-1} [u_i, h_{\ell+1}]u_{i+1}\cdots u_{s}.
\]
Hence, the second summand of Equation (\ref{eq_fourth}) contains $s+1$ commutators and again property (iii) is satisfied.

It remains to consider the second summand of Equation  (\ref{eq_commutatorCalculations}).
When we commute it 
with $g_{\ell+3}\otimes h_{\ell+3}$ we obtain
\begin{align*}
	&[g_{\ell+2}[g, g_{\ell+1}]\otimes [hh_{\ell+1},h_{\ell+2}], g_{\ell+3}\otimes h_{\ell+3}] = \\
	&[g_{\ell+2}[g, g_{\ell+1}], g_{\ell+3}] \otimes [hh_{\ell+1}, h_{\ell+2}]h_{\ell+3} + \\
	&g_{\ell+3}g_{\ell+2}[g, g_{\ell+1}] \otimes [hh_{\ell+1}, h_{\ell+2}, h_{\ell+3}].
\end{align*}

Again by Lemma \ref{lemma_In} (or by Remark \ref{rem_In}), the second summand in the above equation satisfies property (ii).

It remains to consider the term $[g_{\ell+2}[g, g_{\ell+1}], g_{\ell+3}] \otimes [hh_{\ell+1}, h_{\ell+2}]h_{\ell+3}$ and we commute it with $g_{\ell+4}\otimes h_{\ell+4}$. We obtain,
\begin{align*}
	&[[g_{\ell+2}[g, g_{\ell+1}], g_{\ell+3}] \otimes [hh_{\ell+1}, h_{\ell+2}]h_{\ell+3}, g_{\ell+4}\otimes h_{\ell+4}] = \\
	&[g_{\ell+2}[g, g_{\ell+1}], g_{\ell+3}, g_{\ell+4}] \otimes [hh_{\ell+1}, h_{\ell+2}]h_{\ell+3}h_{\ell+4} + \\
	&g_{\ell+3}[g_{\ell+2}[g, g_{\ell+1}], g_{\ell+3}] \otimes [[hh_{\ell+1}, h_{\ell+2}]h_{\ell+3}, h_{\ell+4}].  
\end{align*}
For the first summand in the above formula we use again Lemma \ref{lemma_In} or Remark \ref{rem_In}. For the second summand we use the same reasoning as in Equations (\ref{eq_third}) and (\ref{eq_fourth}) to show that each term in the expression $[[hh_{\ell+1}, h_{\ell+2}]h_{\ell+3}, h_{\ell+4}]$ contains at least $s+1$ commutators or belongs to $I_{p'_2 + 2}$ ($I_{p'_2 + 1}$).	

The next step is to give a lower and an upper bound on the index $q$ of Lie nilpotency of $G\otimes H$. 
Since for $\ell \leq \max \{p_1, p_2\}$ at least one of the terms $[g_1, \dots, g_\ell]\otimes h_1 \cdots h_\ell$ and $g_\ell \cdots g_1 \otimes [h_1, \dots, h_\ell]$ in $c_{\ell}$ will not vanish, the lower bound for $q$ is clear.

It remains to give an upper bound for $q$. Notice that if we denote $I_1 = K\left\langle X \right\rangle$, then Lemma \ref{lemma_In} and Remark \ref{rem_In} will be valid for $I_1$. Properties (i), (ii), (iii) ensure that if we start with $c_1 = g_1 \otimes h_1$ and compute $c_\ell$ for $\ell = 4(p_1-1 + p_2-1 + k-1) + 1$, then the summands in $c_\ell$ will either vanish or will be of the form $A\otimes B$, where $A \in I_{p_1}$, $B \in I_{p_2}$ and $B$ consists of $k-1$ commutators. Therefore, if we make $4$ more iterations, i.e., compute $c_{\ell +4}$, then all terms will vanish. When $\operatorname{char} K \neq 3$, it is enough to compute $c_{\ell +4}$ for  $\ell = 4(\lceil\frac{p_1-1}{2} \rceil + \lceil \frac{p_2-1}{2}\rceil + k-1) + 1$. Hence, for the index $q$ of Lie nilpotency of $G\otimes H$ we obtain the bounds 
\begin{itemize}
\item When $\operatorname{char} K \neq 3$, then
\[
\max \{p_1, p_2\} \leq q \leq 4(\left\lceil\frac{p_1-1}{2} \right\rceil + \left\lceil \frac{p_2-1}{2}\right\rceil + k).
\]
\item When $\operatorname{char} K = 3$, then
\[
\max \{p_1, p_2\} \leq q \leq 4(p_1-1 + p_2-1 + k).
\]
\end{itemize}
	\end{proof}	

It follows from \cite{J}, that if $H$ is a finitely generated Lie nilpotent associative algebra, then the ideal $I_2$ is nilpotent, or in other words, $H$ satisfies an identity of the form $[x_1, x_2] \cdots [x_{2k-1}, x_{2k}]$ for some $k$. Another way to derive this fact is to notice that the variety $\mathfrak{N}_p$ is a non-matrix variety and hence every finitely generated algebra in $\mathfrak{N}_p$ satisfies a product of double commutators (see e.g. \cite{MPR}). Hence, an immediate corollary of Theorem \ref{thm_tensorProduct} is the following statement.

\begin{corollary}Let $K$ be any field. Let $G$ be any Lie nilpotent associative algebra and let $H$ be finitely generated and Lie nilpotent. Then $G\otimes H$ is again Lie nilpotent.
\end{corollary}	

As another corollary of Theorem \ref{thm_tensorProduct} we obtain the following statement which was first proven by Drensky in \cite{D}.
	
	\begin{corollary} [\cite{D}] \label{coro_tensorProduct}
		Let $\operatorname{char} K \neq 2$ and let $F =  E\otimes E_{i_1}\otimes \cdots \otimes E_{i_s}$ or $F = E_{j_1} \otimes E_{j_2} \otimes \cdots \otimes E_{j_t}$ for some integers $s \geq 0$ and $t \geq 1$ and integers $i_1, \dots, i_s, j_1, \dots, j_t \geq 2$. Then, there exists an integer $q \geq 2$ such that $F$ satisfies the identity $[x_1, \dots, x_{q+1}] = 0$.
	\end{corollary}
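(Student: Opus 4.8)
The plan is to apply Theorem \ref{thm_tensorProduct} repeatedly, peeling off one finite-dimensional Grassmann factor at a time. The crucial preliminary observation, which I would establish first, is that each $E_r$ with $r \geq 2$ satisfies a product-of-commutators identity. Writing $E_r = (E_r)_0 \oplus (E_r)_1$ for the natural $\mathbb{Z}_2$-grading into even and odd parts, the even part is central and for odd elements $a, b$ one has $[a,b] = 2ab$ (here the hypothesis $\operatorname{char} K \neq 2$ enters). Hence any product $[a^{(1)}, b^{(1)}] \cdots [a^{(k)}, b^{(k)}]$ of $k$ commutators is, up to a scalar, a sum of products each involving at least $2k$ generators, and therefore vanishes as soon as $2k > r$, since any product of more than $r$ generators is zero in $E_r$. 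Taking $k = \lfloor r/2 \rfloor + 1$, which satisfies $k \geq 2$ precisely because $r \geq 2$, we conclude that $E_r$ satisfies both $[x_1, x_2, x_3] = 0$ and $[x_1, x_2] \cdots [x_{2k-1}, x_{2k}] = 0$; that is, $E_r$ is an admissible choice for the algebra $H$ in Theorem \ref{thm_tensorProduct}.

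With this in hand I would treat the two prescribed forms of $F$ by induction on the number of tensor factors. For $F = E_{j_1} \otimes \cdots \otimes E_{j_t}$ I induct on $t$: the base case $t = 1$ holds because $E_{j_1}$ satisfies $[x_1, x_2, x_3] = 0$, and in the inductive step I write $F = G \otimes E_{j_t}$ with $G = E_{j_1} \otimes \cdots \otimes E_{j_{t-1}}$. By the inductive hypothesis $G$ satisfies $[x_1, \dots, x_p] = 0$ for some $p \geq 3$, while $E_{j_t}$ serves as $H$ by the previous paragraph, so Theorem \ref{thm_tensorProduct} produces an identity $[x_1, \dots, x_q] = 0$ for $F$ with $q \geq p \geq 3$. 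The case $F = E \otimes E_{i_1} \otimes \cdots \otimes E_{i_s}$ is handled identically by induction on $s$: the base case $s = 0$ is just that $E$ is Lie nilpotent of index $2$, and in the inductive step $F = G \otimes E_{i_s}$ with $G = E \otimes E_{i_1} \otimes \cdots \otimes E_{i_{s-1}}$, which is Lie nilpotent by the inductive hypothesis, so Theorem \ref{thm_tensorProduct} again applies.

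The structural point that makes exactly these two shapes work — and which I would emphasize — is the asymmetry built into Theorem \ref{thm_tensorProduct}: the factor $H$ must satisfy a product-of-commutators identity, whereas $G$ need only be Lie nilpotent. A single infinite-dimensional factor $E$ can therefore always be kept inside $G$, but it can never play the role of $H$, since $I_2$ is not nilpotent in $E$ (equivalently, $E \otimes E$ is not Lie nilpotent). This is precisely why at most one copy of $E$ is permitted among the factors. I do not anticipate a genuine obstacle in the argument; the only points requiring care are to keep the unique copy of $E$ inside the $G$-factor at every stage of the induction and to record that $k \geq 2$ in the commutator identity for $E_r$, which is automatic for all $r \geq 2$.
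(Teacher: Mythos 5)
Your proposal is correct and takes essentially the same approach as the paper: induction on the number of tensor factors, peeling off the last finite-dimensional factor $E_r$ and applying Theorem \ref{thm_tensorProduct} with $G$ the remaining product (containing the single copy of $E$, if any) and $H = E_r$. The only difference is cosmetic: where the paper cites as well-known that $E_{2k}$ and $E_{2k+1}$ satisfy $[x_1,x_2,x_3]=0$ and $[x_1,x_2]\cdots[x_{2k+1},x_{2k+2}]=0$, you supply the short grading argument proving it, with $k=\lfloor r/2\rfloor+1$ matching the paper's choice.
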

	
	\begin{proof}
		The proof follows by induction on $s$ and $t$. It is well-known that the Grassmann algebras $E$ and $E_{j_1}$ satisfy the identity $[x_1, x_2, x_3] = 0$. Next, assume that $F = F' \otimes E_r$, where $F' = E\otimes E_{i_1}\otimes \cdots \otimes E_{i_{s-1}}$ or $F' =  E_{j_1} \otimes E_{j_2} \otimes \cdots \otimes E_{j_{t-1}}$ and $r = i_s$ or $r = j_t$. Then, by the induction hypothesis there exists $q'$ such that $F'$ satisfies $[x_1, \dots, x_{q'+1}]=0$. Furthermore, it is well-known that if $E_r = E_{2k}$ or $E_r = E_{2k+1}$, then $E_r$ satisfies the identities $[x_1, x_2, x_3]$ and $[x_1, x_2] \cdots [x_{2k+1}, x_{2k+2}] = 0$. Therefore, by Theorem \ref{thm_tensorProduct}, there exists an integer $q$ such that $F'\otimes E_r$ satisfies $[x_1, \dots, x_{q+1}] = 0$.	
	\end{proof}	
	
The upper bound from Theorem \ref{thm_tensorProduct} is in general much bigger than the minimal value of $q$ for which $[x_1, \dots, x_{q+1}] = 0$ is an identity for $G\otimes H$. If we make further restrictions on the algebra $H$ from Theorem \ref{thm_tensorProduct} and we set $\operatorname{char} K \neq 3$, we are able to determine explicitly a better value of $q$. In many examples, this will turn out to be the minimal value of $q$ for which $[x_1, \dots, x_{q+1}]=0$ is an identity for $G\otimes H$.
	
	\begin{proposition} \label{prop_addE2}
		Let $\operatorname{char} K \neq 3$. Let $F = G\otimes H$ such that $G$ satisfies $[x_1, \dots, x_p] = 0$ and $H$ satisfies $[x_1, x_2, x_3] = 0$ and $[x_1, x_2][x_3, x_4] = 0$. Then,
		\begin{itemize}
			\item[(i)] If $p$ is even, then $G\otimes H$ satisfies $[x_1, \dots, x_{p+1}] = 0$.
			\item[(ii)] If $p$ is odd, then $G\otimes H$ satisfies $[x_1, \dots, x_{p+2}] = 0$.
		\end{itemize}	
	\end{proposition}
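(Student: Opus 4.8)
The plan is to resume the computation of $c_q = [g_1 \otimes h_1, \dots, g_q \otimes h_q]$ from the proof of Theorem \ref{thm_tensorProduct} in the special case $k = 2$, and to track the $G$-side tensor factor of each surviving summand with more care. Because $H$ now satisfies $[x_1,x_2][x_3,x_4] = 0$, any product of two double commutators on the $H$-side is zero; reading Equations (\ref{eq_typeII}) and (\ref{eq_typeIII}) with this in mind, once a single double commutator has appeared to the right of the tensor sign no second one can ever be created, and each further bracketing simply sends the $G$-side $D$ to $[D, g]$ while appending a factor on the $H$-side. Hence every summand of $c_q$ is either of type I, with $G$-side the long commutator $[g_1, \dots, g_q] \in L_q$, or else arises by creating the unique double commutator at some step $m$ and then left-bracketing, in which case its $G$-side is $[\,g_{m+1}[g_1,\dots,g_m],\, g_{m+2}, \dots, g_q\,]$. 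Only $1 \le m \le p-1$ need be considered, since for $m \ge p$ the factor $[g_1,\dots,g_m] \in L_m$ already vanishes in $G$.

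First I would eliminate the type I summand: as $q \ge p+1$, we have $[g_1,\dots,g_q] \in L_q \subseteq L_p$, which is zero in $G$. It then remains to bound the $G$-side of the other summands, and the goal is to prove it lies in $I_{q-1}$; this suffices because $I_{q-1} \subseteq I_p$ vanishes in $G$ whenever $q - 1 \ge p$. To estimate $[\,g_{m+1}w,\, g_{m+2}, \dots, g_q\,]$ with $w = [g_1,\dots,g_m] \in L_m$, I would apply the iterated Leibniz rule to the inner derivations $\mathrm{ad}_{g_{m+2}}, \dots, \mathrm{ad}_{g_q}$ acting on the product $g_{m+1} \cdot w$. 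This expresses the $G$-side as a sum of products $P \cdot Q$, where $P$ is $g_{m+1}$ bracketed with some $a$ of the generators, so that $P \in L_{a+1} \subseteq I_{a+1}$, and $Q$ is $w$ bracketed with the remaining $b$ of them, so that $Q \in L_{m+b} \subseteq I_{m+b}$, with $a + b = q - m - 1$.

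The crucial point is the parity identity $(a+1) + (m+b) = q$. When $q$ is odd, at least one of the indices $a+1$ and $m+b$ must be odd, so Theorem \ref{thm_prod_odd} — the only place the hypothesis $\operatorname{char} K \neq 3$ is used — gives $P \cdot Q \in I_{a+1} I_{m+b} \subseteq I_{(a+1)+(m+b)-1} = I_{q-1}$ for every summand with both indices at least $2$, whence the whole $G$-side lies in $I_{q-1}$. Choosing $q$ odd with $q - 1 \ge p$ then finishes the proof: for $p$ even this is $q = p+1$ and for $p$ odd it is $q = p+2$, exactly matching (i) and (ii). Note that had we used only the general inclusion $I_m I_n \subseteq I_{m+n-2}$ of Theorem \ref{thm_prod}, we would obtain merely $I_{q-2}$ and lose the parity saving that separates the two cases; this is why the sharper odd-index theorem, rather than iterating Lemma \ref{lemma_In}, is the key tool.

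I expect the only real work to be bookkeeping: confirming that the Leibniz expansion yields precisely the left-normed commutators $P$ and $Q$ with the stated index ranges, and handling the degenerate summands where $a = 0$, or where $m = 1$ together with $b = 0$, so that one of the two indices collapses to $1$ and Theorem \ref{thm_prod_odd} does not apply. In each such case the other index already equals $q-1$, so the product lies in $I_{q-1}$ simply because $I_{q-1}$ is a two-sided ideal, and no appeal to a product theorem is needed.
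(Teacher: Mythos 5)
Your proof is correct, and while it launches from the same decomposition as the paper --- the observation that, because $H$ kills products of two double commutators, every summand of $c_q$ is either the type I term $A_q\otimes h_1\cdots h_q$ or has $G$-side $[g_{m+1}[g_1,\dots,g_m],g_{m+2},\dots,g_q]$, which is exactly the paper's Equation (\ref{eq_E2}) --- you finish by a genuinely different mechanism. The paper keeps each such $G$-side as a single left-normed commutator and climbs $I_{2j}\subset I_{2j+2}\subset\cdots\subset I_p$ by iterating Lemma \ref{lemma_In} two steps at a time; this forces a case split according to the parity of the position where the double commutator was created (the two sums over $i=2j$ and $i=2j+1$), a separate run for even and odd $p$, and one auxiliary inclusion $I_2I_{2j-1}\subset I_{2j}$ (itself an instance of Theorem \ref{thm_prod_odd}). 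You instead expand $[g_{m+1}w,g_{m+2},\dots,g_q]$ with $w=[g_1,\dots,g_m]$ by the iterated Leibniz rule into products $P\cdot Q$ with $P\in L_{a+1}$, $Q\in L_{m+b}$, $(a+1)+(m+b)=q$, and apply Theorem \ref{thm_prod_odd} once per product: with $q$ odd, one index is odd and $PQ\in I_{q-1}\subseteq I_p$. The single parity identity thus replaces all of the paper's case distinctions, and your handling of the degenerate indices ($a=0$, or $m=1$ with $b=0$) via the ideal property is right. Two remarks: your aside that Theorem \ref{thm_prod} would yield ``merely $I_{q-2}$'' undersells case (ii), where $q-2=p$ already suffices, so your route proves (ii) without $\operatorname{char}K\neq 3$ and only (i) needs the odd-index theorem; and your structural claim about $c_q$ carries an implicit induction (the $k=2$ specialization of the type analysis in Theorem \ref{thm_tensorProduct}), which matches the paper's level of detail for (\ref{eq_E2}) but should be recorded explicitly in a written version.
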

	
	\begin{proof}
		Let us denote again for $\ell \geq 2$, $g_i \in G$ and $h_i \in H$
		\[
		c_{\ell} = [g_1\otimes h_1, \dots, g_{\ell}\otimes h_{\ell}].
		\]
		
		We introduce also the following further notation: $A_2 = [g_1, g_2]$ and $A_i = [A_{i-1}, g_i]$ for $i > 2$. Then, having in mind that $H$ satisfies $[x_1, x_2][x_3, x_4]$, we prove by induction that for each $k \geq 3$
		\begin{equation} \label{eq_E2}
			\begin{aligned}
				&c_k = A_k \otimes h_1 \cdots h_k + g_kA_{k-1} \otimes [h_1\cdots h_{k-1}, h_k] + \\
				&+ \sum_{i=4}^k [g_{i-1}A_{i-2}, g_i, \dots, g_k] \otimes [h_1 \cdots h_{i-2}, h_{i-1}]h_i\cdots h_k +\\
				&+ [g_2g_1, g_3, \dots, g_k]\otimes [h_1, h_2]h_3\cdots h_k.
			\end{aligned}
		\end{equation}	
		
		We assume first that $p$ is even. Then, Equation (\ref{eq_E2}) yields
		\begin{align*}
			&c_{p+1} = A_{p+1} \otimes h_1 \cdots h_{p+1} + g_{p+1}A_{p} \otimes [h_1\cdots h_{p}, h_{p+1}] + \\
			&+ \sum_{\substack{i=4 \\ i=2j}}^{p} [g_{2j-1}A_{2j-2}, g_{2j}, \dots, g_{p+1}] \otimes [h_1 \cdots h_{2j-2}, h_{2j-1}]h_{2j}\cdots h_{p+1} +\\
			&+ \sum_{\substack{i=5 \\ i=2j+1}}^{p+1} [g_{2j}A_{2j-1}, g_{2j+1}, \dots, g_{p+1}] \otimes [h_1 \cdots h_{2j-1}, h_{2j}]h_{2j+1}\cdots h_{p+1} +\\
			&+ [g_2g_1, g_3, \dots, g_{p+1}]\otimes [h_1, h_2]h_3\cdots h_{p+1}.
		\end{align*}
		
		By definition, $A_p \in L_p$ and $A_{p+1} \in L_{p+1}$, hence the first two summands are equal to zero. Similarly, $[g_2g_1, g_3, \dots, g_{p+1}] \in L_p$, hence the last summand is also equal to zero.
		
		
		Next, we consider the following term
		\[
		[g_{2j}A_{2j-1}, g_{2j+1}] = [g_{2j}, g_{2j+1}]A_{2j-1} + g_{2j}[A_{2j-1}, g_{2j+1}].
		\]
		
		Since, $A_{2j-1} \in L_{2j-1}$ and $I_2I_{2j-1} \subset I_{2j}$, both summands in the above equation belong to $I_{2j}$. Then, by Lemma \ref{lemma_In}
		\[
		[g_{2j}A_{2j-1}, g_{2j+1}, g_{2j+2}, g_{2j+3}] \subset [I_{2j}, g_{2j+2}, g_{2j+3}] \subset I_{2j+2}.
		\]
		
		Since $p+1$ is odd, by repeatedly applying Lemma \ref{lemma_In}, we obtain
		\[
		[g_{2j}A_{2j-1}, g_{2j+1}, \dots, g_{p+1}] \in I_p.
		\]
		
		Hence, the fourth term in the expression for $c_{p+1}$ vanishes as well.
		
		Finally, by Lemma \ref{lemma_In}, 
		\[
		[g_{2j-1}A_{2j-2}, g_{2j},g_{2j+1}] \in I_{2j}.
		\]
		
		Since $p+1$ is odd, by repeatedly applying Lemma \ref{lemma_In}, we obtain
		\[
		[g_{2j-1}A_{2j-2}, g_{2j}, \dots, g_{p+1}] \in I_p.
		\]
		
		Hence, the third term in the expression for $c_{p+1}$ vanishes as well, and this shows that $c_{p+1} = 0$. 
		
		Next, assume that $p$ is odd. Then, Equation (\ref{eq_E2}) implies
		\begin{align*}
			&c_{p+2} = A_{p+2} \otimes h_1 \cdots h_{p+2} + g_{p+2}A_{p+1} \otimes [h_1\cdots h_{p+1}, h_{p+2}] + \\
			&+ \sum_{\substack{i=4 \\i=2j}}^{p+1} [g_{2j-1}A_{2j-2}, g_{2j}, \dots, g_{p+2}] \otimes [h_1 \cdots h_{2j-2}, h_{2j-1}]h_{2j}\cdots h_{p+2} +\\
			&+ \sum_{\substack{i = 5 \\ i=2j+1}}^{p+2} [g_{2j}A_{2j-1}, g_{2j+1}, \dots, g_{p+2}] \otimes [h_1 \cdots h_{2j-1}, h_{2j}]h_{2j+1}\cdots h_{p+2} +\\
			&+ [g_2g_1, g_3, \dots, g_{p+2}]\otimes [h_1, h_2]h_3\cdots h_{p+2}.
		\end{align*}
		
		Similarly to the even case, we show that all terms in the above sum vanish, and hence for odd $p$ we have $c_{p+2} = 0$.	
	\end{proof}

In the next statements, we give several examples in which we are able to determine explicitly the minimal value of $q$ from Corollary \ref{coro_tensorProduct}. For the statements that depend on Proposition \ref{prop_addE2} we have the additional condition that $\operatorname{char} K \neq 3$, while for the other statements we have only the usual requirement for Grassmann algebras that $\operatorname{char} K \neq 2$. It is interesting to know whether the condition $\operatorname{char} K \neq 3$ can be dropped from all statements.

\begin{proposition} \label{prop_equal}
	Let $\operatorname{char} K \neq 2$. Let $F = E_{2k} \otimes E_{2k}$, $F = E_{2k+1} \otimes E_{2k+1}$, $F = E_{2k} \otimes E_{2k+1}$, or $F = E_{2k+1} \otimes E_{2k}$. Then, $F$ satisfies the identity $[x_1, \dots, x_{2k+2}]$. Furthermore, $F$ does not satisfy the identity $[x_1, \dots, x_{2k+1}]=0$.
\end{proposition}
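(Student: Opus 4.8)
The general tensor-product theorems recalled above are not sharp enough here: since $E_{2k}$ and $E_{2k+1}$ only satisfy a product of $k+1$ commutators, the explicit form of Theorem \ref{thm_tensorProduct} from \cite{DeK} yields at best $[x_1, \dots, x_{2k+3}] = 0$, which is one commutator too long. The plan is therefore to compute directly, exploiting the super-commutative structure of the Grassmann algebras. Write $F = E_{d_1} \otimes E_{d_2}$ with $d_1, d_2 \in \{2k, 2k+1\}$, and use the monomial bases $\{e_S : S \subseteq \{1, \dots, d_1\}\}$ and $\{e'_T : T \subseteq \{1, \dots, d_2\}\}$. To each basis element $z = e_S \otimes e'_T$ I attach the parity vector $v(z) = (|S| \bmod 2,\ |T| \bmod 2) \in \mathbb{F}_2^2$. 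The first step is the elementary identity, valid for any two such basis elements and using only $\operatorname{char} K \neq 2$, that $[z, z'] = \bigl(1 - (-1)^{\langle v(z), v(z')\rangle}\bigr)\, zz'$, where $\langle\,,\,\rangle$ is the standard bilinear form on $\mathbb{F}_2^2$; thus $[z,z'] = 2zz'$ if $\langle v(z), v(z')\rangle = 1$ and $[z,z'] = 0$ otherwise, while $zz' = \pm\, e_{S \cup S'} \otimes e'_{T \cup T'}$ is nonzero precisely when $S \cap S' = T \cap T' = \emptyset$.

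Because $[x_1, \dots, x_m]$ is multilinear, the identity $[x_1, \dots, x_m] = 0$ holds on $F$ if and only if it holds on all tuples of basis elements $z_i = e_{S_i} \otimes e'_{T_i}$. Iterating the commutator formula, I would show that $[z_1, \dots, z_m] = \pm 2^{m-1} (e_{S_1} \cdots e_{S_m}) \otimes (e'_{T_1} \cdots e'_{T_m})$ when the following hold, and vanishes otherwise: (a) the $S_i$ are pairwise disjoint; (b) the $T_i$ are pairwise disjoint; and (c) at every step $2 \le t \le m$ the running parity $w_{t-1} := v(z_1) + \dots + v(z_{t-1})$ satisfies $\langle w_{t-1}, v(z_t)\rangle = 1$. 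Since $\operatorname{char} K \neq 2$ makes $2^{m-1}$ invertible, nonvanishing of $[z_1, \dots, z_m]$ is \emph{equivalent} to (a)--(c). The whole proposition thus reduces to a combinatorial question about sequences $v_1, \dots, v_m \in \{(1,0), (0,1), (1,1)\}$, the value $(0,0)$ being automatically excluded by (c).

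The core of the argument is to read (c) as a walk on $\mathbb{F}_2^2$ with running position $w_t$. I would first observe that no admissible transition ever enters the state $(1,1)$, so $w_t = (1,1)$ can occur only at $t = 1$, and that $(0,0)$ is forbidden for $t \le m-1$ since it admits no continuation; hence $w_t \in \{(1,0), (0,1)\}$ for all $2 \le t \le m-1$. Evaluating the invariant $Q_t = \langle w_t, w_t\rangle$, which equals the parity of the number of factors among $v_1, \dots, v_t$ lying in $\{(1,0),(0,1)\}$, this forces $Q_t = 1$ throughout $2 \le t \le m-1$; consequently such factors occur only at positions $1$ or $2$ (exactly one of them, to make $Q_2 = 1$) and possibly at position $m$. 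Writing $p, q, r$ for the numbers of $v_i$ equal to $(1,0), (0,1), (1,1)$, this gives $p + q \le 2$, while (a) and (b) give $p + r \le d_1$ and $q + r \le d_2$. Summing, $2m = (p+q) + (p+r) + (q+r) \le 2 + d_1 + d_2$, which already yields $m \le 2k+1$ in the three cases with $\{d_1, d_2\} \neq \{2k+1, 2k+1\}$. The hard part is the remaining case $d_1 = d_2 = 2k+1$, where this count only gives $m \le 2k+2$: there one checks that $m = 2k+2$ would force $p = q = 1$ and $r = 2k$, so the walk must use one $(1,0)$ and one $(0,1)$, one as the opening factor and one as a closing move into $(0,0)$. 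A direct parity computation on the alternating walk between $(1,0)$ and $(0,1)$ shows this configuration is possible only for odd $m$, contradicting $m = 2k+2$. Hence $m \le 2k+1$ in all four cases, i.e.\ $F$ satisfies $[x_1, \dots, x_{2k+2}] = 0$.

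For the sharpness claim I would exhibit an explicit nonzero commutator of length $2k+1$. Taking $z_1 = e_1 \otimes 1$, $z_i = e_i \otimes e'_{i-1}$ for $2 \le i \le 2k$, and $z_{2k+1} = 1 \otimes e'_{2k}$, the sets $S_1, \dots, S_{2k+1}$ and $T_1, \dots, T_{2k+1}$ are each pairwise disjoint, and the associated walk $(1,0), (1,1), \dots, (1,1), (0,1)$ satisfies (c) at every step; therefore $[z_1, \dots, z_{2k+1}] = \pm 2^{2k} (e_1 \cdots e_{2k}) \otimes (e'_1 \cdots e'_{2k}) \neq 0$. This same tuple lives inside any $E_{d_1} \otimes E_{d_2}$ with $d_1, d_2 \ge 2k$, so in all four cases $F$ does not satisfy $[x_1, \dots, x_{2k+1}] = 0$.
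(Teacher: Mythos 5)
Your argument is correct, but it takes a genuinely different route from the paper. For the vanishing part the paper computes nothing new: it invokes Lemma 2.1 of \cite{DeK}, which, when both tensor factors satisfy $[x_1,x_2,x_3]=0$, expresses the left-normed commutator of $2k+2$ pure tensors as a sum of exactly two terms, one carrying $[g_1,g_2][g_3,g_4]\cdots[g_{2k+1},g_{2k+2}]$ and the other $[h_1,h_2][h_3,h_4]\cdots[h_{2k+1},h_{2k+2}]$; both die because $E_{2k}$ and $E_{2k+1}$ satisfy the product of $k+1$ double commutators, settling all four cases at once. You instead reduce by multilinearity to monomials, encode them by parity vectors in $\mathbb{F}_2^2$, and convert nonvanishing of $[z_1,\dots,z_m]$ into your walk conditions (a)--(c); the identity $2m=(p+q)+(p+r)+(q+r)\le 2+d_1+d_2$ handles three cases, and the alternating-walk parity kills $m=2k+2$ when $d_1=d_2=2k+1$ (I checked: equality forces $p=q=1$, $r=2k$, and in both subcases --- odd-type vector at position $1$, or at position $2$ with $v_1=(1,1)$, a point your phrase ``opening factor'' glosses over --- the closing condition $w_{m-1}=v_m$ forces $m$ odd, so no gap, though you should write out that two-line check). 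Your sharpness witness is literally the paper's own evaluation $[e_1\otimes 1, e_2\otimes f_1,\dots,e_{2k}\otimes f_{2k-1},1\otimes f_{2k}]=2^{2k}e_1\cdots e_{2k}\otimes f_1\cdots f_{2k}$. The trade-off: the paper's proof is shorter and works verbatim for arbitrary unital algebras satisfying $[x_1,x_2,x_3]=0$ together with the product identity, not just Grassmann algebras; yours is self-contained (no appeal to \cite{DeK}), yields a complete characterization of the nonzero long commutators of monomials in $E_{d_1}\otimes E_{d_2}$ --- hence in principle the exact Lie nilpotency index for any $d_1,d_2$ --- and its parity obstruction to even maximal length is a finite-rank echo of Proposition \ref{prop_oddq}.
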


\begin{proof}
	The conditions of the proposition imply that $F = G\otimes H$ such that both $G$ and $H$ satisfy $[x_1, x_2, x_3] = 0$ and $[x_1, x_2] \cdots [x_{2k+1}, x_{2k+2}] = 0$. Let us again denote for $\ell \geq 2$
	\[
	c_{\ell} = [g_1\otimes h_1, \dots, g_{\ell}\otimes h_{\ell}],
	\]
	where $g_i \in G$ and $h_i \in H$. Then, by Lemma 2.1 from \cite{DeK}, we have
	\begin{align*}
		&c_{2k+2} &= [g_1, g_2][g_3, g_4] \cdots [g_{2k+1}, g_{2k+2}]\otimes [h_1h_2, h_3] [h_4, h_5] \cdots [h_{2k}, h_{2k+1}]h_{2k+2} + \\
		&&+ [g_2g_1, g_3][g_4, g_5] \cdots [g_{2k}, g_{2k+1}]g_{2k+2}\otimes [h_1, h_2] [h_3, h_4] \cdots [h_{2k+1}, h_{2k+2}]. 
	\end{align*}
	
	In the first summand we have $[g_1, g_2]\cdots [ g_{2k+1}, g_{2k+2}] = 0$ and in the second summand we have $[h_1, h_2] \cdots [h_{2k+1}, h_{2k+2}] = 0$. Hence, $c_{2k+2} = 0$.
	
	The next step is to show that $E_{2k} \otimes E_{2k}$ does not satisfy $c_{2k+1} = 0$. Let $\{1, e_1, \dots, e_{2k}\}$ denote the generators of the first copy of $E_{2k}$ and let $\{1, f_1, \dots, f_{2k}\}$ denote the generators of the second copy of $E_{2k}$. Then,
	\[
	[e_1\otimes 1, e_2 \otimes f_1, e_3\otimes f_2, \dots, e_{2k}\otimes f_{2k-1}, 1 \otimes f_{2k}] = 2^{2k}e_1\cdots e_{2k}\otimes f_1\cdots f_{2k} \neq 0.
	\]	
\end{proof}		

We recall that in \cite{DeK} it was proven that $E\otimes E_r$, where $r = 2k$ or $r= 2k+1$, satisfies the identity $[x_1, \dots, x_{2k+3}]$.	
	Therefore, Proposition \ref{prop_addE2} together with \cite{DeK} and Proposition \ref{prop_equal} imply the following corollary.
	
	\begin{corollary}\label{coro_exact_q} Let $\operatorname{char} K \neq 2,3$.
		Let $r = 2k$ or $r = 2k+1$. Then,
		\begin{itemize}
			\item $E\otimes E_{r} \otimes E_2$ and $E \otimes E_r \otimes E_3$ satisfy the identity $[x_1, \dots, x_{2k+5}]=0$;
			\item $E_r \otimes E_r \otimes E_2$ and $E_r \otimes E_r \otimes E_3$ satisfy $[x_1, \dots, x_{2k+3}]=0$.
		\end{itemize}
	\end{corollary}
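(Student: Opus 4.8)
The plan is to read this corollary directly off Proposition \ref{prop_addE2}, once the correct base algebra $G$ and the parity of $p$ are identified; the entire argument is bookkeeping layered on top of the tensor product theorems already established. The only genuine verification needed is that $E_2$ and $E_3$ are admissible choices for the algebra $H$ in Proposition \ref{prop_addE2}.

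First I would record that both $E_2$ and $E_3$ satisfy $[x_1, x_2, x_3] = 0$ (being Grassmann algebras) and $[x_1, x_2][x_3, x_4] = 0$. The latter follows from the standard identity that in a Grassmann algebra $[x, y] = 2 x_{(1)} y_{(1)}$, where $x_{(1)}$ denotes the odd component of $x$, so that $[x_1, x_2][x_3, x_4]$ is, up to a scalar, a product of four odd elements. In both $E_2$ and $E_3$ every odd element has degree at least $1$, so such a product has degree at least $4$; since $E_r$ vanishes in degrees exceeding $r$ and here $r \leq 3$, the product is $0$. This is the only place a computation enters, and it is where I expect the one mild subtlety: in $E_3$ one must remember that the odd component also contains the degree-$3$ element $e_1 e_2 e_3$, and check that including it still forces every product of four odd elements to exceed the top degree.

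For the first bullet I would take $G = E \otimes E_r$. By the result of Deryabina and Krasilnikov recalled in the introduction, $G$ satisfies $[x_1, \dots, x_{2k+3}] = 0$, so here $p = 2k+3$ is odd. Applying Proposition \ref{prop_addE2}(ii) with $H = E_2$ or $H = E_3$ then gives $[x_1, \dots, x_{p+2}] = [x_1, \dots, x_{2k+5}] = 0$ for the algebras $E \otimes E_r \otimes E_2$ and $E \otimes E_r \otimes E_3$.

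For the second bullet I would instead take $G = E_r \otimes E_r$. By Proposition \ref{prop_equal} this algebra satisfies $[x_1, \dots, x_{2k+2}] = 0$, so now $p = 2k+2$ is even, and Proposition \ref{prop_addE2}(i) with $H = E_2$ or $H = E_3$ yields $[x_1, \dots, x_{p+1}] = [x_1, \dots, x_{2k+3}] = 0$ for $E_r \otimes E_r \otimes E_2$ and $E_r \otimes E_r \otimes E_3$. The hypothesis $\operatorname{char} K \neq 2, 3$ is exactly what is consumed: $\operatorname{char} K \neq 2$ to use the Grassmann identities together with Proposition \ref{prop_equal}, and $\operatorname{char} K \neq 3$ to invoke Proposition \ref{prop_addE2}. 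There is no real obstacle beyond matching the parity of $p$ to the correct clause of Proposition \ref{prop_addE2}.
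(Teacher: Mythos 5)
Your proposal is correct and is exactly the derivation the paper intends: the corollary is stated there as an immediate consequence of Proposition \ref{prop_addE2} (with $H = E_2$ or $E_3$, which indeed satisfy $[x_1,x_2,x_3]=0$ and $[x_1,x_2][x_3,x_4]=0$), combined with the Deryabina--Krasilnikov identity $[x_1,\dots,x_{2k+3}]=0$ for $E\otimes E_r$ in the odd case $p=2k+3$ and Proposition \ref{prop_equal} giving $[x_1,\dots,x_{2k+2}]=0$ for $E_r\otimes E_r$ in the even case $p=2k+2$. Your parity matching, your verification of the Grassmann identities via $[x,y]=2x_{(1)}y_{(1)}$ (including the degree-$3$ odd element of $E_3$), and your accounting of the characteristic hypotheses all agree with the paper.
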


\begin{corollary} \label{coro_tensorByE2} 
	Let $\operatorname{char} K \neq 2,3$. Then,
		$E\otimes \underbrace{E_2\otimes \cdots \otimes E_2}_{k}$ and $E\otimes \underbrace{E_3\otimes \cdots \otimes E_3}_{k}$ satisfy $[x_1, \dots, x_{2k+3}] = 0$. Moreover, for $\operatorname{char} K \neq 2$ it holds that $E\otimes \underbrace{E_2\otimes \cdots \otimes E_2}_{k}$ and $E\otimes \underbrace{E_3\otimes \cdots \otimes E_3}_{k}$ do not satisfy $[x_1, \dots, x_{2k+2}] = 0$.
	\end{corollary}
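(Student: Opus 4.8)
The statement splits into an upper bound (an identity holds) and a lower bound (an identity fails), and I would treat them separately. For the upper bound I would argue by induction on $k$, feeding the conclusion of Proposition \ref{prop_addE2} back into its own hypothesis. Both $E_2$ and $E_3$ satisfy $[x_1,x_2,x_3]=0$ and $[x_1,x_2][x_3,x_4]=0$, so each is an admissible choice for the algebra $H$ in that proposition. Writing $F_k = E \otimes H^{\otimes k}$ with $H = E_2$ or $H = E_3$, the base case is $F_0 = E$, which satisfies $[x_1,x_2,x_3]=0$; here the relevant $p=3$ is odd. For the inductive step I would set $F_k = F_{k-1} \otimes H$; by the induction hypothesis $F_{k-1}$ satisfies $[x_1, \dots, x_{2k+1}] = 0$, so the value $p = 2k+1$ is odd. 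Part (ii) of Proposition \ref{prop_addE2} (which is where $\operatorname{char} K \neq 3$ enters) then gives that $F_k$ satisfies $[x_1, \dots, x_{p+2}] = [x_1, \dots, x_{2k+3}] = 0$, and the new exponent $2k+3$ is again odd, so the induction continues. Together with the standing requirement $\operatorname{char} K \neq 2$ for Grassmann algebras, this proves the first claim under $\operatorname{char} K \neq 2,3$.

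For the lower bound I would exhibit an explicit nonvanishing commutator of length $2k+2$, in the spirit of the computation at the end of Proposition \ref{prop_equal}; here only $\operatorname{char} K \neq 2$ is needed. Let $e_1, e_2, \dots$ denote the generators of $E$ and let $f_1^{(j)}, f_2^{(j)}$ be two generators of the $j$-th tensor factor (all of $E_2$, or a chosen pair inside $E_3$). Exploiting that $E$ has infinitely many generators, I would put
\[
a_1 = e_1 \otimes 1^{\otimes k}, \qquad a_{2k+2} = e_{2k+2} \otimes 1^{\otimes k},
\]
and, for $2 \leq i \leq 2k+1$, let $a_i$ carry the generator $e_i$ in the $E$-slot together with exactly one $f$-generator, inserting $f_1^{(1)}, f_2^{(1)}, f_1^{(2)}, f_2^{(2)}, \dots, f_1^{(k)}, f_2^{(k)}$ into positions $2,3,\dots,2k+1$ in this order. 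Since each $a_i$ contributes a single $e$-generator and at most one $f$-generator, every monomial in the expansion of $[a_1, \dots, a_{2k+2}]$ is proportional to the top element $w = e_1 \cdots e_{2k+2} \otimes f_1^{(1)} f_2^{(1)} \otimes \cdots \otimes f_1^{(k)} f_2^{(k)}$, so it suffices to show the total coefficient is nonzero.

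To compute that coefficient I would evaluate the left-normed commutator one step at a time using the multifactor identity
\[
[u_0 \otimes \cdots \otimes u_k,\, v_0 \otimes \cdots \otimes v_k] = \sum_{m=0}^{k} (v_0 u_0) \otimes \cdots \otimes (v_{m-1}u_{m-1}) \otimes [u_m, v_m] \otimes (u_{m+1}v_{m+1}) \otimes \cdots \otimes (u_k v_k),
\]
tracking the single surviving term at each stage. The mechanism is an alternation driven by the parity of the accumulated $E$-factor: at step $i = 2j$ (inserting $f_1^{(j)}$) the partial product $e_1 \cdots e_{2j-1}$ is odd, so the $m=0$ summand survives, advancing the $E$-slot and depositing $f_1^{(j)}$; at step $i = 2j+1$ (inserting $f_2^{(j)}$) the product $e_1 \cdots e_{2j}$ is even and hence central, so the $m=0$ term dies and instead the $m=j$ summand survives, contributing $[f_1^{(j)}, f_2^{(j)}] = 2 f_1^{(j)} f_2^{(j)}$ and completing the $j$-th factor while the prefix $e_{i}e_1\cdots e_{i-1}=\pm e_1\cdots e_i$ stays nonzero. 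Each of the $2k+1$ commutation steps multiplies the running scalar by $2$, so a final commutation with $a_{2k+2}$ (again an $m=0$ step, since $e_1 \cdots e_{2k+1}$ is odd) yields $[a_1, \dots, a_{2k+2}] = 2^{2k+1}\, w \neq 0$, because $\operatorname{char} K \neq 2$ and both $e_1 \cdots e_{2k+2}$ and each $f_1^{(j)} f_2^{(j)}$ are nonzero; the same elements work verbatim with $E_3$. The one delicate point — the main obstacle — is precisely this bookkeeping: one must verify that at every stage exactly one summand of the multifactor formula survives and that the $f$-generators never collide, so that the coefficient accumulates as a genuine power of $2$ rather than cancelling. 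The parity alternation above is what guarantees this; the upper bound, by contrast, is a routine induction once Proposition \ref{prop_addE2} is in hand.
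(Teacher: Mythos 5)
Your proposal is correct and takes essentially the same approach as the paper: the upper bound by iterating Proposition \ref{prop_addE2}(ii) from the base case $p=3$ (which the paper leaves implicit in the remark that the corollary follows from Proposition \ref{prop_addE2}), and the lower bound via exactly the paper's evaluation, placing $e_i$ in the $E$-slot and distributing the pairs $f_1^{(j)}, f_2^{(j)}$ over the $E_2$-factors to get $[a_1,\dots,a_{2k+2}] = 2^{2k+1}\, e_1\cdots e_{2k+2}\otimes f_1f_2\otimes\cdots\otimes f_1f_2 \neq 0$, with the $E_3$ case handled by the same elements. Your telescoping-identity bookkeeping merely makes explicit the direct computation the paper asserts, so there is nothing to correct.
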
	

\begin{proof}
	It is enough to show that $E\otimes \underbrace{E_2\otimes \cdots \otimes E_2}_{k}$ does not satisfy $[x_1, \dots, x_{2k+2}] = 0$. Let $\{1, e_1, e_2, \dots\}$ denote the generators of $E$ and let $\{1, f_1, f_2\}$ denote the generators of $E_2$. We take the following elements from $E\otimes \underbrace{E_2\otimes \cdots \otimes E_2}_{k}$:
	
	\begin{align*}
		a_1 = e_1 \otimes 1 \otimes \cdots \otimes 1; \quad a_{2k+2} = e_{2k+2} \otimes 1 \otimes \cdots \otimes 1;
\end{align*}
For $1 \leq i \leq k$,
\begin{align*}	
		&a_{2i} = e_{2i} \otimes 1 \otimes \cdots \otimes 1 \otimes f_1 \otimes 1 \otimes \cdots \otimes 1 \\
		&a_{2i+1} = e_{2i+1} \otimes 1 \otimes \cdots \otimes 1 \otimes f_2 \otimes 1 \otimes \cdots \otimes 1,
\end{align*}
where $f_1$ and $f_2$ appear at position $i+1$ in the tensor product. 

Then,
\[
[a_1, \dots, a_{2k+2}] = 2^{2k+1} e_1 \cdots e_{2k+2} \otimes f_1f_2 \otimes \cdots \otimes f_1f_2 \neq 0.
\]
\end{proof}	

Therefore, for $\operatorname{char} K \neq 2,3$ the minimal value of $q$ such that $[x_1, \dots, x_q] = 0$ is an identity for $E\otimes \underbrace{E_2\otimes \cdots \otimes E_2}_{k}$ is $q = 2k+3$. An interesting question here is what is the minimal value of $q$ when $\operatorname{char} K = 3$, and in particular whether it is again $q = 2k+3$, or it is bigger. 
	
More generally, we have the following statement.
\begin{corollary}
	Let $\operatorname{char} K \neq 2,3$. Then,
	$E\otimes E_{2k} \otimes \underbrace{E_2\otimes \cdots \otimes E_2}_{l}$  satisfies $[x_1, \dots, x_{2k+2l + 3}] = 0$. Moreover, for $\operatorname{char} K \neq 2$ it holds that $E\otimes E_{2k} \otimes \underbrace{E_2\otimes \cdots \otimes E_2}_{l}$  does not satisfy $[x_1, \dots, x_{2k+ 2l + 2}] = 0$.
\end{corollary}

\begin{proof}
	Let $\{1, e_1, e_2, \dots\}$, $\{1, f_1, \dots, f_{2k}\}$, and $\{1, g_1, g_2\}$ denote the generators respectively of $E$, $E_{2k}$, and $E_2$.
	To show that $E\otimes E_{2k} \otimes \underbrace{E_2\otimes \cdots \otimes E_2}_{l}$  does not satisfy $[x_1, \dots, x_{2k+ 2l + 2}] = 0$ we use the following elements:
	
		\begin{align*}
	&	a_1 = e_1 \otimes 1 \otimes 1 \otimes \cdots \otimes 1; \\
	&a_2 = e_2 \otimes f_1 \otimes 1 \otimes \cdots \otimes 1; \\
	&\dots \dots\\
	&a_{2k+1} = e_{2k+1} \otimes f_{2k} \otimes 1 \otimes \cdots \otimes 1.
	\end{align*}
	For $1 \leq i \leq l$,
	\begin{align*}	
		&a_{2k + 2i} = e_{2k+ 2i} \otimes 1 \otimes  1 \otimes \cdots \otimes 1 \otimes g_1 \otimes 1 \otimes \cdots \otimes 1; \\
		&a_{2k+2i+1} = e_{2k+2i+1} \otimes 1 \otimes 1 \otimes \cdots \otimes 1 \otimes g_2 \otimes 1 \otimes \cdots \otimes 1,
	\end{align*}
	where $g_1$ and $g_2$ appear at position $i+2$ in the tensor product. Finally,
	\[
	a_{2k+2l+2} = e_{2k+2l+2} \otimes 1 \otimes 1 \otimes \cdots \otimes 1.
	\]
	
	\end{proof}
	
	For the algebra $\underbrace{E_2 \otimes \cdots \otimes E_2}_{l}$ we have the following statements.
	
	\begin{lemma} \label{lemma_productE2}
		Let us denote the generators of $E_2$ by $e_1$ and $e_2$ and consider the algebra $F_l = \underbrace{E_2 \otimes \cdots \otimes E_2}_{l}$. Let $2 \leq j \leq l+1$. Then, for any elements $a_1, \dots, a_j \in F_l$, we have that if $[a_1, \dots, a_j] \neq 0$, then $[a_1, \dots, a_j]$ is a linear combination of pure tensors from $F_l$ such that in each pure tensor at least $j-1$ positions are equal to $e_1e_2$.
	\end{lemma}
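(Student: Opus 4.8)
The plan is to prove a slightly stronger, unconditional statement by induction on $j$, from which the lemma follows at once. Let $V_m \subseteq F_l$ denote the subspace spanned by those pure tensors $w_1 \otimes \cdots \otimes w_l$ (with each $w_i \in \{1, e_1, e_2, e_1 e_2\}$) in which at least $m$ of the factors equal $e_1 e_2$. I would show that for every $2 \le j \le l+1$ and all $a_1, \dots, a_j \in F_l$ one has $[a_1, \dots, a_j] \in V_{j-1}$, \emph{without} assuming the commutator is nonzero. Since the zero element lies in every $V_m$, this formulation makes the induction self-contained, and the assertion of the lemma is recovered by simply reading off the case in which the commutator happens to be nonzero.

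The computational engine is the position-wise Leibniz expansion of a commutator of pure tensors. By multilinearity of long commutators in each slot, it suffices to work with pure tensors whose entries are basis elements of $E_2$. For pure tensors $w = w_1 \otimes \cdots \otimes w_l$ and $v = v_1 \otimes \cdots \otimes v_l$ I would use
\[
[w,v] = \sum_{k=1}^{l} (v_1 w_1) \otimes \cdots \otimes (v_{k-1} w_{k-1}) \otimes [w_k, v_k] \otimes (w_{k+1} v_{k+1}) \otimes \cdots \otimes (w_l v_l),
\]
which is the image of the standard identity $a_1 \cdots a_l - b_1 \cdots b_l = \sum_k b_1 \cdots b_{k-1}(a_k - b_k) a_{k+1} \cdots a_l$ applied slotwise with $a_i = w_i v_i$ and $b_i = v_i w_i$. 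The two elementary facts about $E_2$ that drive the argument are: (a) the elements $1$ and $e_1 e_2$ are central, and $[u,v] \neq 0$ forces $u, v \in \{e_1, e_2\}$, in which case $[u,v] = \pm 2\, e_1 e_2$; and (b) the product $e_1 e_2 \cdot x$ equals $e_1 e_2$ when $x = 1$ and vanishes when $x \in \{e_1, e_2, e_1 e_2\}$.

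The base case $j = 2$ is then immediate: by (a) a summand of the telescoped sum survives only if $[w_k, v_k] = \pm 2\, e_1 e_2$, so every nonzero term has its $k$-th slot equal to $e_1 e_2$, giving $[w,v] \in V_1$. For the inductive step I would take $[a_1, \dots, a_j] \in V_{j-1}$ with $j \le l$ and, by linearity in both arguments, reduce $[[a_1, \dots, a_j], a_{j+1}]$ to computing $[w, v]$ with $w$ a pure tensor having at least $j-1$ factors equal to $e_1 e_2$ and $v$ a pure tensor. In a surviving summand with commutator at position $k$, fact (a) forces $w_k \in \{e_1, e_2\}$, so $k$ is one of the positions where $w$ was \emph{not} already $e_1 e_2$, and this slot \emph{becomes} $e_1 e_2$; meanwhile at each of the $\ge j-1$ positions $i \neq k$ with $w_i = e_1 e_2$, fact (b) forces $v_i = 1$ for a nonzero entry, so those slots remain $e_1 e_2$. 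Hence each surviving term has at least $(j-1) + 1 = j$ slots equal to $e_1 e_2$, i.e. $[w,v] \in V_j$, closing the induction.

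I expect the only delicate point to be the bookkeeping in the inductive step, precisely the claim that the $e_1 e_2$ newly created at the commutator position is distinct from those already present: this rests entirely on the parity dichotomy in (a), since a nonzero commutator factor forces an odd (hence non-$e_1 e_2$) entry at position $k$. A secondary point deserving a line of justification is that the centrality of $e_1 e_2$ makes the reordering inside the telescoping harmless, so that the factors $v_i w_i$ (for $i < k$) and $w_i v_i$ (for $i > k$) agree at every slot where $w_i = e_1 e_2$ and the two halves of the product can be analysed uniformly. With these observations the remaining counting is routine.
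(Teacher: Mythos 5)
Your proof is correct, and its skeleton coincides with the paper's: induction on $j$, tracking the positions equal to $e_1e_2$, powered by the same two facts about $E_2$ (a nonzero commutator of basis monomials is $[e_1,e_2]=\pm 2\,e_1e_2$, and $e_1e_2\cdot x\neq 0$ forces $x=1$). Where you genuinely differ is in the computational engine and the formulation. The paper commutes two pure tensors head-on: implicitly using that basis pure tensors either commute or anticommute (so $[u,a_j]$ is again a scalar multiple of a pure tensor), it locates a position $i$, necessarily distinct from the existing $e_1e_2$ positions because $e_1e_2$ is central, at which the entries anticommute and the product acquires a factor $e_1e_2$. You replace this with the slot-wise telescoping expansion $[w,v]=\sum_{k=1}^{l}(v_1w_1)\otimes\cdots\otimes(v_{k-1}w_{k-1})\otimes[w_k,v_k]\otimes(w_{k+1}v_{k+1})\otimes\cdots\otimes(w_lv_l)$ and prove the stronger, unconditional membership $[a_1,\dots,a_j]\in V_{j-1}$. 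This buys rigor exactly at the two places the paper is terse: in the base case (the paper asserts, without the parity argument, the existence of a position where both entries differ from $1$ and from each other), and in the persistence of the old $e_1e_2$ slots (the paper never states explicitly that a nonzero term forces the entry of $a_j$ to be $1$ at those positions --- your fact (b)). Your formulation also avoids any appeal to the fact that a commutator of pure tensors is again pure, and, run one step further ($j=l+2$, where $V_{l+1}=0$ since a pure tensor has only $l$ slots), it delivers Proposition \ref{prop_productE2} with no additional work, whereas the paper derives that proposition separately from the $j=l+1$ case. The trade-off is simply length: the paper's argument is shorter, yours is more self-contained and transplants more easily to similar tensor-product computations.
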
	
	
	\begin{proof}
		We use induction on $j$. First we consider the commutator $[a_1, a_2]$, where $a_1$ and $a_2$ are arbitrary pure tensors from  $F_l$. Let us denote the element from $E_2$ which appears at position $i$ in $a_1$ by $a_1[i]$, and similarly for $a_2$. If $[a_1, a_2] \neq 0$, then there exists an integer $i$ between $1$ and $l$ such that $a_1[i] \neq a_2[i]$, $a_1[i] \neq 1$, and $a_2[i]\neq 1$. Then, $(a_1a_2)[i] = k e_1e_2$ for some nonzero constant $k$. This proves the statement for $j=2$. 
		
		Assume that the statement holds for $j-1$ and take arbitrary pure tensors $a_1, \dots, a_{j-1}, a_j \in F_l$ such that $[a_1, \dots, a_j] \neq 0$. Then, for each pure tensor $u$ from $[a_1, \dots, a_{j-1}]$ there exist positions $i_1, \dots, i_{j-2}$ such that $u[i_1] = \dots = u[i_{j-2}] = e_1e_2$. Now we consider the commutator $[u, a_j]$. If $[u, a_j] \neq 0$, there exists $i \neq i_1, \dots, i_{j-2}$ such that $u[i] \neq 1$, $a_j[i] \neq 1$, and $u[i] \neq a_j[i]$. Hence, as before, $(ua_j)[i] = k e_1 e_2$ for some nonzero constant $k$. This proves the statement.
	\end{proof}	
	
	\begin{proposition} \label{prop_productE2} The algebra  $F_l = \underbrace{E_2 \otimes \cdots \otimes E_2}_{l}$ satisfies the identity $[x_1, \dots, x_{l+2}]$.
	\end{proposition}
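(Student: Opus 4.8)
The plan is to deduce the statement almost immediately from Lemma \ref{lemma_productE2}, together with the fact that $e_1e_2$ is central in $E_2$. First I would use multilinearity of the long commutator $[x_1,\dots,x_{l+2}]$ to reduce to checking $[a_1,\dots,a_{l+2}]=0$ only on pure tensors $a_1,\dots,a_{l+2}\in F_l$; so I fix such pure tensors and concentrate on the inner commutator $[a_1,\dots,a_{l+1}]$.

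Next I would apply Lemma \ref{lemma_productE2} with $j=l+1$, the largest value permitted by the range $2\le j\le l+1$. If $[a_1,\dots,a_{l+1}]=0$ there is nothing to prove, so assume it is nonzero. The lemma then says it is a linear combination of pure tensors, each having at least $j-1=l$ positions equal to $e_1e_2$. Since $F_l$ has exactly $l$ tensor factors, this forces \emph{every} position of each such pure tensor to equal $e_1e_2$. Hence $[a_1,\dots,a_{l+1}]$ is a scalar multiple of $e_1e_2\otimes\cdots\otimes e_1e_2$.

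Finally I would observe that $e_1e_2$ lies in the center of $E_2$: using $e_1^2=e_2^2=0$ and $e_2e_1=-e_1e_2$ one checks $e_1(e_1e_2)=0=(e_1e_2)e_1$, and likewise $e_2(e_1e_2)=0=(e_1e_2)e_2$, so $e_1e_2$ commutes with the generators and therefore with all of $E_2$. Consequently $e_1e_2\otimes\cdots\otimes e_1e_2$ is central in $F_l=E_2\otimes\cdots\otimes E_2$, and so the scalar multiple $[a_1,\dots,a_{l+1}]$ is central as well. Commuting it with $a_{l+2}$ then gives
\[
[a_1,\dots,a_{l+2}]=\big[[a_1,\dots,a_{l+1}],\,a_{l+2}\big]=0,
\]
and since this holds for all pure tensors, $F_l$ satisfies $[x_1,\dots,x_{l+2}]=0$.

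I do not expect a genuine obstacle here: Lemma \ref{lemma_productE2} carries all the combinatorial weight, and the decisive feature is simply that the bound $j-1$ becomes sharp at $j=l+1$, saturating all $l$ coordinates with $e_1e_2$. The only step meriting care is the verification that $e_1e_2$ is central in $E_2$, which I would record explicitly as above before invoking it.
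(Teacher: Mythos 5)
Your proposal is correct and follows essentially the same route as the paper: both apply Lemma \ref{lemma_productE2} at the extreme value $j=l+1$, where the bound of $j-1=l$ positions saturates all $l$ tensor factors, so that $[a_1,\dots,a_{l+1}]$ collapses to a scalar multiple of $e_1e_2\otimes\cdots\otimes e_1e_2$, which commutes with everything and kills the outer commutator. The only difference is cosmetic: you spell out the centrality of $e_1e_2$ and the multilinearity reduction to pure tensors (the latter is in fact unnecessary, since the lemma is stated for arbitrary elements of $F_l$), both of which the paper leaves implicit.
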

	\begin{proof}
		
		By Lemma \ref{lemma_productE2}, for arbitrary pure tensors $a_1, \dots, a_{l+1} \in F_l$, it holds that
		\[
		[a_1, \dots, a_{l+1}] = k(e_1e_2\otimes \dots \otimes e_1e_2),
		\]	
		
		for some constant $k$. Then, for any element $a \in F_l$, $[a_1, \dots, a_{l+1}, a] = 0$.	
	\end{proof}	
	
	In the end of this section, we show that if $F = E \otimes E_{i_1} \otimes \cdots \otimes E_{i_s}$, then the minimal value of $q$ for which $[x_1, \dots, x_{q+1}]$ is an identity for $F$ is necessarily even.
	
	\begin{proposition} \label{prop_oddq}
		Let $\operatorname{char} K \neq 2,3$ and let $F = E \otimes E_{i_1} \otimes \cdots \otimes E_{i_s}$ for $s \geq 0$ and any choice of integers $i_1, \dots, i_s$ greater or equal to $2$. If $F$ does not satisfy the identity $[x_1, \dots, x_{2k+1}]$ for some positive integer $k$, then $F$ does not satisfy the identity $[x_1, \dots, x_{2k+2}]$ either.
	\end{proposition}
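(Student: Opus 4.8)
The plan is to prove the statement in contrapositive form, as an explicit extension procedure: starting from a nonzero commutator $[a_1,\dots,a_{2k+1}]$ in $F$, I will manufacture a nonzero commutator of length $2k+2$. The whole argument rests on the fact that the factor $E$ is generated by \emph{infinitely} many anticommuting generators, so that given any finite list of elements of $F$ there is always a generator $e$ of $E$ occurring in none of them. I call such an $e$ a fresh generator, and the basic device is the operator $\operatorname{ad}_{e\otimes 1}$.

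First I would reduce to bihomogeneous data. Writing $F=E\otimes F'$ with $F'=E_{i_1}\otimes\cdots\otimes E_{i_s}$, the algebra $F$ is $\mathbb{Z}_2$-graded by the parity $\deg_E$ of the $E$-component, and by multilinearity of the commutator one may assume each $a_i=g_i\otimes h_i$ with $g_i$ homogeneous of $E$-parity $\alpha_i\in\{0,1\}$. Two computations drive everything. (a) The $E$-parity of $[a_1,\dots,a_n]$ equals $\alpha_1+\cdots+\alpha_n \bmod 2$. (b) For a fresh generator $e$ and a homogeneous $u\in E$ one has $[u\otimes v,\,e\otimes 1]=((-1)^{\deg_E u}-1)(eu)\otimes v$, which vanishes when $u$ is $E$-even and equals $\pm 2\,(eu)\otimes v\neq 0$ when $u$ is $E$-odd; moreover left multiplication by a fresh $e$ is injective. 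Consequently, for any $w\in F$, the commutator $[w,\,e\otimes 1]$ is nonzero precisely when $w$ has a nonzero $E$-odd component.

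The argument then splits on the parity of the witness. If $F$ admits a nonzero bihomogeneous commutator $w=[a_1,\dots,a_{2k+1}]$ of \emph{odd} $E$-parity, I simply append a fresh generator: $[a_1,\dots,a_{2k+1},\,e\otimes 1]=[w,\,e\otimes 1]\neq 0$ by (b), giving the desired length-$(2k+2)$ violator. Suppose instead every nonzero witness has even $E$-parity. Since the length $2k+1$ is odd, an even number of the $\alpha_i$ equal $1$, so at least one argument is $E$-even. The plan is to flip the total parity to odd by multiplying the $E$-part of one $E$-even argument by a fresh generator, thereby reducing to the odd case. Concretely, if the $E$-even argument sits in the outermost slot, say $\alpha_{2k+1}=0$, then with $P=[a_1,\dots,a_{2k}]$ (of even $E$-parity $\beta$) the element $e\otimes 1$ commutes past $P$ without sign, and one computes $[a_1,\dots,a_{2k},\,(e\otimes 1)a_{2k+1}]=(e\otimes 1)\,w$, which is nonzero of odd $E$-parity; appending a second fresh generator finishes the construction.

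The main obstacle is exactly the even-parity case, and more precisely the requirement that the $E$-even argument occupy the \emph{outermost} position. The clean pass-through identity $\operatorname{ad}_{a_i}((e\otimes 1)y)=(e\otimes 1)\operatorname{ad}_{a_i}(y)$ holds only when $a_i$ is $E$-even; pulling the odd element $e\otimes 1$ out of an inner slot across an $E$-odd argument turns a commutator into an anticommutator, whose nonvanishing is not guaranteed. Thus the crux is to show that, whenever $F$ violates $[x_1,\dots,x_{2k+1}]=0$, there is a witness whose last argument is $E$-even. I would attack this either by a normal-form/reordering argument for left-normed commutators (swapping adjacent arguments changes $[\dots,a_j,a_{j+1},\dots]$ by the lower-complexity term $[\dots,[a_j,a_{j+1}],\dots]$, which can be absorbed by induction on the number of arguments), or by running in parallel the auxiliary move $[w,\,e\otimes f]=(e\otimes 1)\sum_m u_m\otimes[v_m,f]$, which already succeeds unless the $F'$-parts of the witness are all central in $F'$; these two moves together should cover all configurations, the central case being precisely the one (as in the witness of Proposition \ref{prop_equal}) where an $E$-even outer argument is available.
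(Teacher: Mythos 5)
There is a genuine gap, and you locate it yourself: the even-$E$-parity case is never closed. Your fresh-generator device is sound — the computation $[w,e\otimes 1]=-2(e\otimes 1)w_{\mathrm{odd}}$ correctly settles the odd-parity case, and your pass-through identity correctly handles an $E$-even argument \emph{in the outermost slot} — but the crux, that a nonzero witness with $E$-even last argument exists (or that the residual configuration is impossible), is only asserted. Neither of your two proposed repairs is carried out, and neither obviously works: swapping adjacent slots replaces the witness by one of the same length plus a left-normed commutator of length $2k$ with a composite entry, and a nonzero value of that \emph{shorter} commutator is not a witness against $[x_1,\dots,x_{2k+1}]$, so there is no induction of the right shape to "absorb" the correction terms; and after your auxiliary move $[w,e\otimes f]=(e\otimes 1)\sum_m u_m\otimes[v_m,f]$ you are left with the case where all $F'$-components of $w$ are central in $F'$, for which the claim that "an $E$-even outer argument is available" is an analogy with Proposition \ref{prop_equal}, not an argument. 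As written, "these two moves together should cover all configurations" is a conjecture.

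The paper sidesteps this case analysis entirely, and the mechanism it uses explains the hypothesis $\operatorname{char}K\neq 3$, which your proof never invokes — a sign you are attacking a strictly stronger statement than needed. One does not need a nonzero left-normed commutator of length $2k+2$; it suffices to find a nonzero evaluation on $F$ of \emph{any} element of the T-ideal $I_{2k+2}$. The paper takes the witness $[a_1,\dots,a_{2k+1}]=\sum_j u_{j0}\otimes u_{j1}\otimes\cdots\otimes u_{js}$, picks two fresh generators $e_{r1},e_{r2}$ of $E$, sets $b_i=e_{ri}\otimes 1\otimes\cdots\otimes 1$, and multiplies: $[a_1,\dots,a_{2k+1}][b_1,b_2]=\sum_j 2\,u_{j0}e_{r1}e_{r2}\otimes u_{j1}\otimes\cdots\otimes u_{js}\neq 0$, freshness preventing cancellation (this is exactly your freshness idea, used once, with no parity bookkeeping). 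Then Theorem \ref{thm_prod_odd} — valid for $\operatorname{char}K\neq 3$, applicable because $2k+1$ is odd — gives $I_{2k+1}I_2\subset I_{2k+2}$, so this nonzero element is a value of a consequence of $[x_1,\dots,x_{2k+2}]$, which therefore cannot be an identity of $F$. In short: you rediscovered the first half of the paper's proof (fresh generators), but the missing second half is the product-of-T-ideals theorem, not a finer combinatorial analysis of witnesses; to salvage your constructive route you would have to actually prove the outermost-even-argument claim, which the paper never needs.
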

	
	\begin{proof}
		Let $a_1, \dots, a_{2k+1}$ be elements from $F$ such that $[a_1, \dots, a_{2k+1}] \neq 0$. We write the expression $[a_1, \dots, a_{2k+1}]$ as a finite sum of pure tensors of the form
		\[
		[a_1, \dots, a_{2k+1}]	= \sum_j u_{j0} \otimes u_{j1} \otimes \cdots \otimes u_{js},	
		\]
		where $u_{j0} \in E$ and for each $l\geq 1$, $u_{jl} \in E_{i_l}$. Moreover, the elements $u_{j1} \otimes \cdots \otimes u_{js}$ are different for different values of $j$. Let us denote as usual the generators of $E$ by $e_1, e_2, \dots, e_n, \dots$. For each $j$, the element $u_{j0}$ is a linear combination of finitely many generators. We choose elements $e_{r1} \neq e_{r2}$ such that they do not appear in $u_{j0}$ for any $j$ and set $b_1 = e_{r1} \otimes 1 \otimes \cdots \otimes 1 \in F$ and $b_2 = e_{r2} \otimes 1 \otimes \cdots \otimes 1 \in F$. Then we claim that
		\begin{align} \label{eq_product_ab}
			[a_1, \dots, a_{2k+1}][b_1, b_2] \neq 0.	
		\end{align}
		Indeed, we have that
		\begin{align*}
			&	[a_1, \dots, a_{2k+1}][b_1, b_2] = 
			\sum_j u_{j0}[e_{r1}, e_{r2}] \otimes u_{j1} \otimes \cdots \otimes u_{js} = \\
			&\sum_j 2u_{j0}e_{r1} e_{r2} \otimes u_{j1} \otimes \cdots \otimes u_{js}.	
		\end{align*}
		
		Due to the choice of $e_{r1}$ and $e_{r2}$, for every $j$, we have that $u_{j0}e_{r1} e_{r2} \neq 0$. Hence (\ref{eq_product_ab}) holds. 
		
		Furthermore, by Theorem \ref{thm_prod_odd}, $[a_1, \dots, a_{2k+1}][b_1, b_2] \in I_{2k+2}$ and thus it is a consequence of the long commutator $[x_1, \dots, x_{2k+2}]$. This completes the proof of the statement.
	\end{proof}

Below we give an example of a tensor product of Lie nilpotent associative algebras that are not Grassmann algebras. 

\begin{example} Let $k \geq 3$ and let $K$ be an infinite field with $\operatorname{char} K \neq 3$. Let $J = \sum_{i=1}^{k-1} e_{i, i+1}$ where $e_{i,j}$ denotes the $k\times k$ matrix with $i,j$-th entry equal to $1$ and all other entries equal to $0$. Let $E = E_{k \times k}$ denote the identity matrix. In \cite{GiMP}, the authors define the algebras $N_k$ in the following way:
	\[
	N_k = \operatorname{span} \{E, J, J^2, \dots, J^{k-2}, e_{12}, e_{13}, \dots, e_{1k} \}.
	\] 
	
In the same paper, among the other important properies of the algebras $N_k$, it is shown that a basis for the identities of $N_k$ is given by the polynomials 
\begin{align*}
	[x_1, \dots, x_k] \text{ and } [x_1, x_2][x_3, x_4].
\end{align*}

Hence, Proposition \ref{prop_addE2} implies that the tensor product $N_k \otimes \underbrace{N_3 \otimes \cdots \otimes N_3}_{l}$ is a Lie nilpotent associative algebra and satisfies the identity $[x_1, \dots, x_{k+2l}]$ if $k$ is odd and $[x_1, \dots, x_{k+2l-1}]$ if $k$ is even. 
\end{example}	
	
\begin{remark}
	In \cite{D}, Drensky conjectures that over a field of characteristic zero, for any value of $p$, the variety $\mathfrak{N}_p$ is generated by a finite set of algebras of the type $E \otimes E_{i_1} \otimes \cdots \otimes E_{i_s}$ and $ E_{j_1} \otimes \cdots \otimes E_{j_r}$. Furthermore, in \cite{HdM}, the author and de Mello make the conjecture that the varieties $\mathfrak{N}_{2k}$ and $\mathfrak{N}_{2k+1}$ are asymptotically equivalent. The two conjectures are still open, but it follows from Proposition \ref{prop_oddq} that a positive answer to Drensky's conjecture would imply a positive answer to the conjecture of the author and de Mello.	
\end{remark}

	\section{Some concrete modules in the $S_n$-module structure of $\Gamma_n(\mathfrak{N}_p)$ for arbitrary $n$ and $p$} \label{sec_Sn_structure}
	
	In this section, we take $K$ to be a field of characteristic zero. We start by introducing the necessary definitions coming from the theory of algebras with polynomial identities. Then we describe concrete irreducible $S_n$-modules that appear with nonzero multiplicity in the decomposition of the subspace of proper multilinear polynomials of degree $n$ in $F_n(\mathfrak{N}_p)$. As a consequence we obtain lower bounds on the dimensions of the space of proper multilinear polynomials and the space of multilinear polynomials of degree $n$ in $F_n(\mathfrak{N}_p)$.
	
	\subsection{Preliminaries}
	 We say that a commutator $[x_{i_1}, \dots, x_{i_r}] \in K\langle X \rangle$ is {\it pure} if all the entries in the commutator are elements of the set $X$. A polynomial in $K\langle X \rangle$ is called {\it proper} if it is a linear combination of products of pure commutators. One important property of proper polynomials is that all identities of an algebra with a unit follow from its proper ones (\cite{D2}, Proposition 4.3.3). We recall that by $P_n$ we denote the vector space of multilinear polynomials of degree $n$ in  $K\langle X_n \rangle$ and by $\Gamma_n$ the subspace of $P_n$ of proper polynomials. To any polynomial from $K \left \langle X \right\rangle$ one can associate its complete multilinearization,
	 which is a polynomial in $P_n$. It is well known that for an algebra over a field of characteristic zero, a polynomial is an identity if and only if its complete multilinearization is also an identity. Hence, in the study of polynomial identities for unital algebras over fields of characteristic zero, we can restrict ourselves to proper multilinear polynomials.

For an algebra $A$, we denote by $T(A)$ the set of all polynomial identities of $A$. $T(A)$ is an ideal in $K \left\langle X \right\rangle$ that has the additional property that it is closed under all $K$-algebra endomorphisms of $A$. Such ideals are called {\it T-ideals}. We also take the natural quotient map
\begin{align} \label{eq_quotient_map}
K \left\langle X_n \right\rangle \rightarrow K \left\langle X_n \right\rangle/(K \left\langle X_n \right\rangle \cap T(A))
\end{align}
and define $P_n(A)$ (respectively, $\Gamma_n(A)$) to be the image of $P_n$ (resp., $\Gamma_n$) under this map.
Then the dimensions
\[
c_n(A) = \dim P_n(A) \quad \text{ and } \quad \gamma_n(A) = \dim \Gamma_n(A)
\]
are called respectively the $n$-th \textit{codimension} and the $n$-th \textit{proper codimension} of $A$ (or of $T(A)$).

The following relations between $c_n(A)$ and $\gamma_n(A)$ are well-known.

\begin{proposition}[\cite{D}] \label{prop_codim_propcodim}
	Let $A$ be a PI-algebra.
	\begin{itemize}
		\item [(i)] The codimension sequence $c_n(A)$ and the proper codimension sequence $\gamma_n(A)$ are related by the equation
		\[
		c_n(A) = \sum_{l = 0}^n \binom{n}{l}\gamma_l(A).
		\]
		\item[(ii)] The codimension series $c(A,t) = \sum_n c_n(A)t^n$ and the proper codimension series $\gamma(A,t) = \sum_n \gamma_n(A) t^n$ satisfy the equation
		\[
		c(A,t) = \frac{1}{1-t}\gamma\left (A, \frac{t}{1-t}\right).
		\]
	\end{itemize}
\end{proposition}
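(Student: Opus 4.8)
The plan is to deduce part (ii) from part (i) by a formal generating-function manipulation, and to obtain part (i) from the normal form for multilinear polynomials together with the fact, quoted above, that all identities of a unital algebra follow from its proper ones. First I would recall the structural decomposition of the multilinear polynomials in the free algebra: over a field of characteristic zero every $f \in P_n$ is uniquely a linear combination of products $x_{i_1}\cdots x_{i_{n-l}}\, w$ with $i_1 < \cdots < i_{n-l}$ and $w$ a proper multilinear polynomial in the complementary $l$ variables. Grouping these according to the set $S \subseteq \{1,\dots,n\}$ of variables occurring inside the proper factor $w$ yields a direct sum $P_n = \bigoplus_{l=0}^n \bigoplus_{|S|=l} V_S$, where $V_S$ is spanned by such products whose proper part lies in the variables of $S$. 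Since left multiplication by the fixed increasing monomial is a bijection of the space $\Gamma_S$ of proper multilinear polynomials in the variables of $S$ onto $V_S$, and $\Gamma_S \cong \Gamma_l$ after relabelling, we get $\dim V_S = \dim\Gamma_l$ and hence $\dim P_n = \sum_{l=0}^n \binom{n}{l}\dim\Gamma_l$ in the free algebra. The content of (i) is that this survives the passage to the quotient by $T(A)$.

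Writing $\pi$ for the quotient map (\ref{eq_quotient_map}), the core step is to show that the decomposition descends, i.e.
\[
P_n(A) = \bigoplus_{l=0}^n \bigoplus_{|S|=l} \pi(V_S), \qquad \pi(V_S) \cong \Gamma_{l}(A).
\]
The isomorphism $\pi(V_S)\cong \Gamma_l(A)$ is the easy half: for proper $b$ one has $x_{i_1}\cdots x_{i_{n-l}}\,b \in T(A) \iff b\in T(A)$, the forward direction by evaluating the external variables at $1$ (legitimate since $A$ is unital) and the reverse because $T(A)$ is an ideal; hence $\pi(V_S)\cong \Gamma_S/(\Gamma_S\cap T(A))=\Gamma_S(A)\cong\Gamma_l(A)$. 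That these summands exhaust $P_n(A)$ is exactly the statement that every identity follows from the proper ones (\cite{D2}, Proposition 4.3.3). The delicate point, and the main obstacle, is the directness of the sum, equivalently that $f=\sum_S g_S\in T(A)$ with $g_S\in V_S$ forces each $g_S\in T(A)$. Here I would exploit unitality twice. Because a proper polynomial is a combination of commutators and commutators annihilate scalars, the substitution $x_i\mapsto x_i+\alpha_i\cdot 1$ with $\alpha_i\in K$ leaves the proper factors unchanged and only shifts the external monomials; separating multihomogeneous components in the $\alpha_i$ (valid as $K$ is infinite) isolates identities by external degree. Combining this with the evaluation of the external variables of a fixed $g_{S_0}$ at $1$, which sends $g_{S_0}$ to its proper factor while killing every $g_S$ whose internal variable set is not contained in $S_0$, separates off the component $g_{S_0}$ and shows it is an identity. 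Granting directness, summing $\dim\pi(V_S)=\gamma_l(A)$ over the $\binom{n}{l}$ subsets of each size $l$ gives $c_n(A)=\dim P_n(A)=\sum_{l=0}^n\binom{n}{l}\gamma_l(A)$, which is (i).

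Part (ii) is then purely formal. Substituting the formula for $c_n(A)$ into $c(A,t)=\sum_n c_n(A)t^n$, interchanging the order of summation, and using $\sum_{n\geq l}\binom{n}{l}t^n = t^l/(1-t)^{l+1}$, I obtain
\[
c(A,t)=\sum_{l\geq 0}\gamma_l(A)\frac{t^l}{(1-t)^{l+1}}=\frac{1}{1-t}\sum_{l\geq 0}\gamma_l(A)\left(\frac{t}{1-t}\right)^l=\frac{1}{1-t}\,\gamma\!\left(A,\frac{t}{1-t}\right),
\]
which is the asserted relation. I expect no difficulty beyond bookkeeping in this last step; all the mathematical weight lies in the descent and directness argument of part (i).
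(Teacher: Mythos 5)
The paper offers no proof of this proposition: it is quoted verbatim from Drensky \cite{D} (see also Proposition 4.3.3 and the surrounding material in \cite{D2}), so there is no internal argument to compare against. Your proof is, in substance, exactly the standard one from those sources, and it is correct: the decomposition $P_n=\bigoplus_S V_S$ via the PBW-type basis (ordered external monomial times proper multilinear part), the identification $\pi(V_S)\cong\Gamma_l(A)$ using unitality in one direction and the T-ideal property in the other, and the purely formal derivation of (ii) from (i) via $\sum_{n\geq l}\binom{n}{l}t^n=t^l/(1-t)^{l+1}$ are all as in \cite{D2}. Two small points deserve correction or completion. First, you misattribute the role of ``all identities follow from proper ones'': exhaustion of $P_n(A)$ by the $\pi(V_S)$ is automatic, since $\pi$ is surjective and $P_n=\sum_S V_S$ already holds in the free algebra; the entire content of the descent is the directness, i.e.\ $P_n\cap T(A)=\bigoplus_S\bigl(V_S\cap T(A)\bigr)$, which is what that proposition of \cite{D2} really encodes. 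Second, your directness argument is left one step short: evaluating the external variables of $g_{S_0}$ at $1$ kills the components with $S\not\subseteq S_0$ and sends $g_{S_0}$ to its proper factor $w_{S_0}$, but the components with $S\subsetneq S_0$ survive as $x_{(S_0\setminus S)}w_S$, so one obtains only the partial sums $\sum_{S\subseteq S_0}x_{(S_0\setminus S)}w_S\in T(A)$ (your shift $x_i\mapsto x_i+\alpha_i$ plus multihomogeneous separation yields these same partial sums, so it does not by itself do more). The clean finish is an induction on $\lvert S_0\rvert$: for $S_0=\emptyset$ one gets a scalar identity, hence zero; for the inductive step, each $w_S$ with $S\subsetneq S_0$ already lies in $T(A)$, hence so does $x_{(S_0\setminus S)}w_S$, and subtracting gives $w_{S_0}\in T(A)$. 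This is routine bookkeeping, not a genuine gap, but a complete writeup should include it; note also that the argument uses $K$ infinite (the paper's characteristic-zero setting) and $A$ unital.
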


When $A$ is such that $T(A) = I_{p+1}$, then Equation (\ref{eq_quotient_map}) has the form
\[
K \left\langle X_n \right\rangle \rightarrow F_n(\mathfrak{N}_p) = K \left\langle X_n \right\rangle/(K \left\langle X_n \right\rangle \cap I_{p+1}).
\]
In this case, we use the notations $P_n(A) = P_n(\mathfrak{N}_p)$ and $\Gamma_n(A) = \Gamma_n(\mathfrak{N}_p)$ and similarly for the codimensions $c_n(A) = c_n(\mathfrak{N}_p)$ and $\gamma_n(A) = \gamma_n(\mathfrak{N}_p)$. 
	
	\subsection{Submodules of $\Gamma_n(\mathfrak{N}_p)$ corresponding to the partitions $(p-1, 1)$ and $(p-1, p-1)$} \label{subsec_part1}
	
	It is shown in \cite{H}, that if $M(\lambda)$ is an irreducible $S_n$-submodule of $\Gamma_n(\mathfrak{N}_{p})$ corresponding to a partition $\lambda = (\lambda_1, \dots, \lambda_k)$ with $|\lambda| = n$, then $\lambda_1 \leq p-1$. This bound is sharp, since the linearization of $[x_2, \underbrace{x_1, \dots, x_1}_{p-1}]$ generates an irreducible $S_{p}$-submodule of $\Gamma_p(\mathfrak{N}_{p})$ that corresponds to the partition $(p-1, 1)$. In this section, we prove that for any $p$ there is also a submodule of $\Gamma_{2p-2}(\mathfrak{N}_{p})$ that corresponds to the partition $(p-1, p-1)$. More precisely, we have the following proposition.
	
	\begin{proposition} \label{prop_module}
		The linearization of the polynomial $[x_1, x_2]^{p-1}$ generates an irreducible $S_{2p-2}$-submodule of $\Gamma_{2p-2}(\mathfrak{N}_{p})$ that corresponds to the partition $(p-1,p-1)$.
	\end{proposition}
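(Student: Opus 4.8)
The plan is to exhibit $w:=[x_1,x_2]^{p-1}$ as a nonzero highest weight vector of weight $(p-1,p-1)$ and then appeal to the standard dictionary between highest weight vectors in a relatively free algebra and the irreducible constituents of its cocharacter.

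First I would record the formal properties of $w$. It is a product of $p-1$ pure commutators, hence a \emph{proper} multihomogeneous polynomial of multidegree $(p-1,p-1)$, so its complete multilinearization lies in $\Gamma_{2p-2}$. For the natural $\GL_2$-action on $K\langle x_1,x_2\rangle$, the raising operator $x_1\,\partial/\partial x_2$ is a derivation sending $[x_1,x_2]$ to $[x_1,x_1]=0$; being a derivation it therefore annihilates $w$, so $w$ is a highest weight vector of weight $(p-1,p-1)$. Since $(p-1,p-1)$ is a partition and $\operatorname{char}K=0$, as soon as $w\neq 0$ modulo $I_{p+1}$ the $\GL_2$-submodule it generates in the relatively free algebra is the irreducible Schur module of shape $(p-1,p-1)$ (the category of polynomial $\GL$-modules is semisimple, so a single highest weight vector of partition weight spans one irreducible copy). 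By the correspondence between the $\GL$-character of a relatively free algebra and the $S_n$-cocharacter (see \cite{D2}), this forces the complete multilinearization of $w$ to generate an irreducible $S_{2p-2}$-submodule of $\Gamma_{2p-2}(\mathfrak{N}_p)$ isomorphic to $M((p-1,p-1))$.

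The crux is thus to prove that $w\notin I_{p+1}$, i.e.\ that $w$ is not an identity of $\mathfrak{N}_p$. Here the naive test algebra fails: $E\in\mathfrak{N}_2\subseteq\mathfrak{N}_p$, but a short permanent computation shows that the multilinearization of $w$ vanishes on $E$ for $p\geq 3$, so $E$ cannot detect $w$. Instead I would evaluate $w$ in $F_{p-1}=\underbrace{E_2\otimes\cdots\otimes E_2}_{p-1}$, which lies in $\mathfrak{N}_p$ by Proposition \ref{prop_productE2}. Writing $e_1^{(i)},e_2^{(i)}$ for the generators of the $i$-th factor and substituting $x_1=\sum_{i=1}^{p-1}e_1^{(i)}$ and $x_2=\sum_{i=1}^{p-1}e_2^{(i)}$, one uses that generators in distinct slots commute and that $[e_1^{(i)},e_2^{(i)}]=2e_1^{(i)}e_2^{(i)}$ is central to get $[x_1,x_2]=2\sum_{i}s_i$, where $s_i$ carries $e_1e_2$ in slot $i$ and $1$ elsewhere. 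The $s_i$ are central, involve pairwise distinct slots, and satisfy $s_i^2=0$, whence $[x_1,x_2]^{p-1}=2^{p-1}(p-1)!\,(e_1e_2\otimes\cdots\otimes e_1e_2)\neq 0$ in characteristic zero. Consequently $w\notin T(F_{p-1})\supseteq I_{p+1}$, so $w\neq 0$ in $F_{2p-2}(\mathfrak{N}_p)$, and since multilinearization preserves non-identities in characteristic zero, its multilinearization is nonzero as well.

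I expect the main obstacle to be exactly this non-vanishing step: one must find a test algebra inside $\mathfrak{N}_p$ large enough to witness $w$, and the role of $E_2^{\otimes(p-1)}$ is that the central elements $s_i$ multiply without the sign cancellations that kill $w$ on the Grassmann algebra. Once non-vanishing is in hand, the identification of the module is routine; the only points needing care are that $w$ is genuinely proper (so that the image lands in $\Gamma_{2p-2}(\mathfrak{N}_p)$) and that, in characteristic zero, a single highest weight vector of partition weight generates one irreducible summand.
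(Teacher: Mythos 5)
Your proposal is correct and follows essentially the same route as the paper: the crux in both is the same evaluation $x_s=\sum_{i=1}^{p-1}e_s^{(i)}$ in the test algebra $\underbrace{E_2\otimes\cdots\otimes E_2}_{p-1}$, which lies in $\mathfrak{N}_p$ by Proposition \ref{prop_productE2}, yielding a nonzero multiple of $e_1e_2\otimes\cdots\otimes e_1e_2$ and hence $[x_1,x_2]^{p-1}\notin I_{p+1}$. Your additional verification that $w$ is a highest weight vector annihilated by $x_1\,\partial/\partial x_2$ (and your observation that $E$ itself cannot detect $w$) simply makes explicit the standard $\GL$--$S_n$ correspondence that the paper leaves implicit, so there is no substantive difference in approach.
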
 
	
	\begin{proof}
		We will show that the polynomial $u = [x_1, x_2]^{p-1}$ is not an identity for $\underbrace{E_2\otimes \cdots \otimes E_2}_{p-1}$. Hence, by Proposition \ref{prop_productE2} it will follow that $u$ is not an element of $I_{p+1}$ and hence it is not an identity for $F(\mathfrak{N}_{p})$.
		
		We set $x_s = \sum_{i = 1}^{p-1} 1 \otimes \cdots \otimes 1 \otimes e_s \otimes 1 \otimes \cdots 1$, where $e_s$ appears at position $i$ in the tensor product and $s = 1,2$. Then,
		\[
		u = \underbrace{e_1e_2 \otimes \dots \otimes e_1e_2}_{p-1} \neq 0.
		\]
		Hence, $u$ is not an identity for $\underbrace{E_2\otimes \cdots \otimes E_2}_{p-1}$. 
	\end{proof}
	
By the hook formula,
	\[
	\dim M(p-1, p-1) = \frac{1}{p-1}\binom{2p-2}{p}.
	\]
	
	\subsection{Submodules coming from $E\otimes E_{2k-2}$ and $E_{2k}\otimes E_{2k}$} \label{subsec_E}
	We recall that the algebra $E\otimes E_{2k-2}$ satisfies the identity $[x_1, \dots, x_{2k+1}]$ (see \cite{DeK}) and the algebra $E_{2k} \otimes E_{2k}$ satisfies the identity $[x_1, \dots, x_{2k+2}]$ (Proposition \ref{prop_equal}). In \cite{DiD}, the authors determine the $S_n$-module structure of the algebras $E\otimes E_{2l}$ and $E_{2m} \otimes E_{2l}$. In particular, they prove the following theorems.
	
	\begin{theorem} [\cite{DiD}] \label{thm_DiD}
		Let $l \geq 1$. Then,
		\[ 	
		\Gamma_n(E\otimes E_{2l}) = \sum M(a+2, 2^b, 1^c) + \varepsilon_n M(1^n),
		\]
		where the sum is over all partitions $(a+2, 2^b, 1^c)$ of $n$ such that $a \geq 0$, $b+c > 0$, and $a + b + 1 \leq 2l$; $\varepsilon_n = 1$ for even $n$ and $\varepsilon_n = 0$ for odd $n$.
	\end{theorem}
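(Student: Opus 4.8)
The plan is to pin down the multiplicities in $\Gamma_n(E\otimes E_{2l})=\bigoplus_\lambda m_\lambda M(\lambda)$ by trapping each $m_\lambda$ between an upper bound coming from the identities of $E\otimes E_{2l}$ and a lower bound coming from explicit non-identities, and then checking that the two bounds coincide and force $m_\lambda\in\{0,1\}$. Since $\operatorname{char}K=0$ it suffices to work with the proper cocharacter. As a first reduction I would record the shape constraints already in hand: the algebra $E\otimes E_{2l}$ satisfies $[x_1,\dots,x_{2l+3}]=0$ (see \cite{DeK} and Subsection \ref{subsec_E}), so it lies in $\mathfrak{N}_{2l+2}$; hence by \cite{H} every $\lambda$ occurring has $\lambda_1\le 2l+1$, and by Theorem \ref{thm_Berele} only the first column of $\lambda$ may grow as $n\to\infty$. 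Both facts are consistent with the claimed list, in which the single unbounded direction is precisely the tail $1^c$.

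For the upper bound, the core task is to show that $m_\lambda=0$ unless $\lambda_2\le 2$ and $a+b+1\le 2l$ (writing $\lambda=(a+2,2^b,1^c)$), apart from the separate column $(1^n)$. The two conditions reflect the two tensor factors. The bound $\lambda_2\le2$ I would extract from the Grassmann factor $E$: the variety generated by $E$ has its cocharacter supported on hooks, i.e.\ in the strip $\lambda_2\le1$, while the finite factor $E_{2l}$ can contribute at most one further column of height $\ge2$; combining the two supports by a Regev-type bound on the cocharacter support of a tensor product gives $\lambda_2\le 2$, and I would make the ``one extra column'' precise by exhibiting, for any tableau whose shape has $\lambda_2\ge3$, an identity of $E\otimes E_{2l}$ that annihilates the corresponding essential idempotent. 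The numerical bound $a+b+1\le 2l$ comes from the degree ceiling of $E_{2l}$: its top nonzero monomial has degree $2l$, so at most $2l$ alternating generators of the $E_{2l}$-slot are available, and translating this ceiling into a bound on the part of $\lambda$ that cannot be carried by the $E$-slot yields $a+b+1\le 2l$.

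For the lower bound, for each admissible $\lambda=(a+2,2^b,1^c)$ I would write an explicit highest weight vector $f_\lambda$ as a suitably alternated product of commutators and evaluate it on elements $e_i\otimes1$, $1\otimes f_j$, $e_i\otimes f_j$ of $E\otimes E_{2l}$, arranging that the long first column $1^c$ is supported by the infinitely many generators of $E$ (the direction allowed to grow by Theorem \ref{thm_Berele}) while the bounded part $(a+2,2^b)$ is supported by the $2l$ generators of $E_{2l}$; this is exactly the bookkeeping underlying Proposition \ref{prop_equal} and Proposition \ref{prop_module}. A nonzero value then forces $m_\lambda\ge1$. The term $(1^n)$ for even $n$ is realized by $[x_1,x_2]\cdots[x_{n-1},x_n]$ under $x_i\mapsto e_i\otimes1$, which equals $2^{n/2}e_1\cdots e_n\otimes1\ne0$; for odd $n$ it cannot occur, since the sign representation is not even a constituent of the free proper module $\Gamma_n$ (the unique fully antisymmetric multilinear polynomial, the standard polynomial $s_n$, is proper only for even $n$), so it cannot survive in the quotient. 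Finally, to upgrade $m_\lambda\ge1$ to $m_\lambda=1$ I would produce a spanning set of $\Gamma_n(E\otimes E_{2l})$ contributing a single essential idempotent per admissible $\lambda$, or compare the resulting dimension against the proper codimension through Proposition \ref{prop_codim_propcodim}.

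The main obstacle I anticipate is the upper bound, and specifically the simultaneous accounting of the two slots: one must show that the antisymmetry demanded by the columns of $\lambda$ can be split between the $E$-slot (unbounded, but confined to a single long column because $E$ satisfies $[x_1,x_2,x_3]=0$) and the $E_{2l}$-slot (bounded by degree $2l$) precisely for the listed shapes, and that every other $\lambda$ is killed by a genuine identity rather than a heuristic. Matching this splitting of the alternation against the exact condition $a+b+1\le 2l$, and thereby also establishing the multiplicity-one statement, is the technical crux; by contrast the lower bound reduces to concrete, essentially routine commutator evaluations in a Grassmann algebra.
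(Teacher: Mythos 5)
First, note that the paper contains no proof of this statement: it is quoted verbatim from \cite{DiD}, so the only meaningful comparison is with the method of that reference, which does follow your general upper-bound/lower-bound scheme -- but with the substantive content being explicit computations that your proposal leaves out. The genuine gap is in your upper bound, which you yourself flag as the crux but do not carry out, and the one tool you name for it cannot deliver the needed conclusion. The Berele--Regev hook theorem for tensor products, applied to $E$ and $E_{2l}$ (each with cocharacters supported in the hook $H(1,1)$), only places the cocharacter of $E\otimes E_{2l}$ inside the double hook $H(2,2)$, i.e.\ it gives $\lambda_3\le 2$, not the claimed $\lambda_2\le 2$. The two-column shape $(a+2,2^b,1^c)$ is not a formal consequence of any support-combination principle; in \cite{DiD} it rests on the concrete structure of $\Gamma_n(E\otimes E)$ (ultimately on an explicit generating set for $T(E\otimes E)$, e.g.\ the centrality of $[x_1,x_2,x_3]$ in the relatively free algebra and the identity $[[x_1,x_2],[x_3,x_4],x_5]=0$), and the cutoff $a+b+1\le 2l$ -- which is the hook length $h_{12}(\lambda)$, as in Theorem \ref{thm_DiD2} -- is obtained by evaluating each candidate highest weight vector in $E\otimes E_{2l}$ and proving vanishing exactly beyond the cutoff. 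Your appeal to a ``degree ceiling'' of $E_{2l}$ is the right heuristic but is not an argument: one must show that the alternation encoded in the first two columns of $\lambda$ forces at least $a+b+1$ essentially distinct odd elements into the $E_{2l}$-slot, and this is precisely the bookkeeping you defer.

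A second, related gap is the multiplicity-one statement. Your fallback -- comparing dimensions via Proposition \ref{prop_codim_propcodim} -- is circular, since that proposition only relates $c_n$ and $\gamma_n$ to each other and neither is known independently for $E\otimes E_{2l}$; establishing $m_\lambda\le 1$ requires either the prior result that all multiplicities in $\Gamma_n(E\otimes E)$ are at most one (so that the quotient $\Gamma_n(E\otimes E_{2l})$ inherits the bound, which is how \cite{DiD} proceeds) or an explicit spanning argument in the relatively free algebra, neither of which you supply. By contrast, your lower-bound half is sound: the evaluations splitting the long column $1^c$ into the $E$-slot and the bounded part $(a+2,2^b)$ into the $E_{2l}$-slot are routine and match the style of Propositions \ref{prop_equal} and \ref{prop_module}, and your argument that $M(1^n)$ is absent from $\Gamma_n$ for odd $n$ (since $s_n$ with $x_1=1$ reduces to $s_{n-1}\ne 0$, so $s_n$ is not proper for odd $n$) is correct. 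As it stands, however, the proposal is a plan whose decisive step -- the exact shape restriction and multiplicity bound -- is missing.
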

	
	\begin{theorem} [\cite{DiD}] \label{thm_DiD2}
		Let $h_{ij}(\lambda)$ denote the $(i,j)$-th hook of the Young diagram of the partition $\lambda$. If $m \geq l \geq 1$, then
		\[ 	
		\Gamma_n(E_{2m}\otimes E_{2l}) = \sum M(a+2, 2^b, 1^c) + \varepsilon_n M(1^n),
		\]
		where $\varepsilon_n = 1$ for even $n \leq 2(m+l)$ and $\varepsilon_n = 0$ otherwise and the sum is over all partitions $\lambda = (a+2, 2^b, 1^c)$ of $n$ such that $a \geq 0$, $b+c > 0$, and $h_{12}(\lambda) = a + b + 1 \leq 2l$ and one of the following conditions holds:
		\begin{itemize}
			\item [(i)] $h_{11}(\lambda) + h_{12}(\lambda) -1 = 2a + 2b + c + 2 < 2(m+l)$;
			\item  [(ii)] $h_{11}(\lambda) + h_{12}(\lambda) -1 = 2(m+l)$ and $h_{12}(\lambda) \equiv 0(\operatorname{mod} 2)$.
		\end{itemize}

	\end{theorem}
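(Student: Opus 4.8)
The plan is to obtain the finite case as a ``cut-off'' of the infinite case already recorded in Theorem \ref{thm_DiD}. Since $E_{2m}$ is the quotient of $E$ by the ideal generated by all generators past the first $2m$, tensoring with the identity on $E_{2l}$ produces a surjective algebra homomorphism $E\otimes E_{2l}\twoheadrightarrow E_{2m}\otimes E_{2l}$, so $T(E\otimes E_{2l})\subseteq T(E_{2m}\otimes E_{2l})$. Hence $\Gamma_n(E_{2m}\otimes E_{2l})$ is a quotient $S_n$-module of $\Gamma_n(E\otimes E_{2l})$, and as we are in characteristic zero a quotient of a multiplicity-free module stays multiplicity-free with multiplicities that can only decrease. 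By Theorem \ref{thm_DiD} this already forces every multiplicity in $\Gamma_n(E_{2m}\otimes E_{2l})$ to be $0$ or $1$, and forces the only possible shapes to be $\lambda=(a+2,2^b,1^c)$ with $a\ge 0$, $b+c>0$ and $h_{12}(\lambda)=a+b+1\le 2l$, together with the sign module $M(1^n)$ for even $n$. The whole problem is thus reduced to deciding, shape by shape, which of these survive the replacement of $E$ by $E_{2m}$.

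For each admissible $\lambda$ I would fix a highest weight vector $w_\lambda$ in the space of proper polynomials, that is a product of commutators carrying the essential idempotent of shape $\lambda$, so that the multiplicity of $M(\lambda)$ equals $1$ exactly when $w_\lambda\notin T(E_{2m}\otimes E_{2l})$, and then evaluate $w_\lambda$ on elements of $E_{2m}\otimes E_{2l}$. Two capacity bounds govern such an evaluation. First, the commutator expansions used in the proof of Theorem \ref{thm_tensorProduct} (the type II and type III terms) show that the second tensor factor stores the ``width-two band'' of $\lambda$ as a product of double commutators; since $E_{2l}$ annihilates every product of $l+1$ double commutators, this band must fit, which forces $h_{12}(\lambda)\le 2l$, the condition inherited from the infinite case. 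Second, the two factors jointly offer only $2m+2l$ mutually anticommuting odd directions, so the total antisymmetric content of $w_\lambda$, namely the combined hook length $h_{11}(\lambda)+h_{12}(\lambda)-1=2a+2b+c+2$, must satisfy $h_{11}(\lambda)+h_{12}(\lambda)-1\le 2(m+l)$ (and when this fails the content cannot be accommodated, so $w_\lambda$ is an identity). This is condition (i) together with the inequality in condition (ii). Concretely, when the bound is strict one has genuine freedom in spreading the odd directions across the two factors, and an explicit specialization of the type appearing in the proof of Theorem \ref{thm_DiD} (whose witness uses at most $2m$ generators of the first factor once the hook bounds hold) shows $w_\lambda$ is not an identity.

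The delicate point, and what I expect to be the main obstacle, is the boundary $h_{11}(\lambda)+h_{12}(\lambda)-1=2(m+l)$. There every nonzero evaluation is forced to exhaust all $2m$ generators of $E_{2m}$ and all $2l$ generators of $E_{2l}$, so the result lands in the one-dimensional top component $K\,(e_1\cdots e_{2m})\otimes(f_1\cdots f_{2l})$, and its coefficient is a single Grassmann sign produced by reordering the generators according to the tableau of $\lambda$. The task is to compute this sign in closed form, in the spirit of the explicit evaluation carried out in Proposition \ref{prop_equal}: substituting the column variables of $\lambda$ by generators distributed between the two factors and tracking the anticommutation signs. The expected outcome is that the coefficient survives precisely when the width-two band $h_{12}(\lambda)=a+b+1$ is even and cancels when it is odd, which is exactly condition (ii). The sign module $M(1^n)$ fits the same analysis with $h_{12}=0$ (even): the product $[x_1,x_2]\cdots[x_{n-1},x_n]$ is realized as a nonzero top element if and only if its $n/2$ commutators fit into the $m+l$ available pairs of directions, i.e.\ $n\le 2(m+l)$, which recovers the factor $\varepsilon_n$. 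Assembling the three steps then yields the stated decomposition.
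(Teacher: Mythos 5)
First, a point of comparison: the paper does not prove this statement at all --- Theorem \ref{thm_DiD2} is imported verbatim from \cite{DiD}, so there is no internal proof to measure your attempt against, and your proposal must stand on its own. Its opening reduction is sound: the surjection $E\otimes E_{2l}\twoheadrightarrow E_{2m}\otimes E_{2l}$ (kill the generators of $E$ beyond the first $2m$) gives $T(E\otimes E_{2l})\subseteq T(E_{2m}\otimes E_{2l})$, so $\Gamma_n(E_{2m}\otimes E_{2l})$ is a quotient of the multiplicity-free module of Theorem \ref{thm_DiD}; hence all multiplicities are $0$ or $1$, only the shapes $(a+2,2^b,1^c)$ with $a+b+1\le 2l$ and possibly $M(1^n)$ can occur, and everything reduces to deciding which highest weight vectors remain non-identities. (Minor point: for this to be well-posed you must pick $w_\lambda$ nonzero modulo $T(E\otimes E_{2l})$, but that is easily arranged.)

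From there, however, the argument has genuine gaps at exactly the decisive places. The ``capacity'' justification of the bound $h_{11}(\lambda)+h_{12}(\lambda)-1\le 2(m+l)$ --- that the two factors offer only $2m+2l$ mutually anticommuting odd directions --- is not a proof: $e_i\otimes 1$ and $1\otimes f_j$ \emph{commute}, so $E_{2m}\otimes E_{2l}$ is not a Grassmann algebra on $2m+2l$ generators, and counting odd directions cannot by itself bound a hook length. Note also that for $\lambda=(a+2,2^b,1^c)\vdash n$ one has $h_{11}(\lambda)+h_{12}(\lambda)-1=n+a$, so condition (i) is strictly stronger than the degree bound $\gamma_n(E_{2m}\otimes E_{2l})=0$ for $n>2(m+l)$ recorded in Proposition \ref{prop_codimE}: the arm $a$ enters, and your heuristic supplies no mechanism forcing a long-armed highest weight vector of small degree to vanish. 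Second, the non-identity witnesses in the strict regime are delegated to ``the proof of Theorem \ref{thm_DiD}'', which is likewise only cited, never proved, in this paper, so that step is outsourced rather than done. Third, and most importantly, the boundary case $h_{11}(\lambda)+h_{12}(\lambda)-1=2(m+l)$ --- whose parity condition $h_{12}(\lambda)\equiv 0 \pmod 2$ is the only genuinely new content of this theorem relative to Theorem \ref{thm_DiD} --- is not established: you correctly identify the one-dimensional top component and the sign computation that would settle it, but then state its ``expected outcome'', which merely restates the theorem being proved. As it stands the proposal is a correct reduction plus a program; the three evaluation arguments left open constitute the substance of di Vincenzo and Drensky's proof in \cite{DiD}.
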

	
	Theorem \ref{thm_DiD} applied to $E\otimes E_{2k-2}$ implies that
	\[
	\Gamma_n(\mathfrak{N}_{2k+1}) \supseteq \Gamma_n(\mathfrak{N}_{2k}) \supset \sum M(a+2, 2^b, 1^c) + \varepsilon_n M(1^n)
	\]
	with $a \geq 0$, $b+c > 0$, and $a + b + 1 \leq 2k-2$; $\varepsilon_n = 1$ for even $n$ and $\varepsilon_n = 0$ for odd $n$.
	Furthermore, if we apply Theorem \ref{thm_DiD2} to $E_{2k} \otimes E_{2k}$ we obtain that for $n \leq 4k$, $\Gamma_n(\mathfrak{N}_{2k+1})$ is strictly larger than $\Gamma_n(\mathfrak{N}_{2k})$ and contains also the modules $M(\lambda)$ corresponding to the partitions
	\begin{itemize}
		\item $\lambda = (a+2, 2^b)$ for $a \geq 0$, $b> 0$, and $a+b+1 = 2k$ or $a+b+1 = 2k-1$;
		\item $\lambda = (a+2, 2^b, 1)$ for $a \geq 0$, $b \geq 0$, and $a+b+1 = 2k-1$.
	\end{itemize}	
	
	In the last part of this subsection, we will compute lower bounds for the codimension sequences of the varieties $\mathfrak{N}_{2k}$ and $\mathfrak{N}_{2k+1}$.
	
	In \cite{DiD}, the authors compute the proper codimension sequences and the codimension sequence of $E \otimes E_{2l}$ and $E_{2m}\otimes E_{2k}$. Namely, they have the following statements.
	
	\begin{proposition} [\cite{DiD}] \label{prop_codimE}
		Let $m \geq l \geq 1$. For the proper codimension sequences of $E \otimes E_{2l}$ and $E_{2m}\otimes E_{2k}$ we have the following
		\begin{itemize} 
			\item $\gamma_{2(m+l)}(E_{2m}\otimes E_{2k}) > 0$ and $\gamma_{n}(E_{2m}\otimes E_{2k}) = 0$ for $n > 2(m+l)$.
			\item For $n \geq 2l+2$, 
			$\gamma_{n}(E\otimes E_{2l})$ is a polynomial with rational coefficients of degree $2l$ in $n$ with leading term
			\[
			\frac{1}{(2l)!}\sum_{p = 0}^{2l}\dim M(2l-p, 1^p)  = \frac{2^{2l-1}}{(2l)!}.
			\] 
		\end{itemize}
	\end{proposition}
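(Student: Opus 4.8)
The plan is to translate both assertions into the dimension count $\gamma_n(A) = \dim \Gamma_n(A) = \sum_{\lambda} m_\lambda \dim M(\lambda)$ and then to read off the required information from the explicit $S_n$-decompositions in Theorem \ref{thm_DiD} and Theorem \ref{thm_DiD2}. Since every multiplicity occurring there equals $1$, I only need to add up dimensions of the modules $M(a+2,2^b,1^c)$ together with the possible sign module $M(1^n)$. (I read the second algebra in the statement as $E_{2m}\otimes E_{2l}$, in agreement with Theorem \ref{thm_DiD2}.)

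For the first bullet I would record, for $\lambda = (a+2,2^b,1^c)$, the two hook values $h_{12}(\lambda) = a+b+1$ and $h_{11}(\lambda)+h_{12}(\lambda)-1 = 2a+2b+c+2$, and compare the latter with $n = |\lambda| = a+2b+c+2$. As $a \geq 0$,
\[
n = a+2b+c+2 \leq 2a+2b+c+2 = h_{11}(\lambda)+h_{12}(\lambda)-1 \leq 2(m+l),
\]
the last inequality being exactly conditions (i)--(ii) of Theorem \ref{thm_DiD2}; the sign module is likewise confined to even $n \leq 2(m+l)$. Hence every constituent of $\Gamma_n(E_{2m}\otimes E_{2l})$ disappears once $n > 2(m+l)$, so that $\gamma_n(E_{2m}\otimes E_{2l}) = 0$ there. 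For positivity at $n = 2(m+l)$ it suffices to note that this degree is even and equal to $2(m+l)$, so the term $\varepsilon_n M(1^n)$ is present and already gives $\gamma_{2(m+l)}(E_{2m}\otimes E_{2l}) \geq \dim M(1^{2(m+l)}) = 1$.

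For the second bullet the heart of the matter is to show that, for fixed $a,b \geq 0$ and $c = n-a-2b-2$, the quantity $\dim M(a+2,2^b,1^c)$ is polynomial in $n$ and to find its leading term. I would pass to the transpose $(a+2,2^b,1^c)' = (\,n-s,\;b+1,\;1^a\,)$ with $s = a+b+1$, which has a single growing part and a fixed tail $\nu = (b+1,1^a) \vdash s$. It is classical (a direct consequence of the hook length formula) that $\dim M(n-s,\nu)$ is a polynomial in $n$ of degree $s$ with leading coefficient $\dim M(\nu)/s!$; here $\dim M(b+1,1^a) = \binom{a+b}{a}$, whence
\[
\dim M(a+2,2^b,1^c) = \frac{1}{(a+b+1)!}\binom{a+b}{a}\, n^{\,a+b+1} + (\text{lower order}).
\]
Because Theorem \ref{thm_DiD} restricts the constituents to $a+b+1 \leq 2l$, summing these finitely many polynomials (the bounded parity contribution $\varepsilon_n \in \{0,1\}$ cannot affect a top degree $2l \geq 2$) exhibits $\gamma_n(E\otimes E_{2l})$ as a polynomial of degree $2l$ in $n$.

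Finally I would extract the leading coefficient. Only the shapes with $a+b+1 = 2l$ contribute in top degree, and as $(a,b)$ runs over $a+b = 2l-1$ the tail $\nu = (b+1,1^a)$ runs over all hook partitions of $2l$; summing the leading coefficients therefore yields
\[
\frac{1}{(2l)!}\sum_{\nu \text{ a hook of } 2l} \dim M(\nu) = \frac{1}{(2l)!}\sum_{p=0}^{2l-1}\dim M(2l-p,1^p) = \frac{1}{(2l)!}\sum_{p=0}^{2l-1}\binom{2l-1}{p} = \frac{2^{2l-1}}{(2l)!},
\]
which is the asserted leading term, the sum over hooks being precisely the sum $\sum_{p}\dim M(2l-p,1^p)$ in the statement. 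I expect the main obstacle to be exactly this bookkeeping: verifying the transpose dimension asymptotic cleanly, and pinning down the range of $n$ in which $\gamma_n(E\otimes E_{2l})$ agrees with a single degree-$2l$ polynomial carrying this leading coefficient. The earliest top-degree shape is the hook $(2l+1,1^{\,n-2l-1})$, which first occurs at $n = 2l+2$ (explaining the threshold where the degree reaches $2l$), whereas the full top-degree coefficient is only assembled once every shape with $a+b+1 = 2l$ has appeared, so the stated leading term is to be read as the large-$n$ value.
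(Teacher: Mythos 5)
The paper offers no proof of Proposition \ref{prop_codimE}: it is quoted from \cite{DiD}, just like Theorems \ref{thm_DiD} and \ref{thm_DiD2}, so there is no in-paper argument to compare against; your derivation of the proposition \emph{from} those two decomposition theorems is the natural route and is essentially how the result arises in \cite{DiD} itself. The substance checks out. You correctly repair the typo $E_{2m}\otimes E_{2k} \mapsto E_{2m}\otimes E_{2l}$; the vanishing for $n>2(m+l)$ follows exactly as you say from $n=a+2b+c+2\leq 2a+2b+c+2=h_{11}(\lambda)+h_{12}(\lambda)-1\leq 2(m+l)$, and $\varepsilon_{2(m+l)}=1$ already gives positivity at $n=2(m+l)$. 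For the second bullet, the transpose $(a+2,2^b,1^c)'=(n-s,b+1,1^a)$ with $s=a+b+1$, the classical fact that $\dim M(n-s,\nu)$ is a polynomial in $n$ of degree $s$ with leading coefficient $\dim M(\nu)/s!$, the count $\dim M(b+1,1^a)=\binom{a+b}{a}$, and the hook sum $\sum_{a+b=2l-1}\binom{2l-1}{a}=2^{2l-1}$ are all correct and yield the asserted degree $2l$ and leading term (the upper limit $p=2l$ in the statement is another transcription slip: the hooks of $2l$ are $(2l-p,1^p)$ with $0\leq p\leq 2l-1$, though the numerical value $2^{2l-1}$ is unaffected).

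The one place where your argument delivers less than the literal wording is the clause ``for $n\geq 2l+2$, $\gamma_n(E\otimes E_{2l})$ is a polynomial,'' and you are right to be uneasy there: with Theorem \ref{thm_DiD} taken at face value one gets $\gamma_n=P(n)+\varepsilon_n$, e.g.\ $\gamma_n(E\otimes E_2)=n^2-2n+\varepsilon_n$ for $n\geq 4$, which is not literally one polynomial; moreover, for larger $l$ the shapes with $a+2b+2>n$ are absent from the sum while their dimension polynomials need not vanish at those integers (in your normalization the polynomial for the pair $(a,b)$ vanishes only at $n=s+b$, i.e.\ $c=-1$, and at small $n$), so a single polynomial expression is only guaranteed once all shapes are present, roughly $n\geq 4l$, rather than from $n=2l+2$ on. You flag both points honestly but leave them as ``large-$n$'' readings. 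Since these are defects of the statement as transcribed rather than of your reasoning, and since everything the paper later extracts from this proposition (Corollary \ref{coro_dim_FromE} and the leading coefficients in Corollaries \ref{coro_final_bound_Gamma} and \ref{coro_final_bound_codim}) uses only the degree and leading coefficient, your proof is correct in the sense that matters; to make it airtight you should state the conclusion as: $\gamma_n(E\otimes E_{2l})$ agrees, up to the bounded term $\varepsilon_n$ and finitely many initial corrections, with a fixed polynomial of degree $2l$ whose leading coefficient is $2^{2l-1}/(2l)!$.
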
	
	\begin{theorem} [\cite{DiD}] \label{thm_codimE}
		Let $m \geq l \geq 1$. 	
		\begin{itemize}
			\item [(i)] The codimension sequence $c_n(E_{2m}\otimes E_{2l})$ is a polynomial with rational coefficients of degree $2(m+l)$ in $n$.
			\item[(ii)]	Let $n > 0$. Then
			\[
			c_n(E \otimes E_{2l}) = 2^{n-1} \xi_l(n) + \eta_l(n),
			\]	
			where $\xi_l(n)$ and $\eta_l(n)$ are polynomials with rational coefficients in $n$ such that $\deg \xi_l(n) = 2l$, $\deg \eta_l(n) \leq 2l+1$ 
			and the leading term of $\xi_l(n)$ is equal to $((2l)!)^{-1}$.		
		\end{itemize}
	\end{theorem}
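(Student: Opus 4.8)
The plan is to derive both statements from the relation between codimensions and proper codimensions recorded in Proposition~\ref{prop_codim_propcodim}(i), namely $c_n(A) = \sum_{j=0}^{n}\binom{n}{j}\gamma_j(A)$, combined with the explicit information on proper codimensions supplied by Proposition~\ref{prop_codimE}. The key observation is that each $\binom{n}{j}$ is a polynomial of degree $j$ in $n$ which vanishes at the nonnegative integers $n < j$, so that the growth of $c_n$ is governed entirely by how the sequence $\gamma_j$ behaves; in particular a \emph{finite} linear combination of the polynomials $\binom{n}{j}$ is automatically a polynomial in $n$.

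For part~(i), Proposition~\ref{prop_codimE} gives $\gamma_j(E_{2m}\otimes E_{2l}) = 0$ for $j > 2(m+l)$ and $\gamma_{2(m+l)}(E_{2m}\otimes E_{2l}) > 0$. Hence the defining sum is finite, $c_n(E_{2m}\otimes E_{2l}) = \sum_{j=0}^{2(m+l)}\binom{n}{j}\gamma_j(E_{2m}\otimes E_{2l})$, and this identity holds for every $n \geq 0$ because for $n < j$ one has $\binom{n}{j}=0$, so the omitted terms contribute nothing. This finite combination of the $\binom{n}{j}$ is a polynomial in $n$ whose degree equals the largest index with nonzero coefficient; since $\gamma_{2(m+l)} > 0$ the degree is exactly $2(m+l)$, with positive leading coefficient $\gamma_{2(m+l)}(E_{2m}\otimes E_{2l})/(2(m+l))!$. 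This settles~(i).

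For part~(ii) the proper codimension sequence of $E\otimes E_{2l}$ is not eventually zero: by Proposition~\ref{prop_codimE} there is a polynomial $g(n)$ of degree $2l$, with leading coefficient $2^{2l-1}/(2l)!$, such that $\gamma_n(E\otimes E_{2l}) = g(n)$ for $n \geq 2l+2$. I would write $\gamma_j = g(j) + \delta_j$, where $\delta_j = \gamma_j - g(j)$ is supported on $0 \leq j \leq 2l+1$, and split $c_n$ accordingly. The correction term $\sum_{j=0}^{2l+1}\binom{n}{j}\delta_j$ is manifestly a polynomial in $n$ of degree at most $2l+1$, and is absorbed into $\eta_l(n)$. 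For the main term I would expand $g$ in the binomial basis, $g(j) = \sum_{i=0}^{2l} a_i \binom{j}{i}$, and apply the subset-of-a-subset identity $\sum_{j=0}^{n}\binom{n}{j}\binom{j}{i} = \binom{n}{i}\,2^{n-i}$, valid for all $n \geq 0$. This yields $\sum_{j=0}^{n}\binom{n}{j}g(j) = 2^{n}\sum_{i=0}^{2l} a_i 2^{-i}\binom{n}{i}$, so setting $\xi_l(n) = 2\sum_{i=0}^{2l} a_i 2^{-i}\binom{n}{i}$ produces the factor $2^{n-1}\xi_l(n)$ with $\deg \xi_l = 2l$. Matching leading terms forces $a_{2l} = 2^{2l-1}$, whence a short computation gives the leading coefficient of $\xi_l$ as $(2l)!^{-1}$, as claimed.

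The routine but delicate part will be the bookkeeping at the two ends: confirming that the finitely many exceptional small values of $\gamma_n$ (those below $2l+2$) feed only into $\eta_l$ and do not perturb the degree-$2l$ leading behaviour of $\xi_l$, and verifying the binomial-transform identity together with the leading-coefficient calculation so that the degree bounds $\deg\xi_l = 2l$ and $\deg\eta_l \leq 2l+1$, and the value $(2l)!^{-1}$, all emerge exactly. I do not expect a conceptual obstacle here, since the only inputs are the polynomiality statements of Proposition~\ref{prop_codimE} and an elementary binomial identity; the substance of the argument is simply organizing the summation so that the exponential factor $2^{n-1}$ is extracted cleanly.
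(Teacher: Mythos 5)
Your argument is correct, but it takes a genuinely different route from the one in the literature that this paper relies on. Note first that Theorem \ref{thm_codimE} is quoted from \cite{DiD} and not proved in this paper at all; the closest in-paper argument is the proof of Corollary \ref{coro_codim}, which is explicitly modeled on the proof of Theorem 3.5(ii) of \cite{DiD} and works on the level of generating series: it invokes Proposition \ref{prop_codim_propcodim}(ii), expands the proper codimension data in the basis $\binom{n+m}{m}$, uses $\sum_{n\geq 0}\binom{n+m}{m}t^n = (1-t)^{-(m+1)}$, substitutes $t \mapsto t/(1-t)$, and reorganizes in powers of $(1-2t)^{-1}$ to extract $2^n$ times a polynomial. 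You instead stay with the finite sum of Proposition \ref{prop_codim_propcodim}(i), split $\gamma_j = g(j) + \delta_j$ with the correction $\delta_j$ supported on $j \leq 2l+1$, and evaluate $\sum_{j}\binom{n}{j}g(j)$ via the subset-of-a-subset identity $\sum_{j}\binom{n}{j}\binom{j}{i} = 2^{n-i}\binom{n}{i}$; part (i) is immediate since the sum terminates at $j = 2(m+l)$ and $\gamma_{2(m+l)} > 0$ pins the degree. The two computations are binomial-transform duals of one another --- your identity is exactly the coefficientwise form of the series substitution --- so nothing is lost; your version is the more elementary one (no formal series, no bookkeeping of auxiliary polynomials like $\rho_m$ and $\tau_k$ in the paper's Corollary \ref{coro_codim}), while the generating-function formulation buys a reusable mechanism that the paper deploys again where only an inequality $\gamma_n \geq p_k(n)$ is available and the exceptional small values are patched in by hand, just as you do with $\delta_j$. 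Your endgame checks are right: since $\binom{n}{j} = 0$ for $n < j$, the decomposition is valid for all $n \geq 0$, the correction lands in $\eta_l$ with $\deg \eta_l \leq 2l+1$, and matching leading terms gives $a_{2l} = 2^{2l-1}$, hence leading coefficient $2 \cdot 2^{2l-1} \cdot 2^{-2l} \cdot ((2l)!)^{-1} = ((2l)!)^{-1}$ for $\xi_l$, with $\deg \xi_l = 2l$ exactly.
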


	We obtain the following immediate corollary from Proposition \ref{prop_codimE} and Theorem \ref{thm_codimE}.
	
	\begin{corollary} \label{coro_dim_FromE} Let $k \geq 1$. Then,
		\begin{itemize}
			\item [(i)] For $n \geq 2k$, \[
			\gamma_n(\mathfrak{N}_{2k+1}) \geq \gamma_n(\mathfrak{N}_{2k}) \geq p(n),
			\]
			where $p(n)$ is a polynomial with rational coefficients of degree $2k-2$ in $n$ with leading coefficient $\frac{2^{2k-3}}{(2k-2)!}$.	 
			\item[(ii)]
			\[
			c_n(\mathfrak{N}_{2k+1}) \geq c_n(\mathfrak{N}_{2k}) \geq  2^{n-1} \xi(n) + \eta(n),
			\]	
			where $\xi(n)$ and $\eta(n)$ are polynomials with rational coefficients in $n$ such that $\deg \xi(n) = 2k-2$, $\deg \eta(n) \leq 2k-1$ 
			and the leading term of $\xi(n)$ is equal to $((2k-2)!)^{-1}$.
		\end{itemize}	
		Furthermore, for $n \leq 4k$ we have that $c_n(\mathfrak{N}_{2k+1})$ (respectively, $\gamma_n(\mathfrak{N}_{2k+1})$) is strictly bigger that $c_n(\mathfrak{N}_{2k})$ (resp., $\gamma_n(\mathfrak{N}_{2k})$).
	\end{corollary}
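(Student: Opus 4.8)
The plan is to sandwich each of the sequences for $\mathfrak{N}_{2k}$ and $\mathfrak{N}_{2k+1}$ between the already known sequences of a Grassmann tensor product, using two inclusions of varieties. First I would record that $E\otimes E_{2k-2}$ satisfies $[x_1,\dots,x_{2k+1}]=0$ (this is \cite{DeK} with index $2l=2k-2$), so the variety it generates is contained in $\mathfrak{N}_{2k}$; and that $\mathfrak{N}_{2k}\subseteq\mathfrak{N}_{2k+1}$, since Lie nilpotency of index at most $2k$ is in particular Lie nilpotency of index at most $2k+1$. Over a field of characteristic zero each such inclusion of varieties induces a surjection of relatively free algebras, which restricts to a surjection of the spaces of proper multilinear polynomials; by complete reducibility of $S_n$-modules this forces the dimension inequalities
\[
\gamma_n(E\otimes E_{2k-2})\ \leq\ \gamma_n(\mathfrak{N}_{2k})\ \leq\ \gamma_n(\mathfrak{N}_{2k+1}),
\]
and, identically, $c_n(E\otimes E_{2k-2})\leq c_n(\mathfrak{N}_{2k})\leq c_n(\mathfrak{N}_{2k+1})$.

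The explicit bounds then come from specializing the formulas for $E\otimes E_{2l}$ at $l=k-1$. For (i), Proposition \ref{prop_codimE} states that for $n\geq 2l+2$ the sequence $\gamma_n(E\otimes E_{2l})$ is a polynomial of degree $2l$ with leading term $2^{2l-1}/(2l)!$; with $l=k-1$ this becomes, for $n\geq 2k$, a polynomial $p(n)$ of degree $2k-2$ with leading coefficient $2^{2k-3}/(2k-2)!$, exactly as claimed. For (ii), Theorem \ref{thm_codimE} gives $c_n(E\otimes E_{2l})=2^{n-1}\xi_l(n)+\eta_l(n)$ with $\deg\xi_l=2l$, $\deg\eta_l\leq 2l+1$ and leading term of $\xi_l$ equal to $((2l)!)^{-1}$; setting $\xi=\xi_{k-1}$ and $\eta=\eta_{k-1}$ produces the stated formula.

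For the strict comparison I would bring in $E_{2k}\otimes E_{2k}$, which by Proposition \ref{prop_equal} lies in $\mathfrak{N}_{2k+1}$ but not in $\mathfrak{N}_{2k}$, so that $\Gamma_n(E_{2k}\otimes E_{2k})$ embeds as an $S_n$-submodule of $\Gamma_n(\mathfrak{N}_{2k+1})$. Applying Theorem \ref{thm_DiD2} with $m=l=k$ produces constituents $M(\lambda)$ of $\Gamma_n(E_{2k}\otimes E_{2k})$ whose first row attains length $2k$ --- for example the partitions meeting condition (ii) with $h_{12}(\lambda)=2k$, such as $\lambda=(2k,2)$ at $n=2k+2$. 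By the bound $\lambda_1\leq 2k-1$ from \cite{H}, no such constituent can occur in $\Gamma_n(\mathfrak{N}_{2k})$; this yields the strict inequality $\gamma_n(\mathfrak{N}_{2k+1})>\gamma_n(\mathfrak{N}_{2k})$ in the range where these modules are available (namely up to $n=4k$, beyond which $\gamma_n(E_{2k}\otimes E_{2k})$ vanishes). To transfer strictness to codimensions I would use Proposition \ref{prop_codim_propcodim}(i): from $c_n=\sum_{l=0}^{n}\binom{n}{l}\gamma_l$ with nonnegative weights and termwise $\gamma_l(\mathfrak{N}_{2k+1})\geq\gamma_l(\mathfrak{N}_{2k})$, a single strict inequality among the $\gamma_l$ with $l\leq n$ already forces $c_n(\mathfrak{N}_{2k+1})>c_n(\mathfrak{N}_{2k})$.

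Most of this is routine bookkeeping --- matching specialized leading coefficients and keeping the direction of each inclusion straight. The one genuinely delicate point is the strict comparison: one must produce an honest distinguishing constituent coming from $E_{2k}\otimes E_{2k}$ for each relevant $n$, and notice that for $n\leq 2k$ both relatively free algebras still coincide with the full $\Gamma_n$ (the shortest relevant commutator has length $2k+1>n$), so the strict comparison can only begin once $n$ exceeds the Lie nilpotency index. This is precisely where the finer Theorem \ref{thm_DiD2}, rather than Theorem \ref{thm_DiD}, is indispensable.
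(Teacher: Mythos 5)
Your parts (i) and (ii) follow exactly the paper's route: the corollary is obtained there as an immediate consequence of Proposition \ref{prop_codimE} and Theorem \ref{thm_codimE} specialized at $l=k-1$, combined with the inclusions $\operatorname{var}(E\otimes E_{2k-2})\subseteq \mathfrak{N}_{2k}\subseteq \mathfrak{N}_{2k+1}$, which give $\gamma_n(E\otimes E_{2k-2})\leq \gamma_n(\mathfrak{N}_{2k})\leq \gamma_n(\mathfrak{N}_{2k+1})$ and likewise for $c_n$. Your remark that the strict comparison cannot start before $n=2k+1$ (since $\Gamma_n(\mathfrak{N}_{2k+1})=\Gamma_n(\mathfrak{N}_{2k})=\Gamma_n$ for $n\leq 2k$) is correct and agrees with the paper's own later, more careful formulations in Corollaries \ref{coro_final_bound_Gamma} and \ref{coro_final_bound_codim}. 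Your transfer of strictness to the codimensions via Proposition \ref{prop_codim_propcodim}(i) is also sound, and in fact stronger than needed: one strict inequality $\gamma_{2k+1}(\mathfrak{N}_{2k+1})>\gamma_{2k+1}(\mathfrak{N}_{2k})$ already forces $c_n(\mathfrak{N}_{2k+1})>c_n(\mathfrak{N}_{2k})$ for every $n\geq 2k+1$. (A minor shared defect: Proposition \ref{prop_codimE} and Theorem \ref{thm_codimE} assume $m\geq l\geq 1$, so your specialization $l=k-1$ needs $k\geq 2$, although the corollary is stated for $k\geq 1$; this issue is inherited from the paper.)

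There is, however, a genuine gap in your strictness argument for $\gamma_n$ on the range $2k+3\leq n\leq 4k$. Your only tool for excluding a constituent of $\Gamma_n(E_{2k}\otimes E_{2k})$ from $\Gamma_n(\mathfrak{N}_{2k})$ is the first-row bound $\lambda_1\leq 2k-1$ from \cite{H}, so you need constituents $\lambda=(a+2,2^b,1^c)$ with $a+2=2k$, i.e.\ $a=2k-2$. But then $h_{12}(\lambda)=a+b+1\leq 2k$ forces $b\leq 1$, and the conditions of Theorem \ref{thm_DiD2} with $m=l=k$ pin down the size: for $b=0$, condition (i) reads $4k-2+c<4k$, giving $c=1$ and $n=2k+1$ (the partition $(2k,1)$); for $b=1$, $h_{12}=2k$ is even and condition (ii) forces $c=0$, giving $n=2k+2$ (your $(2k,2)$). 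So distinguishing constituents of this kind exist \emph{only} at $n=2k+1$ and $n=2k+2$, not ``for each relevant $n$'' up to $4k$ as you claim: for $2k+3\leq n\leq 4k$ every constituent of $\Gamma_n(E_{2k}\otimes E_{2k})$ has first row at most $2k-1$, and the bound from \cite{H} cannot rule it out of $\Gamma_n(\mathfrak{N}_{2k})$. The fact that $\gamma_n(E_{2k}\otimes E_{2k})>0$ for all $n\leq 4k$ guarantees the availability of \emph{some} modules, which is strictly weaker than the availability of \emph{distinguishing} ones. To cover the whole range one would need to exclude from $\Gamma_n(\mathfrak{N}_{2k})$ the partitions with $h_{12}(\lambda)\in\{2k-1,2k\}$ coming from conditions (i)--(ii) of Theorem \ref{thm_DiD2} (the partitions (v)--(vi) of the introduction), which requires a bound on the hook $h_{12}$ rather than on $\lambda_1$; neither your proposal nor, admittedly, the paper's one-line justification supplies such an argument explicitly.
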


	
\subsection{Submodules of $\Gamma_n(\mathfrak{N}_{2k})$ and $\Gamma_n(\mathfrak{N}_{2k+1})$ coming from $\Gamma_n(\mathfrak{N}_4)$} \label{subsec_Gamma5}
	In the paper \cite{SV}, the author describes the $S_n$-module structure of $\Gamma_n(\mathfrak{N}_4)$.
	In this subsection, we construct a class of irreducible $S_n$-submodules of $\Gamma_n(\mathfrak{N}_{2k})$, that come from the $S_n$-module decomposition of $\Gamma_n(\mathfrak{N}_4)$ given in \cite{SV}. 
	
	First, we define the following polynomials.
	\begin{align*}
		&g_1^{(2j-1)} = \sum_{\sigma \in S_{2j-3}}(-1)^{\sigma} [x_{\sigma(1)}, x_{\sigma(2)}]\cdots [x_{\sigma(2j-5)}, x_{\sigma(2j-4)}][x_{\sigma(2j-3)}, x_1, x_1]; \\
		&g_2^{(2j-1)} = \sum_{\sigma \in S_{2j-3}}(-1)^{\sigma} [x_{\sigma(1)}, x_{\sigma(2)}]\cdots [x_{\sigma(2j-5)}, x_{\sigma(2j-4)}][x_2, x_1, x_{\sigma(2j-3)}]; \\
		&g_3^{(2j-1)} = \sum_{\sigma \in S_{2j-2}}(-1)^{\sigma} [x_{\sigma(1)}, x_{\sigma(2)}]\cdots [x_{\sigma(2j-5)}, x_{\sigma(2j-4)}][x_{\sigma(2j-3)}, x_{\sigma(2j-2)}, x_1];
	\end{align*}
	
	\begin{align*}
		&g_1^{(2j)} = \sum_{\sigma \in S_{2j-2}}(-1)^{\sigma} [x_{\sigma(1)}, x_{\sigma(2)}]\cdots [x_{\sigma(2j-5)}, x_{\sigma(2j-4)}][x_{\sigma(2j-3)}, x_{\sigma(2j-2)}, x_1, x_1]; \\
		&g_2^{(2j)} = [x_1, x_2]s_{2j-2}(x_1, x_2, \dots, x_{2j-2}); \\
		&g_3^{(2j)} = \sum_{\sigma \in S_{2j-1}}(-1)^{\sigma} [x_{\sigma(1)}, x_{\sigma(2)}]\cdots [x_{\sigma(2j-5)}, x_{\sigma(2j-4)}][x_{\sigma(2j-3)}, x_{\sigma(2j-2)}, [x_{\sigma(2j-1)}, x_1]];
	\end{align*}
	
	Here, by $s_m(x_1, \dots, x_m)$ we denote the standard polynomial of degree $m$, i.e.,
	\[
	s_m(x_1, \dots, x_m) = \sum_{\sigma \in S_m} (-1)^{\sigma}x_{\sigma(1)} x_{\sigma(2)} \cdots x_{\sigma(m)}.
	\]
	
	When $m = 2j-2$ we have
	\[
	s_{2j-2}(x_1, \dots, x_{2j-2}) = \frac{1}{2^{j-1}}\sum_{\sigma \in S_{2j-2}} (-1)^{\sigma} [x_{\sigma(1)}, x_{\sigma(2)}] \cdots [x_{\sigma(2j-3)}, x_{\sigma(2j-2)}].
	\]
	
	Let $f_i^{(j)}$ denote the complete multilinearization of $g_i^{(j)}$ for $i =1, 2,3$.
	It was shown in \cite{SV} that each $f_i^{(j)}$ generates an irreducible $S_j$-submodule of $\Gamma_j(\mathfrak{N}_4)$. Hence, each $f_i^{(j)}$ generates an irreducible $S_j$-submodule of $\Gamma_j(\mathfrak{N}_p)$ for $p \geq 4$. 
	
	The first goal of this section is to prove the following proposition.
	
	\begin{proposition} \label{prop_Sn_modules}
	Let $p = 2k$. For all $0 \leq l \leq k-2$ and for $i = 1,2,3$, the linearization of $g_i^{(j)}[x_1, x_2]^{l}$ generates an irreducible $S_{j+2l}$-submodule of $\Gamma_{j+2l}(\mathfrak{N}_{2k})$. The submodule generated by $g_1^{(j)}[x_1, x_2]^l$ corresponds to the partition $(l+3,l+1, 1^{j-4})$, the submodule generated by $g_2^{(j)}[x_1, x_2]^l$ corresponds to the partition $(l+2,l+2, 1^{j-4})$, and the submodule generated by $g_3^{(j)}[x_1, x_2]^l$ corresponds to the partition $(l+2,l+1, 1^{j-3})$.
	\end{proposition}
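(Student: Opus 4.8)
The plan is to treat the three cases $i=1,2,3$ uniformly and to split the argument into (a) identifying the partition, (b) proving irreducibility, and (c) showing the generated module is non-zero. Write $\mu_1 = (3,1^{j-3})$, $\mu_2 = (2,2,1^{j-4})$, $\mu_3 = (2,1^{j-2})$ for the partitions realized by $f_1^{(j)}, f_2^{(j)}, f_3^{(j)}$ in $\Gamma_j(\mathfrak{N}_4)$ according to \cite{SV}; these are exactly the $l=0$ instances of the claimed $\lambda_i$. The guiding observation is that each commutator $[x_1,x_2]$ is antisymmetric in its two arguments, so it behaves like a column of height two, and multiplying by $[x_1,x_2]^l$ should append $l$ columns of height two to the Young diagram. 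Appending $l$ height-two columns to $\mu_1,\mu_2,\mu_3$ produces precisely $(l+3,l+1,1^{j-4})$, $(l+2,l+2,1^{j-4})$ and $(l+2,l+1,1^{j-3})$, so the combinatorics already matches and the real work is to justify this on the level of $S_n$-modules.

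For the partition and irreducibility I would argue as follows. Let $\tilde w_i$ denote the linearization of $g_i^{(j)}[x_1,x_2]^l$ and set $n=j+2l$. From \cite{SV} the polynomial $f_i^{(j)}$ is, modulo $I_5$, the image of a Young symmetrizer $e_{T_{\mu_i}}$ applied to a suitable product of variables; equivalently it is alternating in a family of disjoint variable sets whose cardinalities are the column lengths of $\mu_i$. The antisymmetrizations $\sum_{\sigma}(-1)^{\sigma}$ appearing in $g_i^{(j)}$ together with the $l$ antisymmetric factors of $[x_1,x_2]^l$ make $\tilde w_i$ alternating in a family of variable sets whose sizes are exactly the column lengths of $\lambda_i$. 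Standard representation theory of $S_n$ then shows that $\tilde w_i$ lies in the isotypic components $M(\nu)$ with $\nu$ dominating $\lambda_i$; the first-row bound $\nu_1\le 2k-1$ from \cite{H}, the shape restriction of Theorem \ref{thm_Berele}, and the fixed degree $n$ leave $\lambda_i$ as the only possibility. Concretely I would exhibit $\tilde w_i$ as $e_{T_{\lambda_i}}h \pmod{I_{2k+1}}$ for a tableau $T_{\lambda_i}$ of shape $\lambda_i$, so that $KS_n\,\tilde w_i$ is a homomorphic image of the irreducible module $M(\lambda_i)\cong KS_n\,e_{T_{\lambda_i}}$; once we know $\tilde w_i\neq 0$, this image is non-zero, hence equal to $M(\lambda_i)$ and in particular irreducible.

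It remains to prove $\tilde w_i\neq 0$ in $\Gamma_n(\mathfrak{N}_{2k})$, equivalently that $g_i^{(j)}[x_1,x_2]^l\notin I_{2k+1}$. Since $f_i^{(j)}\notin I_5$, there is an algebra $A\in\mathfrak{N}_4$ on which $f_i^{(j)}$ is not an identity. I consider $B=A\otimes\underbrace{E_2\otimes\cdots\otimes E_2}_{l}$. Iterating Proposition \ref{prop_addE2} (each time with an odd index, starting from the identity $[x_1,\dots,x_5]=0$ on $A$) gives that $B$ satisfies $[x_1,\dots,x_{5+2l}]=0$, so $B\in\mathfrak{N}_{4+2l}\subseteq\mathfrak{N}_{2k}$; here the hypothesis $l\le k-2$ is used exactly to guarantee $4+2l\le 2k$. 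I then evaluate $\tilde w_i$ by sending the $j$ variables coming from $f_i^{(j)}$ into $A\otimes 1^{\otimes l}$ (chosen so that $f_i^{(j)}$ is non-zero on $A$) and, for $m=1,\dots,l$, sending the two variables of the $m$-th factor of $[x_1,x_2]^l$ to the generators $e_1,e_2$ of the $m$-th copy of $E_2$ (and $1$ elsewhere). Because an element of the form $a\otimes 1$ commutes with an element of the form $1\otimes(\text{$E_2$-generator})$, every commutator mixing an $A$-variable with an $E_2$-variable, and every commutator mixing generators from two different copies of $E_2$, vanishes; hence in the top $E_2$-degree only the terms survive in which the two arguments of each surviving length-two commutator are the two generators of a single copy of $E_2$. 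Collecting these surviving terms, the value equals a non-zero scalar times the value of $f_i^{(j)}$ on the chosen elements of $A$, tensored with $\prod_{m=1}^{l}e_1^{(m)}e_2^{(m)}$, which is non-zero, just as in the evaluations used for Proposition \ref{prop_module} and for the products $E\otimes E_2^{\otimes k}$. This establishes $\tilde w_i\neq 0$.

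The main obstacle is the second step: turning the heuristic that ``$[x_1,x_2]^l$ appends $l$ height-two columns'' into a rigorous identification of the $S_n$-module, that is, controlling the linearization of the product $g_i^{(j)}[x_1,x_2]^l$ (in which the variables $x_1,x_2$ are shared between the two factors) and ruling out the appearance of any partition other than $\lambda_i$. I expect this to rely on combining the explicit generator description of $f_i^{(j)}$ from \cite{SV} with the column bound coming from the identity $[x_1,\dots,x_{2k+1}]=0$ and the first-row bound $\nu_1\le 2k-1$ of \cite{H}.
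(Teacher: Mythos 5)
Your overall frame agrees with what the paper leaves implicit: the polynomials $g_i^{(j)}[x_1,x_2]^l$ are highest-weight-type elements, so the cyclic $S_n$-module generated by the linearization is a homomorphic image of $M(\lambda_i)$ and it suffices to prove it is non-zero modulo $I_{2k+1}$; your reduction via $B=A\otimes\underbrace{E_2\otimes\cdots\otimes E_2}_{l}\in\mathfrak{N}_{4+2l}\subseteq\mathfrak{N}_{2k}$ is also sound (the paper instead verifies only the extremal case $l=k-2$ inside $E\otimes E_2\otimes\underbrace{E_2\otimes\cdots\otimes E_2}_{k-2}$, the smaller $l$ following since $I_{2k+1}\subseteq I_{2l+5}$). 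As an aside, your dominance argument is both dispensable and misdirected: alternation in column sets constrains the possible components from the side \emph{opposite} to the first-row bound of \cite{H}, so that pair of constraints does not single out $\lambda_i$; only the Young-symmetrizer identification does the job.

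The genuine gap is in your non-vanishing step. Because $x_1$ and $x_2$ are shared between $g_i^{(j)}$ and the factors $[x_1,x_2]^l$, the linearization sums over \emph{all} distributions of the copies of $x_1,x_2$ among the occurrence slots, and your filter (``commutators mixing an $A$-entry with an $E_2$-entry vanish'') does not kill the swapped distributions in which a matched pair $1\otimes e_1^{(m)},\,1\otimes e_2^{(m)}$ occupies a length-two commutator \emph{inside} $g_i^{(j)}$ (a factor $[x_{\sigma(s)},x_{\sigma(t)}]$ of $g_1^{(j)}$ or $g_3^{(j)}$, the leading $[x_1,x_2]$ or the standard polynomial in $g_2^{(2j)}$) while the displaced $A$-valued copies land in the $m$-th appended factor. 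Such terms are unmixed commutator by commutator, carry the same top $E_2$-part $\prod_m (e_1e_2)^{(m)}$, and are genuinely non-zero: for commuting sets one has, e.g., $s_4(a\otimes 1,\,a'\otimes 1,\,1\otimes e_1,\,1\otimes e_2)=2\,s_2(a,a')\otimes e_1e_2$, so an $E_2$-pair absorbed into $s_{2j-2}$ survives with its own sign and multiplicity. Hence your evaluation equals a \emph{signed sum} of several different $A$-evaluations tensored with $\prod_m(e_1e_2)^{(m)}$, not ``a non-zero scalar times $f_i^{(j)}$ evaluated on $A$,'' and nothing in your argument rules out cancellation in that sum. This is exactly the difficulty the paper's proof confronts head-on: it abandons the split substitution and instead uses five hand-tailored, entangled evaluations in $E\otimes E_2\otimes\underbrace{E_2\otimes\cdots\otimes E_2}_{k-2}$, in which $x_1$ and $x_2$ are sums whose summands couple the $E$ leg with the $E_2$ legs (e.g.\ $x_1=(e_1\otimes g_1+e_2\otimes g_2+e_3\otimes 1)\otimes 1^{\otimes(k-2)}+\cdots$ for $g_1^{(2j-1)}$, with different choices for $g_2^{(2j-1)}$, $g_2^{(2j)}$ and $g_3^{(2j)}$), and checks by direct computation that the value is $2^r\,e_1\cdots e_{2j-1}\otimes(g_1g_2)^{\otimes(k-1)}\neq 0$. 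To repair your proposal you would either have to carry out an analogous explicit bookkeeping of all surviving swapped terms and show they do not cancel the intended one, or adopt substitutions of the paper's entangled type.
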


\begin{proof}
	First we show that $g_i^{(2j-1)}[x_1, x_2]^{k-2}$ and $g_i^{(2j)}[x_1, x_2]^{k-2}$ 
	are not identities for $E\otimes E_2 \otimes \underbrace{E_2\otimes \cdots \otimes E_2}_{k-2}$ for $i = 1,2,3$.
	
	Let us set 
	\[
		x_1 = (e_1\otimes g_1 + e_2 \otimes g_2 + e_3 \otimes 1) \otimes \underbrace{1\otimes \cdots \otimes 1}_{k-2} + 1\otimes 1 \otimes (\sum_{i = 1}^{k-2} 1\otimes \dots \otimes 1 \otimes g_1 \otimes 1 \otimes \cdots \otimes 1),
		\]		
		where in the second summand $g_1$ appears at position $i$ in the tensor product. Similarly,
		\[
		x_2 = e_4\otimes 1 \otimes \underbrace{1\otimes \cdots \otimes 1}_{k -2} + 1\otimes 1 \otimes (\sum_{i = 1}^{k-2} 1\otimes \dots \otimes 1 \otimes g_2 \otimes 1 \otimes \cdots \otimes 1),
		\]
		where in the second summand $g_2$ appears at position $i$ in the tensor product. In addition,
		\begin{align*}
			&x_i = e_{i+2}\otimes 1\otimes \underbrace{1\otimes \cdots \otimes 1}_{k -2} \text{ for } i = 3, \dots, 2j-3;
		\end{align*}
	
	Then, by direct computation we check that for an appropriate value of $r$,
	\[
	g_1^{(2j-1)}[x_1, x_2]^{k-2} = 2^re_1e_2\cdots e_{2j-1}\otimes \underbrace{g_1g_2\otimes \cdots \otimes g_1g_2}_{k-1} \neq 0.
	\]
 Hence, $g_1^{(2j-1)}[x_1, x_2]^{k-2}$ is not an identity for $E\otimes\underbrace{E_2\otimes \cdots \otimes E_2}_{k-1}$.
 
 We use the same evaluation to show that $g_1^{(2j)}[x_1, x_2]^{k-2}$ and $g_3^{(2j-1)}[x_1, x_2]^{k-2}$ are not identities for $E\otimes\underbrace{E_2\otimes \cdots \otimes E_2}_{k-1}$ either.
	
	To check that $g_2^{(2j)}$ is not an identity for $E\otimes\underbrace{E_2\otimes \cdots \otimes E_2}_{k-1}$, we use the evaluation
	\begin{align*}
		&x_1 = e_1 \otimes  \underbrace{1\otimes \cdots \otimes 1}_{k -1} + 1 \otimes (\sum_{i = 1}^{k-1} 1\otimes \dots \otimes 1 \otimes g_1 \otimes 1 \otimes \cdots \otimes 1); \\
		&x_2 = e_2 \otimes  \underbrace{1\otimes \cdots \otimes 1}_{k -1} + 1 \otimes (\sum_{i = 1}^{k-1} 1\otimes \dots \otimes 1 \otimes g_2 \otimes 1 \otimes \cdots \otimes 1); \\
		&x_i = e_i \otimes  \underbrace{1\otimes \cdots \otimes 1}_{k -1} \text{ for } i \geq 3.
	\end{align*}

To check that $g_2^{(2j-1)}[x_1,x_2]^{k-2}$ is not an identity for $E\otimes\underbrace{E_2\otimes \cdots \otimes E_2}_{k-1}$, we use the evaluation
\begin{align*}
	&x_1 = (e_1 \otimes g_1 + e_3 \otimes 1) \otimes \underbrace{1\otimes \cdots \otimes 1}_{k -2} + 1 \otimes 1 \otimes (\sum_{i = 1}^{k-2} 1\otimes \dots \otimes 1 \otimes g_1 \otimes 1 \otimes \cdots \otimes 1); \\
	&x_2 = (e_2\otimes 1 + 1 \otimes g_2) \otimes  \underbrace{1\otimes \cdots \otimes 1}_{k -2} + 1 \otimes 1 \otimes (\sum_{i = 1}^{k-2} 1\otimes \dots \otimes 1 \otimes g_2 \otimes 1 \otimes \cdots \otimes 1); \\
	&x_i = e_{i+1} \otimes 1 \otimes \underbrace{1\otimes \cdots \otimes 1}_{k -2} \text{ for } i \geq 3.
\end{align*}

Finally, to check that $g_3^{(2j)}[x_1,x_2]^{k-2}$ is not an identity for $E\otimes\underbrace{E_2\otimes \cdots \otimes E_2}_{k-1}$, we use the evaluation
\begin{align*}
	&x_1 = (e_1 \otimes g_1 + e_3 \otimes 1) \otimes \underbrace{1\otimes \cdots \otimes 1}_{k -2} + 1 \otimes 1 \otimes (\sum_{i = 1}^{k-2} 1\otimes \dots \otimes 1 \otimes g_1 \otimes 1 \otimes \cdots \otimes 1); \\
	&x_2 = (e_2\otimes 1) \otimes  \underbrace{1\otimes \cdots \otimes 1}_{k -2} + 1 \otimes 1 \otimes (\sum_{i = 1}^{k-2} 1\otimes \dots \otimes 1 \otimes g_2 \otimes 1 \otimes \cdots \otimes 1); \\
	&x_i = e_{i+1} \otimes 1 \otimes \underbrace{1\otimes \cdots \otimes 1}_{k -2} \text{ for } i \geq 3.
\end{align*}

In this way we proved that $g_i^{(j)}[x_1, x_2]^{l}$ for $i = 1,2,3$ and for all $0 \leq l \leq k-2$ are not identities for $E\otimes \underbrace{E_2\otimes \cdots \otimes E_2}_{k-1}$. Therefore, by Corollary \ref{coro_tensorByE2}, the linearizations of $g_i^{(j)}[x_1, x_2]^{l}$ are not elements of $I_{2k+1}$ and thus they are nonzero elements in $\Gamma_{j+2l}(\mathfrak{N}_{2k})$. 

The polynomial $g_1^{(j)}[x_1, x_2]^{l}$ belongs to $K\left\langle x_1, \dots, x_m \right\rangle$ for any $m \geq j+2l$. The group $\GL(m)$ acts naturally on $K\left\langle x_1, \dots, x_m \right\rangle$ and $g_1^{(j)}[x_1, x_2]^{l}$ is a highest weight vector for this action with highest weight $(l+3, l+1, 1^{j-4})$. Hence, from \cite{D2}, Chapter 12 (see also the beginning of \cite{DiD}) it follows that the linearization of $g_1^{(j)}[x_1, x_2]^{l}$ generates an irreducible $S_{j+2l}$-submodule of $\Gamma_{j+2l}(\mathfrak{N}_{2k})$ corresponding to the partition $(l+3, l+1, 1^{j-4})$.

The proof for the polynomials $g_i^{(j)}[x_1, x_2]^{l}$ with $i=2,3$ is analogous.
\end{proof}	
	
	\begin{remark}
		Notice that when $n \leq 2k$, then $\Gamma_{n}(\mathfrak{N}_{2k}) = \Gamma_{n}$. Similarly, when $n \leq  2k+1$, $\Gamma_n(\mathfrak{N}_{2k+1}) = \Gamma_n$. 
	\end{remark}
	

Proposition \ref{prop_Sn_modules} gives the following list of partitions such that the corresponding irreducible $S_n$-module appears in the decomposition of $\Gamma_n(\mathfrak{N}_{2k})$: 
	\[
	(l+3,l+1, 1^{n-2l-4}), (l+2,l+2, 1^{n-2l-4}), (l+2,l+1, 1^{n-2l-3}), 
	\]
	where $1 \leq l \leq k-2$. (For $l=0$ the corresponding modules appear already in Theorem \ref{thm_DiD}, that is why we exclude the case $l=0$.)

Then next goal is, using Proposition \ref{prop_Sn_modules}, to obtain new lower bounds for the dimensions of $\Gamma_n(\mathfrak{N}_{2k})$, $\Gamma_n(\mathfrak{N}_{2k+1})$, $P_n(\mathfrak{N}_{2k})$, and $P_n(\mathfrak{N}_{2k+1})$. 
	
	Let us denote by $M_{i,l}^{(j+2l)}$ the irreducible $S_{j+2l}$-module generated by the linearization of $g_i^{(j)}[x_1, x_2]^l$. 
	In other words, if we set $n = j+2l$, then $M_{i,l}^{(n)}$ is the irreducible $S_{n}$-module generated by the linearization of $g_i^{(n-2l)}[x_1, x_2]^l$. 
	Using the hook formula, we obtain the following expressions.
	
	\begin{align*}
		&	\operatorname{dim} M_{1, l}^{(n)} = \frac{3n!}{(n-2l-4)!(n-l)(n-l-3)(l+3)!l!}\\
		&	\operatorname{dim} M_{2, l}^{(n)} = \frac{n!}{(n-2l-4)!(n-l-1)(n-l-2)(l+2)!(l+1)!}\\
		&	\operatorname{dim} M_{3, l}^{(n)} = \frac{2n!}{(n-2l-3)!(n-l)(n-l-2)(l+2)!l!}\\
	\end{align*}	
	
	Therefore, for a fixed $l$ the dimensions of $M_{1, l}^{(n)}$ and $M_{2, l}^{(n)}$ are polynomials of $n$ of degree $2l+2$ and the dimension of $M_{3, l}^{(n)}$ is a polynomial of $n$ of degree $2l+1$. 

	The above results imply the following lower bound for $ \gamma_n(\mathfrak{N}_{2k})$ and $\gamma_n(\mathfrak{N}_{2k+1})$.
	\begin{corollary}\label{coro_dim_Gamma}
		For $n \geq 2k$,
		\begin{align*}
			&\gamma_{n}(\mathfrak{N}_{2k+1}) \geq \gamma_{n}(\mathfrak{N}_{2k}) \geq p_k(n),
	\end{align*}
		where, $p_k(n)$ is a polynomial of $n$ with rational coefficients of degree $2k-2$. In particular,
		\[
		p_k(n) = \sum_{l=0}^{k-2} (\dim M_{1, l}^{(n)} + \dim M_{2, l}^{(n)} + \dim M_{3, l}^{(n)}) + \varepsilon(n), 
		\]	
where $\varepsilon(n) = 1$ if $n$ is even and $\varepsilon(n) = 0$ otherwise.
		The leading coefficient of $p_k(n)$ (i.e., the coefficient in front of $n^{2k-2}$) is equal to
	\[
\frac{C_k}{(2k-2)!},	
		\]
		where $C_k = \frac{1}{k+1}\binom{2k}{k}$ denotes the $k$-th Catalan number.
	\end{corollary}
\begin{proof}
	The polynomial $p_k(n)$ is obtained by adding the dimensions of the $S_n$-modules that appear in the decomposition of $\Gamma_n(\mathfrak{N}_{2k})$. First, we will check that $p_k(n)$ is well-defined.
	
The polynomials $g_1^{(2j)}$ and $g_2^{(2j)}$ are well-defined for $j \geq 2$, and the polynomials $g_1^{(2j-1)}$ and $g_2^{(2j-1)}$ are well-defined for $j \geq 3$. 
Hence, when $n \geq 2k$, the polynomials $g_i^{(n-2l)}[x_1, x_2]^l$ (and thus also the polynomials $M_{i,l}^{(n)}$) are well-defined for all $0 \leq l \leq k-2$.

When $n$ is even, there is one more $S_n$-module that appears in the $S_n$-decomposition of $\Gamma_n(\mathfrak{N}_4)$, namely the one-dimensional module generated by $s_{n}(x_1, \dots, x_{n})$ (see \cite{SV}). This explains the term $\varepsilon(n)$ in the formula for $p_k(n)$.

Finally, for the leading coefficient of $p_k(n)$ we obtain the following:
		\[
		\frac{2(2k-1)}{(k+1)!(k-1)!} = \frac{1}{(2k-2)!(k+1)}\binom{2k}{k} = \frac{C_k}{(2k-2)!}.	
		\]
\end{proof}	


Notice that when $k=2$ the bound for $\gamma_n(\mathfrak{N}_{2k})$ and $\gamma_n(\mathfrak{N}_{2k+1})$ obtained in Corollary \ref{coro_dim_Gamma} is equal to the respective bound from Corollary \ref{coro_dim_FromE} (i) (which is natural because in both cases we just consider $\gamma_n(E\otimes E_2))$. Furthermore, when $k \geq 4$, we have that the polynomials $g_i^{(j)}[x_1, x_2]^{k-2}$ correspond to partitions that do not appear in the decomposition of $E \otimes E_{2k-2}$. Therefore, for $k \geq 4$ we can add the lower bounds found in Corollary \ref{coro_dim_FromE} (i) and Corollary \ref{coro_dim_Gamma} to obtain a better lower bound for $\gamma_n(\mathfrak{N}_{2k})$ and $\gamma_n(\mathfrak{N}_{2k+1})$. When $k=3$, the polynomial $g_2^{(j)}[x_1, x_2]^{k-2}$ corresponds to a partition that does not appear in the decomposition of $E \otimes E_{2k-2}$. Therefore, in this case we can add the leading coefficient of the polynomial $\dim M_{2,k-2}^{(n)}$ to the bound found in Corollary \ref{coro_dim_FromE} (i). These remarks lead to the following statement.

	\begin{corollary} \label{coro_final_bound_Gamma}
	Let $n \geq 2k$. Then,
	
	\[
	\gamma_n(\mathfrak{N}_{2k+1}) \geq \gamma_n(\mathfrak{N}_{2k}) \geq p_k(n),
	\]
	
	where $p_k(n)$ is a polynomial in $n$ with rational coefficients of degree $2k-2$.
	
	When $k \geq 4$ the leading coefficient of $p_k(n)$ is equal to
	\begin{align*}
		\frac{2^{2k-3} + C_k}{(2k-2)!},
	\end{align*}
	where again $C_k$ is the $k$-th Catalan number.
	
	When $k = 3$, the leading coefficient of $p_k(n)$ is equal to
		\begin{align*}
		\frac{2^{2k-3} + C_{k-1}}{(2k-2)!}.
	\end{align*}
	
	For $n \leq 2k$, it holds that $\Gamma_n(\mathfrak{N}_{2k+1}) = \Gamma_n(\mathfrak{N}_{2k}) = \Gamma_n$.
	
	Furthermore, for $2k+1 \leq n \leq 4k$ we have that $\gamma_n(\mathfrak{N}_{2k+1})$ is strictly bigger than $\gamma_n(\mathfrak{N}_{2k})$.
\end{corollary}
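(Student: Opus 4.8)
The plan is to combine the two lower bounds already at our disposal: the bound $p(n)$ of Corollary \ref{coro_dim_FromE}(i), coming from the constituents $M(a+2,2^b,1^c)$ of $\Gamma_n(E\otimes E_{2k-2})\subseteq\Gamma_n(\mathfrak{N}_{2k})$, and the bound $A_k(n)$ (for odd degree) or $B_k(n)$ (for even degree) of Corollary \ref{coro_dim_Gamma}, coming from the submodules $M_{i,l}^{(j)}$. Both are polynomials of degree $2k-2$, and the point is that they count \emph{distinct} irreducible constituents, so their dimensions may be added. To make this precise, I would first isolate the constituents responsible for the leading ($n^{2k-2}$) term of $A_k(n)$ and $B_k(n)$: by the degree count recorded in Corollary \ref{coro_dim_Gamma}, these come only from the index $l=k-2$, that is, from $M_{1,k-2}$, $M_{2,k-2}$, $M_{3,k-2}$, corresponding respectively to the partitions $(k+1,k-1,1^{c})$, $(k,k,1^{c})$ and $(k,k-1,1^{c})$.

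The key combinatorial step, valid precisely because $k\geq 4$, is that each of these partitions has second part at least $k-1\geq 3$, whereas every partition of the form $(a+2,2^b,1^c)$ with $b+c>0$ has second part at most $2$. Hence none of the $l=k-2$ modules occurs among the constituents counted by $p(n)$, and I would conclude that the constituents of $\Gamma_n(E\otimes E_{2k-2})$ together with $M_{1,k-2}$, $M_{2,k-2}$, $M_{3,k-2}$ form a family of pairwise distinct irreducible submodules of $\Gamma_n(\mathfrak{N}_{2k})$. Adding their dimensions gives a valid lower bound $\gamma_n(\mathfrak{N}_{2k})\geq P(n)$, where $P(n)$ is obtained from $p(n)$ by adding the $l=k-2$ contribution; its degree is $2k-2$ and its leading coefficient is the sum $\frac{2^{2k-3}}{(2k-2)!}+\frac{2^{2k-2}}{(2k-2)!}C_k=\frac{2^{2k-3}}{(2k-2)!}(1+2C_k)$. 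Since $\mathfrak{N}_{2k}\subset\mathfrak{N}_{2k+1}$ yields $\gamma_n(\mathfrak{N}_{2k})\leq\gamma_n(\mathfrak{N}_{2k+1})$, the same bound holds for $\mathfrak{N}_{2k+1}$.

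The remaining two assertions I would dispatch by citing earlier results. For $n\leq 2k$ the equality $\Gamma_n(\mathfrak{N}_{2k+1})=\Gamma_n(\mathfrak{N}_{2k})=\Gamma_n$ follows because the T-ideal $I_{2k+1}$ is generated by an element of degree $2k+1$ and hence contains no nonzero polynomial of degree at most $2k$; thus $\Gamma_n\cap I_{2k+1}=0$ in this range, and the inclusions $\Gamma_n(\mathfrak{N}_{2k})\subseteq\Gamma_n(\mathfrak{N}_{2k+1})\subseteq\Gamma_n$ force all three spaces to coincide. For $2k+1\leq n\leq 4k$ the strict inequality $\gamma_n(\mathfrak{N}_{2k+1})>\gamma_n(\mathfrak{N}_{2k})$ is exactly the strictness already established at the end of Corollary \ref{coro_dim_FromE} via Theorem \ref{thm_DiD2} applied to $E_{2k}\otimes E_{2k}$, whose $S_n$-decomposition in this range contains constituents absent from $\Gamma_n(\mathfrak{N}_{2k})$; the lower cutoff $2k+1$ is the one just supplied by the equality for $n\leq 2k$.

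The main obstacle will be the bookkeeping needed to guarantee that adding the two bounds does not over-count. For the small indices $l\in\{0,1\}$ the partitions produced by the $M_{i,l}$ genuinely coincide with partitions of the form $(a+2,2^b,1^c)$, so one cannot naively add $p(n)$ and the \emph{full} polynomial $A_k(n)$ or $B_k(n)$. The care required is to retain only the genuinely new constituents; since the leading term of $A_k(n)$ and $B_k(n)$ is supplied entirely by the non-overlapping index $l=k-2$, any overlaps occur only in strictly lower-order terms and therefore affect neither the degree nor the leading coefficient of $P(n)$, which is all that the statement claims.
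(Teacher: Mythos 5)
Your proposal is correct and takes essentially the same route as the paper: the paper's justification (the paragraph preceding the corollary) is precisely that for $k\geq 4$ the partitions $(k+1,k-1,1^c)$, $(k,k,1^c)$, $(k,k-1,1^c)$ attached to $g_i^{(j)}[x_1,x_2]^{k-2}$ have second row at least $3$ and so cannot be of the form $(a+2,2^b,1^c)$ occurring in $\Gamma_n(E\otimes E_{2k-2})$, whence the bounds of Corollary \ref{coro_dim_FromE}(i) and Corollary \ref{coro_dim_Gamma} may be added, with the $n\leq 2k$ equality coming from the absence of elements of degree at most $2k$ in $I_{2k+1}$ and the strictness for $2k+1\leq n\leq 4k$ from Theorem \ref{thm_DiD2} applied to $E_{2k}\otimes E_{2k}$, exactly as you argue. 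Your explicit bookkeeping for the overlapping small indices $l\in\{0,1\}$ (retaining only the non-overlapping $l=k-2$ constituents, which alone carry the leading term) is a point the paper glosses over with ``we can add the lower bounds,'' so your write-up is, if anything, slightly more careful than the original.
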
	

Next, using Corollary \ref{coro_final_bound_Gamma} and the methods from \cite{DiD}, we obtain the following final lower bound for the codimensions $c_n(\mathfrak{N}_{2k})$ and $c_n(\mathfrak{N}_{2k+1})$.

\begin{corollary} \label{coro_final_bound_codim}
	Let $n \geq 2k+1$. Then,
	\[
	c_n(\mathfrak{N}_{2k+1}) \geq c_n(\mathfrak{N}_{2k}) \geq 2^n R(n) + S(n),
	\]
	where $R(n), S(n) \in \QQ[n]$, $\deg R(n) = 2k-2$ and 
	$\deg S(n) \leq 2k-1$.
	When $k \geq 4$, the leading coefficient of $R(n)$ is
	\[
	\frac{2^{2k-3}+ C_k}{2^{2k-2}(2k-2)!},
	\]
	where again $C_k$ is the $k$-th Catalan number.
	
	When $k = 3$, the leading coefficient of $R(n)$ is
	\[
	\frac{2^{2k-3}+ C_{k-1}}{2^{2k-2}(2k-2)!}.
	\]

	For $n \leq 2k$, we have that $c_n(\mathfrak{N}_{2k+1}) = c_n(\mathfrak{N}_{2k}) = \dim P_n = n!$.
	
	Furthermore, for $2k+1 \leq n \leq 4k$ we have that $c_n(\mathfrak{N}_{2k+1})$ is strictly bigger than $c_n(\mathfrak{N}_{2k})$.
\end{corollary}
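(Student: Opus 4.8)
The plan is to mirror the proof of Corollary~\ref{coro_codim} verbatim, but feeding in the stronger lower bound on the proper codimension supplied by Corollary~\ref{coro_final_bound_Gamma} instead of the weaker one used in Corollary~\ref{coro_codim} itself. Concretely, I would start from $\gamma_n(\mathfrak{N}_{2k}) \geq P(n)$, valid for $n \geq 2k$, where $P(n) \in \QQ[n]$ has degree $2k-2$ and leading coefficient $\frac{2^{2k-3}}{(2k-2)!}(1+2C_k)$. Exactly as before, for $n \leq 2k-1$ I would replace $P(n)$ by $P(n) + \nu_n$ with $\nu_n = \dim \Gamma_n - P(n)$, producing a function $P'(n)$ that is a genuine lower bound for $\gamma_n(\mathfrak{N}_{2k})$ for every $n$ and that differs from the polynomial $P$ only through the finitely supported correction $\theta(t) = \sum_{n=0}^{2k-1}\nu_n t^n$.

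Next I would expand $P$ in the basis $\binom{n+m}{m}$, $0 \leq m \leq 2k-2$, of $\QQ[n]$, so that its top coefficient is $a_{2k-2} = (2k-2)! \cdot \frac{2^{2k-3}}{(2k-2)!}(1+2C_k) = 2^{2k-3}(1+2C_k)$. Forming the proper-codimension series, applying the substitution $t \mapsto t/(1-t)$ together with the prefactor $\frac{1}{1-t}$ from Proposition~\ref{prop_codim_propcodim}(ii), and using the same identity $\frac{(1-t)^m}{(1-2t)^{m+1}} = \frac{1}{2^m(1-2t)^{m+1}} + \rho_m\!\left(\tfrac{1}{1-2t}\right)$ as in Corollary~\ref{coro_codim}, I would rewrite the series as a combination of the functions $(1-2t)^{-(m+1)}$ and $(1-t)^{-(m+1)}$. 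Extracting coefficients via $\sum_n \binom{n+m}{m} t^n = (1-t)^{-(m+1)}$ then produces $c_n(\mathfrak{N}_{2k}) \geq 2^n R(n) + S(n)$ with $\deg R = 2k-2$ and $\deg S \leq 2k-1$, the transformation simply dividing the top coefficient by $2^{2k-2}$ and hence giving $R$ the leading coefficient $\frac{a_{2k-2}}{2^{2k-2}(2k-2)!} = \frac{1+2C_k}{2(2k-2)!}$. The inequality $c_n(\mathfrak{N}_{2k+1}) \geq c_n(\mathfrak{N}_{2k})$ is just the varietal containment $\mathfrak{N}_{2k} \subseteq \mathfrak{N}_{2k+1}$, which yields $\gamma_l(\mathfrak{N}_{2k+1}) \geq \gamma_l(\mathfrak{N}_{2k})$ for all $l$ and hence the same for $c_n$ through Proposition~\ref{prop_codim_propcodim}(i).

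For the range $n \leq 2k$, I would argue that the defining identity of $\mathfrak{N}_{2k}$ (resp.\ $\mathfrak{N}_{2k+1}$) is a proper polynomial of degree $2k+1$ (resp.\ $2k+2$), so it cannot affect multilinear polynomials of degree at most $2k$; by the middle assertion of Corollary~\ref{coro_final_bound_Gamma} we have $\Gamma_l(\mathfrak{N}_{2k+1}) = \Gamma_l(\mathfrak{N}_{2k}) = \Gamma_l$ for all $l \leq 2k$, so Proposition~\ref{prop_codim_propcodim}(i) gives $c_n = \sum_{l=0}^n \binom{n}{l} \gamma_l = \dim P_n = n!$. Finally, for $2k+1 \leq n \leq 4k$ the strict inequality $\gamma_n(\mathfrak{N}_{2k+1}) > \gamma_n(\mathfrak{N}_{2k})$ from Corollary~\ref{coro_final_bound_Gamma} enters $c_n = \sum_{l=0}^n \binom{n}{l}\gamma_l$ through the term $l=n$, forcing $c_n(\mathfrak{N}_{2k+1}) > c_n(\mathfrak{N}_{2k})$.

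I expect no genuine obstacle here, since every analytic ingredient was already assembled in the proof of Corollary~\ref{coro_codim}; the only point demanding care is verifying that the substitution $t \mapsto t/(1-t)$ propagates the improved leading coefficient correctly, i.e.\ that it simply divides the top coefficient by $2^{2k-2}$, exactly as it turned $\frac{2^{2k-2}C_k}{(2k-2)!}$ into $\frac{C_k}{(2k-2)!}$ in Corollary~\ref{coro_codim}.
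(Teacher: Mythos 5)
Your proposal is correct and is essentially the paper's own (implicit) derivation: the paper obtains this corollary by adding the bounds of Corollary \ref{coro_dim_FromE}(ii) and Corollary \ref{coro_codim}, which—because the binomial transform of Proposition \ref{prop_codim_propcodim} is linear and monotone in the proper codimensions—is exactly the computation you perform by feeding the combined bound $P(n)$ of Corollary \ref{coro_final_bound_Gamma} into the generating-function argument of Corollary \ref{coro_codim}. Your leading coefficient $a_{2k-2}/\bigl(2^{2k-2}(2k-2)!\bigr) = \frac{1+2C_k}{2(2k-2)!}$, the identification $c_n = \dim P_n = n!$ for $n \leq 2k$ via $\Gamma_l(\mathfrak{N}_{2k}) = \Gamma_l$, and the strict inequality for $2k+1 \leq n \leq 4k$ extracted from the $l = n$ term of $c_n = \sum_{l=0}^{n}\binom{n}{l}\gamma_l$ all check out.
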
	

\begin{proof}
	The proof is similar to the proof of Theorem 3.5 (ii) from \cite{DiD}.
	By Corollary \ref{coro_final_bound_Gamma} we have that for $n \geq 2k$,
		\[
		\gamma_n(\mathfrak{N}_{2k}) \geq p_k(n),
		\]
		where $p_k(n) \in \QQ[n]$, $\deg p_k(n) = 2k-2$ and the leading coefficient of $p_k(n)$ is denoted by $\alpha_n$. When $k \geq 4$, we have $\alpha_n = \frac{2^{2k-3} + C_k}{(2k-2)!}$ and when $k = 3$, then $\alpha_n = \frac{2^{2k-3} + C_{k-1}}{(2k-2)!}$.
		
For $n \leq 2k-1$, we set
		\[
		\gamma_n(\mathfrak{N}_{2k}) = p_k(n) + (\operatorname{dim} \Gamma_n - p_k(n)),
		\]
		where $p_k(n)$ is the same polynomial as above.
We set $\nu_n = \operatorname{dim} \Gamma_n - p_k(n)$ and for each $n$ we have that $\nu_n \in \QQ$. 

The polynomials $\binom{n+m}{m}$ for $m = 0,1,2 \dots$ form a basis for $\QQ[n]$. Therefore, for some coefficients $a_0, \dots, a_{2k-2} \in \QQ$ we have that
\[
p_k(n) = \sum_{m = 0}^{2k-2} a_m \binom{n+m}{m}.
\]

Notice that the leading coefficient of $\binom{n+m}{m}$ for each $m$ is equal to $\frac{1}{m!}$. Hence, for the coefficient $a_{2k-2}$ we obtain the following expression
\[
a_{2k-2} = \alpha_n(2k-2)!.
\]

We define the function $p'_k(n)$ in the following way:
\begin{align*}
	p'_k(n) = \begin{cases} 
		p_k(n) & \text{ if } n \geq 2k \\
		p_k(n) + \nu_n & \text{ otherwise} .
	\end{cases}	 
\end{align*}

Therefore, for all $n$ it holds that $\gamma_n(\mathfrak{N}_{2k}) \geq p'_k(n)$.

The next step is to consider the series
\begin{align*}
p_k(t) = \sum_{n \geq 0} p'_k(n) t^n = \sum_{n \geq 0} \sum_{m = 0}^{2k-2} a_m \binom{n+m}{m}t^n + \sum_{n = 0}^{2k-1} \nu_n t^n.
\end{align*}

Now we use the property that
\begin{align} \label{eq_binom}
	\sum_{n\geq 0} \binom{n+m}{m}t^n = \frac{1}{(1-t)^{m+1}}.
\end{align}	 
Hence,
\[
p_k(t) = \sum_{m=0}^{2k-2}\frac{a_m}{(1-t)^{m+1}} + \theta_k(t),
\]
where $\theta_k(t) = \sum_{n = 0}^{2k-1} \nu_n t^n$ and $\theta_k(t)$ is a polynomial in $t$ of degree at most $2k-1$.

Next, we define the series
\[
q_k(t) = \sum_{n \geq 0} q_k(n) t^n,
\]
where 
\[
q_k(n) = \sum_{l = 0}^n \binom{n}{l}p'_k(n).
\]

By Proposition \ref{prop_codim_propcodim}, it follows that for each $n$
\[
c_n(\mathfrak{N}_{2k}) \geq q_k(n).
\]

To compute the numbers $q_k(n)$ we notice that
\[
q_k(t) = \frac{1}{1-t}p_k\left(\frac{t}{1-t}\right).
\]
Therefore,
\begin{align*}
	q_k(t) = \sum_{m=0}^{2k-2} \frac{a_m (1-t)^{m}}{(1-2t)^{m+1}} + \frac{1}{1-t}\theta_k\left(\frac{t}{1-t}\right).
\end{align*}

First, we simplify the expression
\begin{align*}
\frac{(1-t)^m}{(1-2t)^{m+1}} = \frac{(1-(1-2t))^m}{2^m(1-2t)^{m+1}} = \frac{1}{2^m (1-2t)^{m+1}} + \rho_m\left(\frac{1}{1-2t}\right),
\end{align*}
where $\rho_m(t) \in \QQ[t]$ and $\deg \rho_m(t) \leq m$.

We also set 
\[
\frac{1}{1-t}\theta_k\left(\frac{t}{1-t}\right) = \tau_k\left(\frac{1}{1-t}\right),
\]
where $\tau_k(t) \in \QQ[t]$ and $\deg \tau_k(t) \leq 2k$. We also notice that $\tau_k(t)$ has no free term.

Therefore,
\begin{align*}
	q_k(t) = \frac{a_{2k-2}}{2^{2k-2}(1-2t)^{2k-1}} + \sum_{m = 0}^{2k-3}\frac{a_m'}{(1-2t)^{m+1}} + \tau_k\left(\frac{1}{1-t}\right).
\end{align*}

Here, the coefficient $a_m'$ comes from $a_m$ and $\rho_m\left(\frac{1}{1-2t}\right)$.

We also set $\tau_k(t) = \sum_{m = 1}^{2k} \tau_{km}t^m$ for some coefficients $\tau_{km} \in \QQ$. Hence,
\[
\tau_k\left(\frac{1}{1-t}\right) = \sum_{m=1}^{2k} \frac{\tau_{km}}{(1-t)^m} = \sum_{m=0}^{2k-1} \frac{\tau_{k,m+1}}{(1-t)^{m+1}}.
\]

Now, we use again property (\ref{eq_binom}) and obtain
\begin{align*}
&q_k(t) = \sum_{n \geq 0} \left(\frac{a_{2k-2}}{2^{2k-2}} \binom{n+2k-2}{2k-2} + \sum_{m = 0}^{2k-3} a'_m \binom{n+m}{m}\right) 2^n t^n + \\
& \sum_{m = 0}^{2k-1} \tau_{k,m+1}\binom{n+m}{m}t^n.
\end{align*}

The last equation implies that
\[
q_k(n) = \left(\frac{a_{2k-2}}{2^{2k-2}} \binom{n+2k-2}{2k-2} + \sum_{m = 0}^{2k-3} a'_m \binom{n+m}{m}\right) 2^n + \sum_{m = 0}^{2k-1} \tau_{k,m+1}\binom{n+m}{m}.
\]

Hence, we have that $c_n(\mathfrak{N}_{2k+1})\geq c_n(\mathfrak{N}_{2k}) \geq q_k(n) = R(n)2^n + S(n)$, where $\deg R(n) = 2k-2$ and $\deg S(n) \leq 2k-1$ and the leading coefficient of $R(n)$ is equal to
\[
\frac{a_{2k-2}}{2^{2k-2}(2k-2)!}.
\]

When $k \geq 4$, we obtain
\[
\frac{a_{2k-2}}{2^{2k-2}(2k-2)!} = 	\frac{2^{2k-3}+ C_k}{2^{2k-2}(2k-2)!}.
\]

Similarly, when $k = 3$, we obtain
\[
\frac{a_{2k-2}}{2^{2k-2}(2k-2)!} = 	\frac{2^{2k-3}+ C_{k-1}}{2^{2k-2}(2k-2)!}.
\]
\end{proof}

\section*{Acknowledgements}
I am grateful to Vesselin Drensky for the fruitful discussions that we had during my work on this paper.


\begin{thebibliography} {99}
		
		
		\bibitem{B}
		A. Berele,
		{\it Eventual Arm and Leg Widths in Cocharacters of P.I. algebras},
		Proceedings of the AMS {\bf 134} (2005), No. 3, 665-671.
		
		\bibitem{DeK}
		G. Deryabina, A. Krasilnikov,
		{\it Products of commutators in a Lie nilpotent associative algebra},
		Journal of Algebra {\bf 469} (2017), 84-95.
		
		\bibitem{DeK2}
		G. Deryabina, A. Krasilnikov,
		{\it On some products of commutators in an associative ring},
		International Journal of Algebra and Computation, 29:2 (2019), 333-341.
		
		\bibitem{DiD}
		O. M. di Vincenzo, V. Drensky,
		{\it Polynomial identities for tensor products of Grassmann algebras},
		Mathematica Pannonica {\bf 4/2} (1993), 249-272.
		
		\bibitem{D}
		V. Drensky,
		{\it Codimensions of T-ideals and Hilbert Series of Relatively Free Algebras},
		Journal of Algebra {\bf 91} (1984), No. 1, 1-17.
		
		\bibitem{D2}
		V. Drensky,
		{\it Free Algebras and PI-Algebras: Graduate Course in Algebra},
		Springer, Singapore, 1999.
		
		\bibitem{GiMP}
		A. Giambruno, D. La Mattina, V. Petrogradsky,
		{\it Matrix Algebras of Polynomial Codimension Growth},
		Israel Journal of Mathematics, {\bf 158} (2007), 367-378.
		
		\bibitem{GrP}
		A. V. Grishin, S. V. Pchelintsev,
		{\it On centres of relatively free associative algebras with a Lie nilpotency identity},
		Sb. Math. {\bf 206} (2015), No. 11, 1610-1627.
		
		\bibitem{GL}
		N. Gupta, F. Levin,
		{\it On the Lie ideals of a ring},
		J. Algebra {\bf 81} (1983), No. 1, 225-231.
		
		\bibitem{H}
		E. Hristova,
		{\it On the $\mathrm{GL}(n)$-module structure of Lie nilpotent associative relatively free algebras}, Journal of Algebra {\bf 626} (2023), 39-55.
		
		\bibitem{HdM}
		E. Hristova, T. C. de Mello,
		{\it Identities of relatively free algebras of Lie nilpotent associative algebras}, preprint, arXiv:2503.22664, 2025.
		
		\bibitem{J}
		S. A. Jennings,
		{\it On Rings Whose Associated Lie Rings are Nilpotent},
		Bull. Amer. Math. Soc. {\bf 53} (1947), 593-597.
		
		
		\bibitem{L}
		V. N. Latyshev,
		{\it On the finiteness of the number of generators of a $T$-ideal with an element
			$[x_1, x_2, x_3, x_4]$},
		Sibirskii Matematicheskii Zhurnal, {\bf 6} (1965), 1432-1434 (in Russian).
		
		\bibitem{MPR}
		S. P. Mishchenko, V. M. Petrogradsky, A.Regev.
		{\it Characterization of non-matrix varieties of associative algebras},
		Israel J. Math. {\bf 182} (2011), 337-348.
		
		
		\bibitem{SV}
		A. N. Stoyanova-Venkova,
		{\it The Lattice of Varieties of Associative Algebras Defined by a Commutator of Length Five},
		Plovdiv. Univ. Nauchn. Trud. {\bf 22} No. 1 (1984), 13-44 (in Bulgarian).
		
		\bibitem{V}
		I. B. Volichenko,
		{\it The T-ideal generated by the element $[X_1,X_2, x_3, x_4]$},
		Preprint No. 22 (54), 1978, AN BSSR, Institute of Math. (in Russian)
		
	\end{thebibliography}
\end{document}